\newcommand{\globalcolor}[1]{%
  \color{#1}\global\let\default@color\current@color
}
\definecolor{blush}{rgb}{0.87, 0.36, 0.51}
	\definecolor{brightcerulean}{rgb}{0.11, 0.67, 0.84}
	\definecolor{greenryb}{rgb}{0.4, 0.69, 0.2}
\newif\ifdark
\definecolor{darkred}{rgb}{0.9,0.2,0.2}
\definecolor{darkblue}{rgb}{0.7,0.3,1}
\definecolor{darkgreen}{rgb}{0.1,0.9,0.1}
\definecolor{franck}{rgb}{0,0.8,1}
\definecolor{pagebackground}{rgb}{.15,.21,.18}
\definecolor{pageforeground}{rgb}{.84,.84,.85}
\definecolor{symbols}{rgb}{0,0.7,1}
\colorlet{connection}{red!80!black}
\colorlet{boxcolor}{blue!50}
\definecolor{darkred}{rgb}{0.7,0.1,0.1}
\definecolor{darkblue}{rgb}{0.4,0.1,0.8}
\definecolor{darkgreen}{rgb}{0.1,0.7,0.1}
\definecolor{franck}{rgb}{0,0,1}
\definecolor{pagebackground}{rgb}{1,1,1}
\definecolor{pageforeground}{rgb}{0,0,0}
\colorlet{symbols}{blue!90!black}
\colorlet{connection}{red!30!black}
\colorlet{boxcolor}{blue!50!black}
\def\slash{\leavevmode\unskip\kern0.18em/\penalty\exhyphenpenalty\kern0.18em}
\def\dash{\leavevmode\unskip\kern0.18em--\penalty\exhyphenpenalty\kern0.18em}
\DeclareMathAlphabet{\mathbbm}{U}{bbm}{m}{n}
\DeclareFontFamily{U}{BOONDOX-calo}{\skewchar\font=45 }
\DeclareFontShape{U}{BOONDOX-calo}{m}{n}{
  <-> s*[1.05] BOONDOX-r-calo}{}
\DeclareFontShape{U}{BOONDOX-calo}{b}{n}{
  <-> s*[1.05] BOONDOX-b-calo}{}
\DeclareMathAlphabet{\mcb}{U}{BOONDOX-calo}{m}{n}
\SetMathAlphabet{\mcb}{bold}{U}{BOONDOX-calo}{b}{n}
\setlist{noitemsep,topsep=4pt,leftmargin=1.5em}
\DeclareMathAlphabet{\mathbbm}{U}{bbm}{m}{n}
\DeclareMathAlphabet{\mcb}{U}{BOONDOX-calo}{m}{n}
\SetMathAlphabet{\mcb}{bold}{U}{BOONDOX-calo}{b}{n}
\DeclareFontFamily{U}{mathx}{\hyphenchar\font45}
\DeclareFontShape{U}{mathx}{m}{n}{
      <5> <6> <7> <8> <9> <10>
      <10.95> <12> <14.4> <17.28> <20.74> <24.88>
      mathx10
      }{}
\DeclareSymbolFont{mathx}{U}{mathx}{m}{n}
\DeclareMathSymbol{\bigtimes}{1}{mathx}{"91}
\providecommand{\figures}{false}
{ \ifthenelse{\equal{\figures}{false}} {#1}{\[ {\rm Figure \ missing !} \]} }{}
\def\CH{\mathcal{H}}
\def\CG{\mathcal{G}}
\def\CC{\mathcal{C}}
\def\CQ{\mathcal{Q}}
\def\CT{\mathcal{T}}
\tikzstyle{tinydots}=[dash pattern=on \pgflinewidth off \pgflinewidth]
\tikzstyle{superdense}=[dash pattern=on 4pt off 1pt]
\newcommand{\beq}{\begin{equation}}
\newcommand{\eeq}{\end{equation}}
\newcommand{\T}{\mathbf{T}}
\def\Labp{\mathfrak{p}}
\def\Labe{\mathfrak{e}}
\def\Labn{\mathfrak{n}}
\def\Labo{\mathfrak{o}}
\def\Labhom{\mathfrak{t}}
\def\Lab{\mathfrak{L}}
\def\${|\!|\!|}
\newenvironment{DIFnomarkup}{}{} 
\newfont{\indic}{bbmss12}
\def\Nabla_#1{\nabla_{\!#1}}
    \pgfmathsetlength{\pgf@xb}{\pgfkeysvalueof{/pgf/outer xsep}}%
    \pgfmathsetlength{\pgf@yb}{\pgfkeysvalueof{/pgf/outer ysep}}%
\def\symbol#1{\textcolor{symbols}{#1}}
\def\decorate#1#2{
        \ifnum#2>0
    		\foreach \count in {1,...,#2}{
	       	let
				\p1 = (sourcenode.center),
                \p2 = (sourcenode.east),
				\n1 = {\x2-\x1},
				\n2 = {1mm},
				\n3 = {(1.3+0.6*(\count-1))*\n1},
				\n4 = {0.7*\n1}
			in 
        		node[rectangle,fill=symbols,rotate=30,inner sep=0pt,minimum width=0.2*\n2,minimum height=\n2] at ($(sourcenode.center) + (\n3,\n4)$) {}
				}
		\fi
        \ifnum#1>0
    		\foreach \count in {1,...,#1}{
	       	let
				\p1 = (sourcenode.center),
                \p2 = (sourcenode.east),
				\n1 = {\x2-\x1},
				\n2 = {1mm},
				\n3 = {(1.3+0.6*(\count-1))*\n1},
				\n4 = {0.7*\n1}
			in 
        		node[rectangle,fill=symbols,rotate=-30,inner sep=0pt,minimum width=0.2*\n2,minimum height=\n2] at ($(sourcenode.center) + (-\n3,\n4)$) {}
				}
		\fi
}
\tikzset{
    dectriangle/.style 2 args={
        triangle,
        alias=sourcenode,
        append after command={\decorate{#1}{#2}}
    },
    dectriangle/.default={0}{0},
}
\tikzset{
	cross/.style={path picture={ 
  		\draw[symbols]
			(path picture bounding box.south east) -- (path picture bounding box.north west) (path picture bounding box.south west) -- (path picture bounding box.north east);
		}},
root/.style={circle,fill=green!50!black,inner sep=0pt, minimum size=1.2mm},
        dot/.style={circle,fill=pageforeground,inner sep=0pt, minimum size=1mm},
        dotred/.style={circle,fill=pageforeground!50!pagebackground,inner sep=0pt, minimum size=2mm},
        var/.style={circle,fill=pageforeground!10!pagebackground,draw=pageforeground,inner sep=0pt, minimum size=3mm},
        kernel/.style={semithick,shorten >=2pt,shorten <=2pt},
        kernels/.style={snake=zigzag,shorten >=2pt,shorten <=2pt,segment amplitude=1pt,segment length=4pt,line before snake=2pt,line after snake=5pt,},
        rho/.style={densely dashed,semithick,shorten >=2pt,shorten <=2pt},
           testfcn/.style={dotted,semithick,shorten >=2pt,shorten <=2pt},
        renorm/.style={shape=circle,fill=pagebackground,inner sep=1pt},
        labl/.style={shape=rectangle,fill=pagebackground,inner sep=1pt},
        xic/.style={very thin,circle,draw=symbols,fill=symbols,inner sep=0pt,minimum size=1.2mm},
        g/.style={very thin,rectangle,draw=symbols,fill=symbols!10!pagebackground,inner sep=0pt,minimum width=2.5mm,minimum height=1.2mm},
        xi/.style={very thin,circle,draw=symbols,fill=symbols!10!pagebackground,inner sep=0pt,minimum size=1.2mm},
	xies/.style={very thin,rectangle,fill=green!50!black!25,draw=symbols,inner sep=0pt,minimum size=1.1mm},
	xiesf/.style={very thin,rectangle,fill=green!50!black,draw=symbols,inner sep=0pt,minimum size=1.1mm},
        xix/.style={very thin,crosscircle,fill=symbols!10!pagebackground,draw=symbols,inner sep=0pt,minimum size=1.2mm},
        X/.style={very thin,cross,rectangle,fill=pagebackground,draw=symbols,inner sep=0pt,minimum size=1.2mm},
	xib/.style={thin,circle,fill=symbols!10!pagebackground,draw=symbols,inner sep=0pt,minimum size=1.6mm},
	xie/.style={thin,circle,fill=green!50!black,draw=symbols,inner sep=0pt,minimum size=1.6mm},
	xid/.style={thin,circle,fill=symbols,draw=symbols,inner sep=0pt,minimum size=1.6mm},
	xibx/.style={thin,crosscircle,fill=symbols!10!pagebackground,draw=symbols,inner sep=0pt,minimum size=1.6mm},
	kernels2/.style={very thick,draw=connection,segment length=12pt},
	keps/.style={thin,draw=symbols,->},
	kepspr/.style={thick,draw=connection,->},
	krho/.style={thin,draw=symbols,superdense,->},
	krhopr/.style={thick,draw=connection,superdense},
	triangle/.style = { regular polygon, regular polygon sides=3},
	not/.style={thin,circle,draw=connection,fill=connection,inner sep=0pt,minimum size=0.5mm},
	diff/.style = {very thin,draw=symbols,triangle,fill=red!50!black,inner sep=0pt,minimum size=1.6mm},
	diff1/.style = {very thin,dectriangle={1}{0},fill=red!50!black,draw=symbols,inner sep=0pt,minimum size=1.6mm},
	diff2/.style = {very thin,dectriangle={1}{1},fill=red!50!black,draw=symbols,inner sep=0pt,minimum size=1.6mm},
		diffmini/.style = {very thin,rectangle,fill=black,draw=black,inner sep=0pt,minimum size=0.75mm},
	 kernelsmod/.style={very thick,draw=connection,segment length=12pt},
	 rec/.style = {very thin,rectangle,fill=black,draw=black,inner sep=0pt,minimum size=2mm},
	cerc/.style={very thin,circle,draw=black,fill=symbols,inner sep=0pt,minimum size=2mm},
	stars/.style={very thin,star,star points=6,star point ratio=0.5, draw=black,fill=red,inner sep=0pt,minimum size=0.7mm},
	>=stealth,
        }
        \tikzset{
root/.style={circle,fill=black!50,inner sep=0pt, minimum size=3mm},
        circ/.style={circle,fill=white,draw=black,very thin,inner sep=.5pt, minimum size=1.2mm},
        round1/.style={fill=white,outer sep = 0,inner sep=2pt,rounded corners=1mm,draw,text=black,thin,minimum size=1.2mm},
          circ1/.style={circle,fill=red!10,draw=red,very thin,inner sep=.5pt, minimum size=1.2mm},
        rect/.style={fill=white,outer sep = 0,inner sep=2pt,rectangle,draw,text=black,thin,minimum size=1.2mm},
        rect1/.style={fill=white,outer sep = 0,inner sep=2pt,rectangle,draw,text=black,thin,minimum size=1.2mm},
        round2/.style={fill=red!10,outer sep = 0,inner sep=2pt,rounded corners=1mm,draw,text=black,thin,minimum size=1.2mm},
       round3/.style={fill=blue!10,outer sep = 0,inner sep=2pt,rounded corners=1mm,draw,text=black,thin,minimum size=1.2mm}, 
        rect2/.style={fill=black!10,outer sep = 0,inner sep=2pt,rectangle,draw,text=black,thin,minimum size=1.2mm},
        dot/.style={circle,fill=black,inner sep=0pt, minimum size=1.2mm},
        dotred/.style={circle,fill=black!50,inner sep=0pt, minimum size=2mm},
        var/.style={circle,fill=black!10,draw=black,inner sep=0pt, minimum size=3mm},
        kernel/.style={semithick,shorten >=2pt,shorten <=2pt},
         diag/.style={thin,shorten >=4pt,shorten <=4pt},
        kernel1/.style={thick},
        kernels/.style={snake=zigzag,shorten >=2pt,shorten <=2pt,segment amplitude=1pt,segment length=4pt,line before snake=2pt,line after snake=5pt,},
		kernels1/.style={snake=zigzag,segment amplitude=0.5pt,segment length=2pt},
		rho1/.style={densely dotted,semithick},
        rho/.style={densely dashed,semithick,shorten >=2pt,shorten <=2pt},
           testfcn/.style={dotted,semithick,shorten >=2pt,shorten <=2pt},
           visible/.style={draw, circle, fill, inner sep=0.25ex},
        renorm/.style={shape=circle,fill=white,inner sep=1pt},
        labl/.style={shape=rectangle,fill=white,inner sep=1pt},
        xic/.style={very thin,circle,fill=symbols,draw=black,inner sep=0pt,minimum size=1.2mm},
        xi/.style={very thin,circle,fill=blue!10,draw=black,inner sep=0pt,minimum size=1.2mm},
	xib/.style={very thin,circle,fill=blue!10,draw=black,inner sep=0pt,minimum size=1.6mm},
	xie/.style={very thin,circle,fill=green!50!black,draw=black,inner sep=0pt,minimum size=1mm},
	xid/.style={very thin,circle,fill=symbols,draw=black,inner sep=0pt,minimum size=1.6mm},
	edgetype/.style={very thin,circle,draw=black,inner sep=0pt,minimum size=5mm},
	nodetype/.style={very thick,circle,draw=black,inner sep=0pt,minimum size=5mm},
	kernels2/.style={very thick,draw=connection,segment length=12pt},
clean/.style={thin,circle,fill=black,inner sep=0pt,minimum size=1mm},	not/.style={thin,circle,fill=symbols,draw=connection,fill=connection,inner sep=0pt,minimum size=0.8mm},
	>=stealth,
        }
\def\DeclareSymbol#1#2#3{%
	\expandafter\gdef\csname MH@symb@#1\endcsname{\tikzsetnextfilename{symbol#1}%
	\tikz[baseline=#2,scale=0.15,draw=symbols,line join=round]{#3}}%
	\expandafter\gdef\csname MH@symb@#1s\endcsname{\scalebox{0.75}{\tikzsetnextfilename{symbol#1}%
	\tikz[baseline=#2,scale=0.15,draw=symbols,line join=round]{#3}}}%
	\expandafter\gdef\csname MH@symb@#1ss\endcsname{\scalebox{0.65}{\tikzsetnextfilename{symbol#1}%
	\tikz[baseline=#2,scale=0.15,draw=symbols,line join=round]{#3}}}%
	}
\def\<#1>{\ifthenelse{\boolean{mmode}}{\mathchoice{\csname MH@symb@#1\endcsname}{\csname MH@symb@#1\endcsname}{\csname MH@symb@#1s\endcsname}{\csname MH@symb@#1ss\endcsname}}{\csname MH@symb@#1\endcsname}}
 \def\1{\mathbf{\symbol{1}}}
\def\one{\mathbf{1}}
\DeclareMathAlphabet{\mathpzc}{OT1}{pzc}{m}{it}
\def\eqref#1{(\ref{#1})}
\newcommand*{\bigcdot}{}
\DeclareRobustCommand*{\bigcdot}{%
  \mathbin{\mathpalette\bigcdot@{}}%
}
\newcommand*{\bigcdot@scalefactor}{.5}
\newcommand*{\bigcdot@widthfactor}{1.15}
\newcommand*{\bigcdot@}[2]{%
  \sbox0{$#1\vcenter{}$}
  \sbox2{$#1\cdot\m@th$}%
  \hbox to \bigcdot@widthfactor\wd2{%
    \hfil
    \raise\ht0\hbox{%
      \scalebox{\bigcdot@scalefactor}{%
        \lower\ht0\hbox{$#1\bullet\m@th$}%
      }%
    }%
    \hfil
  }%
}
\def\two{{\<generic>\kern0.05em\<genericb>}}
\def\twoI{{\<Ito>\kern0.05em\<Itob>}}
\def\mail#1{\burlalt{#1}{mailto:#1}}
\declaretheorem[style=definition]{example}
\begin{document}

\title{Approximations of dispersive PDEs in the presence of low-regularity randomness}

\author{Yvonne Alama Bronsard$^1$, Yvain Bruned$^2$, Katharina Schratz$^1$}
\institute{LJLL (UMR 7598), Sorbonne University \and IECL (UMR 7502), Université de Lorraine\\
Email:\ \begin{minipage}[t]{\linewidth}
\mail{yvonne.alama_bronsard@upmc.fr}, \\
\mail{Yvain.Bruned@universite-lorraine.fr}, \\ \mail{katharina.schratz@sorbonne-universite.fr}.
\end{minipage}} 

\maketitle 

\begin{abstract}
We introduce a new class of numerical schemes which allow for low regularity approximations to the expectation $ \mathbb{E}(|u_{k}(t, v^{\eta})|^2)$, where $u_k$ denotes the $k$-th Fourier coefficient of the solution $u$  of the dispersive  equation and $ v^{\eta}(x) $ the associated random initial data. This quantity plays an important role in physics, in particular in the study of wave turbulence where one needs to adopt a statistical approach in order to obtain deep insight into the {\it generic} long-time behaviour of solutions to dispersive equations. Our new class of schemes is based on  Wick's theorem and Feynman diagrams together with a resonance based discretisation~\cite{BS}  set in a more general context: we introduce a novel combinatorial structure called {paired decorated forests} which are two decorated trees whose decorations on the leaves come in pair.  The character of the scheme draws its inspiration from the treatment of  singular stochastic partial differential equations  via Regularity Structures. In contrast to classical approaches, we do not discretize the PDE itself, but rather its expectation. This allows us to heavily exploit the optimal resonance structure and underlying gain in regularity on the finite dimensional (discrete) level.
\\[.4em]
\noindent {\scriptsize \textit{Keywords:} Decorated trees, dispersive equations, low regularity integrators, wave turbulence }\\
\noindent {\scriptsize\textit{MSC classification:} : Primary – 60L70, 35Q82; Secondary –
65M12, 16T05, 35Q53, 35Q55, 60L30} 


\end{abstract}
\setcounter{tocdepth}{2}
\setcounter{secnumdepth}{4}
\tableofcontents
\section{Introduction}
We consider  nonlinear dispersive equations
\begin{equation}\label{dis}
\begin{aligned}
& i \partial_t u(t,x) +   \mathcal{L}\left(\nabla\right) u(t,x) =\vert \nabla\vert^\alpha p\left(u(t,x), \overline u(t,x)\right),\quad (t,x) \in  \R \times \T^d\\
\end{aligned}
\end{equation}
with random initial data:
\begin{equation}\label{ic}
\begin{aligned}
u(0,x) = v^{\eta}(x) = \sum_{k\in \Z^d}v_k \eta_{k}e^{ikx},
\end{aligned}
\end{equation}
where $v_k = \overline{v_{-k}}$ and $(\eta_k)_{k\in \Z^d}$ is a family of standard complex Gaussian random variables satisfying
$$
\mathop{\mathbb{E}}(\eta_{k}\eta_{-\ell}) = \delta_{k,\ell}.
$$
Namely, we require $\eta_0$ to be a real normalized centered Gaussian, and for $(\eta_k)_{k\in \Z^d}$ to be independent and to satisfy $\eta_k = \overline{\eta}_{-k}$. We first note that the above setting is for real initial conditions. In the complex case, we simply need to take $(\eta_k)_{k\in \Z^d}$ to be independent standard centered complex Gaussians which satisfy $\mathop{\mathbb{E}}(\eta_{k}\overline{\eta_{\ell}}) = \delta_{k,\ell}$, and refer to the Remark \ref{real-ic}. Working with Gaussian random variables allows us to use Wick's formula for computing the second moment of $u_k(\tau,v^\eta)$. Nevertheless, one can go beyond the Gaussian realm. For a discussion on the subject we refer to Remark \ref{Gaus-ic} .

Furthermore, we assume in \eqref{dis} a polynomial nonlinearity $p$. Moreover, we suppose that there exists a deterministic time $T > 0$ such that the structure of \eqref{dis} implies at least almost-sure local wellposedness of the problem in an appropriate functional space on the finite time interval $]0,T]$. This is a non-trivial assumption since naturally the time of existence is random and not necessarily almost surely bounded from below by a positive time $T$. We refer to the works on probabilistic Cauchy theory given for instance by \cite{BT,BT2, SX, CT,NS, CGI} in the case of nonlinear Schr\"odinger and wave equations. With the above assumption we have that the solution $u$ admits up until time $T$ a second moment restricted on the almost surely set of its definition. In order to keep notations simple, we will omit such a set in the sequel.

The differential operators $\mathcal{L}\left(\nabla \right) $ and $\vert \nabla\vert^\alpha$ shall cast in Fourier  space into the form 
\begin{equs}\label{Lldef}
 \mathcal{L}\left(\nabla \right)(k) = k^\sigma + \sum_{\gamma : |\gamma| < \sigma} a_{\gamma} \prod_{j} k_j^{\gamma_j} ,\qquad  \vert \nabla\vert^\alpha(k) =  \sum_{\gamma : |\gamma| < \sigma} \prod_{j=1}^{d} k_j^{\gamma_j}
\end{equs}
for some $ \alpha \in \R $, $\sigma \in \N$, $ \gamma \in \Z^d $ and $ |\gamma| = \sum_i \gamma_i $,
where  for $k = (k_1,\ldots,k_d)\in \Z^d$ and $m = (m_1, \ldots, m_d)\in \Z^d$ we set \begin{equs}
k^\sigma  = k_1^\sigma + \ldots + k_d^\sigma, \qquad k \cdot m = k_1 m_1 + \ldots + k_d m_d.
\end{equs}
 Concrete examples are discussed in Section \ref{sec:app}, including the cubic nonlinear  Schr\"odinger (NLS) equation
\begin{equation}\label{nlsIntro}
i \partial_t u + \mathcal{L}\left(\nabla\right)  u = \vert u\vert^2 u, \quad \mathcal{L}\left(\nabla\right) = \Delta, \quad \alpha = 0, \quad p(u,\bar{u}) = |u|^2u
\end{equation}
and the Korteweg--de Vries (KdV) equation 
\begin{equation}\label{kdvIntro}
 i\partial_t u +\mathcal{L}\left(\nabla\right) u = i\frac12 \partial_x u^2, \quad \mathcal{L}\left(\nabla\right) = i\partial_x^3, \quad \alpha = 1, \quad p(u,\bar{u}) = i\frac{1}{2}u^2.
\end{equation}
 Note that one could deal with non-polynomial nonlinearities, and work on more general domains than the periodic one, by combining the present work with the general framework established in \cite{BBS}. Indeed, by introducing nested commutator structures the work \cite{BBS} overcomes the necessity of periodic boundary conditions and  polynomial nonlinearities, see also \cite{FS, AB}.  One can use the iterated integrals produced in \cite{BBS} and then proceed in Fourier space as in the present work. Doing all the computations in physical space will require a description of the second moment of the random initial data directly in physical space; however, such a randomisation is in general mostly described in Fourier space.

The aim of this paper is to introduce a new class of  schemes which, by denoting by $\tau$ the time step, allows for a low regularity approximation to
\begin{equation}\label{exp} 
\mathbb{E}(|u_{k}(t, v^{\eta})|^2).
\end{equation}
Here, $\mathbb{E}$ denotes the expectation and $u_k$ the $k$-th Fourier coefficient of the solution $u$ of \eqref{dis} with corresponding initial data $v^{\eta}$ defined at equation \eqref{ic}.
The quantity \eqref{exp} plays an important role in physics, in particular in the study of wave turbulence. A fundamental question in the latter is to derive a rigorous justification of the wave kinetic equation (WKE), an equation which describes the effective dynamics of an interacting wave system in the thermodynamic limit. This yields deep insight into the {\it generic} long-time behaviour of solutions to dispersive equations.
In order to derive this equation one starts with a random initial value and work on a large box of size $ L $. Then under suitable scaling laws 
which depend on the strength $\mu$ of the non-linearity, and the size $ L $ of the box, one can show that in the limit of large $L$ and small $\mu$ the effective dynamic of $ \mathbb{E}(|u_{k}(t, v^{\eta})|^2) $ is given by the WKE over sufficiently long time scales.

 A rigorous derivation of the WKE for NLS is performed in \cite{deng_hani_2021,deng2021derivation,ACG} via  a diagrammatic expansion. A similar rigorous derivation result for a high dimensional $(d \ge 14)$ discrete KdV-type equation was achieved in  \cite{staffilani2021wave} using also Feynman diagrams, in order to derive its associated wave kinetic equation at the kinetic time under a suitable scaling law. All these works are based on the iteration of   Duhamel's formula for \eqref{dis}:
\begin{equs}\label{duhamel}
u(\tau,v^{\eta}) = e^{ i\tau  \mathcal{L}\left(\nabla\right)} v^{\eta}  - i\vert \nabla \vert^\alpha e^{ i\tau  \mathcal{L}\left(\nabla \right)}  \int_0^{\tau} e^{ -i\xi  \mathcal{L}\left(\nabla \right)} p(u(\xi,v^{\eta} ),\bar u(\xi,v^{\eta} ))  d\xi \quad
\end{equs}
which in Fourier space cast into the form
\begin{equs} \label{duhamel_fourier}
u_k(\tau,v^{\eta}) & = e^{ i \tau \mathcal{L}\left(\nabla \right)(k)} \eta_k v_k   - i\vert \nabla \vert^\alpha(k) e^{ i\tau  \mathcal{L}\left(\nabla \right)(k)} \\ & \int_0^{\tau} e^{ -i\xi \mathcal{L}\left(\nabla \right)(k)} p_k(u(\xi,v^{\eta}),\bar u(\xi,v^{\eta} )) d\xi ,
\end{equs}
where $ p_k $ is given for $ p(u,\bar{u}) = |u|^2 u $ by
\begin{equs}
p_k(u, \bar{u}) = \sum_{k = -k_1 + k_2 +k_3 } \bar{u}_{k_1} u_{k_2} u_{k_3}. 
\end{equs}
Iterations of \eqref{duhamel_fourier} produces a tree series, see also 
\cite{Christ,Gub11,oh1} where the series is made explicit for some specific equations. This expansion can also be described via random tensors introduced in \cite{deng2020random}.   
From this tree series, one can compute 
$ \mathbb{E}(|u_{k}(\tau, v^{\eta})|^2) $ using Wick's theorem and Feynman diagrams  \cite{deng_hani_2021,deng2021derivation}. 

The theoretical works mentioned above on the study of wave turbulence, together with the idea of a  {resonance based discretisation}  \cite{BS}, set the inspiration for this work. From an algebraic point of view we, however, have to work in a much more general context considering trees with more decorations than introduced in \cite{BS}, with a new combinatorial structure called \emph{paired decorated forests}. This will allow us to approximate a large class of dispersive PDEs with one general scheme, without the need of treating each and every equation separately.  

In contrast to classical approaches,  we do not discretize the PDE itself, but rather its expectation. This   allows us to  heavily exploit the optimal resonance structure and underlying gain in regularity on the discrete level. More precisely, the iteration of Duhamel's formulation \eqref{duhamel} of \eqref{dis} can be expressed using decorated trees, see \cite{BS}.  Namely, it follows from \cite[Section 1.2]{BS} that the $k-$th Fourier coefficient of the approximated solution at order $  r $ is given by 
\begin{equs} \label{decoratedV1}
U_{k}^{r}(\tau, v^{\eta}) & = \sum_{T \in \CV^r_k} \frac{\Upsilon^{p}(T)(v^{\eta})}{S(T)} \left( \Pi T \right)(\tau)
\end{equs}
where $ \CV^r_k $ is a set of decorated trees which incorporate the frequency $k$, $ S(T) $ is the symmetry factor associated to the tree $ T $, $ \Upsilon^{p}(T) $ is the coefficient appearing in the iteration of Duhamel's formulation and $ (\Pi T)(\tau) $ represents a Fourier iterated integral. The exponent $ p $ corresponds to the nonlinearity appearing in the right hand side of \eqref{dis}. The exponent $r$ in $ \CV^r_k $ means that we consider only 
trees of size at most $ r +1 $, these are the trees corresponding to an iterated integral of depth at most $r+1$. These quantities are described in detail in Section \ref{sec::4}. Generally speaking, the sum \eqref{decoratedV1} is the truncation at order $ r $ of the infinite series describing formally the solution of \eqref{dis}. The low regularity scheme for \eqref{dis} is then obtained by replacing each oscillatory integral $ (\Pi T)(\tau) $ appearing in the finite sum \eqref{decoratedV1} by a low regularity approximation that embeds the resonance structure into the  numerical discretization. We will denote the latter low-regularity approximation operator by $ \Pi^{n,r} $. Here, $ r $ corresponds to the order of the discretization and $ n $ is the a priori regularity assumed on the initial data $ v^{\eta} $. Namely, we assume $v^{\eta}\in H^n$, where $H^n$ is a Sobolev space.  The general scheme then takes the form:
\begin{equs} \label{general_scheme_BS}
U_{k}^{r}(\tau, v^{\eta}) & = \sum_{T \in \CV^r_k} \frac{\Upsilon^{p}(T)(v^{\eta})}{S(T)} \left( \Pi^{n,r} T \right)(\tau).
\end{equs}
The  local error structure for each approximated iterated integral is given by
\begin{equs}\label{eq:loci}
\left(\Pi T - \Pi^{n,r} T \right)(\tau)  = \mathcal{O}\left( \tau^{r+2} \mathcal{L}^{r}_{\text{\tiny{low}}}(T,n) \right),
\end{equs}
where $\mathcal{L}^{r}_{\text{\tiny{low}}}$ involves only lower order derivatives.
The form of the scheme draws its inspiration from the treatment of  singular stochastic partial differential equations (SPDEs) via Regularity Structures in \cite{reg,BHZ,BCCH,EMS}. These decorated trees expansions are generalization of the B-series widely used for ordinary differential equations, we refer to \cite{Butcher72,CCO08,HLW,Hans}.
 In the end, one obtains  an approximation of $u $ under much lower regularity assumptions than classical methods (e.g., splitting methods, exponential integrators \cite{R1,Faou12,HLW,HochOst10,HLRS10,Law67,R3,LR04,McLacQ02,SanBook}) require, which in general introduce the local error 
\begin{equs}\label{eq:lociClass}
\mathcal{O}\left( \tau^{r+2} \mathcal{L}^{r}(T,n) \right)
\end{equs}
involving the full high order differential operator $\mathcal{L}^{r}$. Indeed, we have that  $\mathcal{D}( \mathcal{L}_{\text{\tiny{low}}}) \supset \mathcal{D}( \mathcal{L})$, meaning that the local error structure \eqref{eq:loci} allows us to deal with a  {rougher} class of solutions than the classical error bound \eqref{eq:lociClass}. The underlying idea behind these low-regularity approximations was initiated by the work of \cite{OS}, and then generalized by \cite{BS, BBS} to higher order methods, allowing for approximations to large classes of equations. 
Although the error bound \eqref{eq:loci} derived here are formal, one can obtain rigorous low-regularity error bounds using a classical Sobolev space setting (see \cite{FS, AB}), as well as 
sharp $L^2$ error estimates by using discrete Strichartz estimates and discrete Bourgain spaces (see \cite{IZ, ORS19, ORS20,RSKDV, JORS}). 
Let us mention that the local error analysis can be nicely understood via a Birkhoff factorisation of the character $ \Pi^{n,r} $ (see \cite{BS,BE}) that involves a deformed Butcher-Connes-Kreimer coproduct (see \cite{Butcher72,CK,CKI,BS,BM}). In the present work, we give a recursive description with various Taylor remainders in Definition~\ref{def:Llow}. This formulation is similar to \cite{BBS} where the Birkhoff factorisation is not available. The Birkhoff factorisation can also be used in our work for a more precise description of the Taylor remainders. We also believe that it could have an impact on the general description of symmetries for low regularity schemes. For the moment, only symmetric and symplectic schemes have been work out by hand (see \cite{AB-sym,MS}). Lastly, we note that the scheme \eqref{general_scheme_BS} has been generalized to non-polynomial nonlinearities  and to parabolic equations in \cite{BBS} with the use of nested commutators first introduced in \cite{FS}.
 
A natural route to obtain a low regularity approximation to $ \mathbb{E}(|u_{k}(\tau, v^{\eta})|^2) $ would   be to first compute the low regularity approximation $U^{n,r}_{k}(\tau, v^{\eta})$ given in \eqref{general_scheme_BS} to $u_{k}(\tau, v^{\eta}) $, and then to evaluate its expectation $ \mathbb{E}(|U^{n,r}_{k}(\tau, v^{\eta})|^2) $  while removing the terms of higher order. By doing so, we would, however, ignore the enhanced resonance structure  and gain in regularity introduced by applying the expectation $ \mathbb{E} $. In order to exploit the latter on the discrete level, we will directly compute a low regularity approximation to the second moment of $ U^{r}_k $, where is $U^{r}_k$ defined in \eqref{general_scheme_BS} and is the truncation at order $ r $ of the tree series describing $ u_k $. Namely, we are interested in computing the following series:
\begin{equs}\label{series-trunc}
V_{k}^{r}(\tau, v) & = \CQ_{\leq r+1} \mathbb{E}(|U_{k}^{r}(\tau, v^{\eta})|^2)
\\ & =  \sum_{F =T_1 \cdot T_2 \in \CG^r_k}  m_{F}  \frac{\bar{\Upsilon}^{p}(T_1)(v) \, \Upsilon^{p}(T_2)(v)}{S(T_1) S(T_2)} \left( \bar{\Pi} T_1 \right)(\tau)  \, \left( \Pi T_2 \right)(\tau)
\end{equs}
where $ \CQ_{\leq r+1} $ keeps only the terms of order less than or equal to $ \tau^{r+1} $, and $ m_F $ is a combinatorial coefficient associated to $ F $. Namely, $m_F$ counts the number of pairings that produce $ F $.
It remains to describe the set $ \CG^r_k $.
 For this purpose we  introduce a new combinatorial structure called paired decorated forest which are two decorated trees whose decorations on the leaves come in pair. This pairing among the leaves come from the use of Wick's theorem for products of Gaussian random variables that appear in the computation of $ V_k^r $. Similar structures have been used in the theoretical analysis of  dispersive PDEs \cite{deng_hani_2021,deng2021derivation,ACG,staffilani2021wave} as well as in stochastic differential equations  and exotic B-series \cite{LV, Bronasco} . 
 The set $ \CG^r_k $ consists of all paired forests $ F = T_1 \cdot T_2 $ of size at most $ r +1 $, meaning that the term $ \left( \bar{\Pi} T_1 \right)(\tau)  \, \left( \Pi T_2 \right)(\tau)$ contains at most $ r+1 $ integrals in time. $ T_1$  and $ T_2 $ are the two paired trees and  $ \cdot  $ denotes the forest product. 
 Further, in the above series we have used the following short hand notation:
\begin{equs}
\bar{\Upsilon}^{p}(T_1)(v) = \overline{\Upsilon^{p}(T_1)(v)}, \quad \left( \bar{\Pi} T_1 \right)(\tau) = \overline{ \left( \Pi T_1 \right)(\tau) },
\end{equs}
which stands for the complex conjugate, and will be abundantly used throughout the reminder of this article.
Notice that the above sum \eqref{series-trunc} is not symmetric, meaning that both $ \left( \bar{\Pi} T_1 \right)(\tau)  \, \left( \Pi T_2 \right)(\tau) $ and $ \left( \bar{\Pi} T_2 \right)(\tau)  \, \left( \Pi T_1 \right)(\tau)$ appear when $ T_1 \neq T_2 $.
 We then proceed with the discretization in time of \eqref{series-trunc} as is done in \cite{BS}: we replace $ (\Pi T_i)(\tau) $ by their resonance based approximations $ (\Pi^{n,r} T_i) $ which yields the following tree series:
\begin{equs} \label{schema_general}
V_{k}^{n,r}(\tau, v) & = \sum_{F= T_1 \cdot T_2 \in \CG^r_k} m_F \frac{\bar{\Upsilon}^{p}(T_1)(v) \, \Upsilon^{p}(T_2)(v)}{S(T_1) S(T_2)} \\ &  \mathcal{Q}_{\leq r +1} \left( \bar{\Pi}^{n,r} T_1  \Pi^{n,r} T_2 \right)(\tau) .
\end{equs}
The local error structure is given by
\begin{equs} \label{eq:loci2}
\left(\bar{\Pi} T_1 \Pi T_2 - \CQ_{\leq r+1}\Pi^{n,r} T_1 \bar{\Pi}^{n,r} T_2 \right)(\tau)  = \mathcal{O}\left( \tau^{r+2} \mathcal{L}^{r}_{\text{\tiny{low}}}(T_1 \cdot T_2,n) \right).
\end{equs}
The definition of $ \mathcal{L}^{r}_{\text{\tiny{low}}}(T_1 \cdot T_2,n)  $ is, however, much more involved than in the work of \cite{BS}. Indeed, given that we consider decorated forests $F = T_1\cdot T_2$, whose leaves (which encodes the frequency) come in pairs, we have that the same frequencies can appear in both the decorated tree $T_1$ and that of $T_2$. Hence, unlike in the work of \cite {BS}, the set of frequencies appearing on the leaves of $T_1$ and $T_2$ are not disjoint, and in consequence a new framework needs to be considered.
Moreover, since our calculations are made with these pairings $T_1\cdot T_2$, cancellations appear when computing the variance \eqref{exp} and the schemes obtained in this work are of simpler form than those obtained in the work of \cite{BS}.

Our main result is the general class of schemes given by \eqref{schema_general} together with the correct combinatorial structures on paired decorated forests, which yields a low-regularity approximation of \eqref{exp}. Our construction is based upon Wick's formula (see Proposition~\ref{Wick}) and is presented in Proposition~\ref{prop_moment_2}. The local error structure \eqref{eq:loci} is then given in Theorem~\ref{thm:genloc}. Theorem~\ref{thm:genloc} is heavily based on  Theorem~\ref{approxima_tree_paired} which introduces a new recursive definition of  $ \mathcal{L}^{r}_{\text{\tiny{low}}}(T_1 \cdot T_2,n)  $ (see Definition~\ref{def:Llow}) which differs from the one presented in \cite{BS}.

The new scheme \eqref{schema_general} is an attempt to implement the low regularity integrators introduced in \cite{BS} when randomness is introduced at the level of the initial data. We expect that the scheme described in \cite{BS,BBS} will be able to enhance the error structure of classical methods for (dispersive) stochastic PDEs (see, e.g. \cite{Charly,Arnaud,Lord} and the references therein) for low regularity initial data where randomness is added via a stochastic forcing.

\begin{remark} \label{real-ic}
	Our assumption on the initial data in \eqref{ic} enforces the initial data to be real. Our scheme is fairly general and works also beyond dispersive PDEs,  such as for parabolic PDEs. In \cite{deng2021derivation,deng_hani_2021}, the authors use a different assumption on the noise $(\eta_k)_{k\in \Z^d}$ which is a family of independent identically distributed complex random variables (centered standard Gaussian variables or uniform distributions on the unit circle) satisfying: For all $k,\ell \in\Z^d$
	\begin{equs}
		\mathbb{E}(|\eta_k|^2) = 1, \quad \mathbb{E}(\eta_k \eta_\ell) = 0.
	\end{equs}
In the Gaussian case see also \cite{CollotGermain}.
The second identity in the above simplifies a lot the interactions that need to be considered for building up the Feynman diagrams. 
Indeed, only the terms $ \mathbb{E}(\eta_k \overline{\eta_\ell}) $  from Wick's theorem are potentially non-zero. These terms are the easiest to treat requiring no extra regularity, (see Section~\ref{ex} and \eref{ex:trees} where the pairing $ T_0 \cdot T_{1,1} $ is non longer possible, it is the pairing that asks more regularity in the discretisation, see \eqref{ex:T11}). 
	\end{remark}

\begin{remark}\label{Gaus-ic}
	In \eqref{series-trunc}, the summation is performed on paired decorated forests of the form $ T_1 \cdot T_2 $. The fact that we have only two trees and not more correspond to the computation of the second moment. For the $p$-th moment, one will get $p$ paired decorated trees where the leaves will come in pairs. In both cases second moment or higher moments, one uses Wick's theorem applied to product of Gaussian random variables for computing the pairings.
	One can potentially go beyond the Gaussian realm by using cumulants for computing the moments of a given random variable. This was extensively used in the context of singular SPDEs (see \cite{CH16} for a general convergence result of renormalised models and \cite{DH21,BN} for recent works in the discrete setting). In our context, one can imagine to have trees where the leaves are split into different sets of cardinality bigger than two. The scheme will be robust to this case.
	\end{remark}

Before introducing our general framework we will in the next section illustrate our main idea on an example taken from NLS. 
\subsection{Example}\label{ex}
Let us consider the equation:
\begin{equs} \label{equa_example}
i \partial_t u + \Delta  u = \vert u\vert^2 u, \quad u(0,x) = v^{\eta}(x) =  \sum_{k\in \Z^d}v_k \eta_{k} e^{ikx}
\end{equs}
set on the $d$ dimensional torus. We will discuss the approximation of $\mathbb{E}\left(|u_k(\tau, v^{\eta})|^2\right)$ to first order. We rewrite \eqref{equa_example} using Duhamel's formula:
\begin{equs}
		u(\tau, v^{\eta}) = e^{i \tau \Delta } v^{\eta} - i e^{i\tau\Delta}\int_0^\tau e^{-i\xi \Delta} \left(  \overline{	u(\xi, v^{\eta})} 	u(\xi, v^{\eta} )^2 \right) d\xi.
\end{equs}
One obtains the following equation for the Fourier coefficient  $ u_k(\tau, v^{\eta}) $:
\begin{equs} \label{duhamel_Fourier}
	\begin{aligned}
	u_k(\tau, v^{\eta}) & = e^{- i \tau k^2 } \eta_k  v_k -\sum_{\substack{k_1,k_2,k_3 \in \Z^d\\-k_1+k_2+k_3 = k} } i e^{ -i \tau k^2} \\ & \int_0^\tau e^{i\xi k^2} \overline{	u_{k_1}(\xi, v^{\eta})} 	u_{k_2}(\xi, v^{\eta} )	u_{k_3}(\xi, v^{\eta})d\xi,
	\end{aligned}
\end{equs}
where the linear operator $ e^{i \tau \Delta} $ (resp.  $ e^{-i \tau \Delta} $) is sent to $  e^{ - i \tau k^2}  $ (resp. $  e^{ i \tau k^2}  $) in Fourier space. The splitting of $ k $ into $ k_1, k_2, k_3 $ comes from the fact that moving to Fourier space, products become convolution products on the frequencies.
 We fix $r=0$ and iterate \eqref{duhamel_Fourier} by replacing $ u_{k_j}(\tau, v^{\eta}) $ by
 \begin{equs}
 	u_{k_j}(\tau, v^{\eta}) =  e^{- i \tau k_j^2 } \eta_{k_j}  v_{k_j} +  \CO(\tau),
 \end{equs}
with $ j \in \lbrace 1,2,3 \rbrace  $.
 We obtain the following first order approximation of the $k$-th Fourier coefficient $ u_k(\tau, v^{\eta}) $:
\begin{equs}
u_k(\tau,v^{\eta}) & = e^{ -i \tau  k^2} \eta_k v_k   -\sum_{\substack{k_1,k_2,k_3 \in \Z^d\\-k_1+k_2+k_3 = k} } i e^{ -i \tau k^2} \\  & \int_0^{\tau} e^{ i\xi k^2} (e^{i\xi k_1^2} \overline{\eta_{k_1}} \overline{v_{k_1}}) (e^{-i\xi k_2^2} \eta_{k_2} v_{k_2}) (e^{-i\xi k_3^2} \eta_{k_3} v_{k_3}) d\xi  + \CO(\tau^2).
\end{equs}
We encode the above Duhamel iterates using the tree series \eqref{decoratedV1}. Namely, on the continuous level, from \eqref{decoratedV1} we have
\begin{equs}\label{ex:nls1}\begin{aligned}
U^{0}_k(\tau,v^{\eta}) & = u_k^0(\tau,v^{\eta}) +  u_k^1(\tau,v^{\eta}),
\\ u_k^0(\tau,v^{\eta}) & =  \frac{\Upsilon^{p}(T_0)(v^{\eta})}{S(T_0)} \left( \Pi T_0 \right)(\tau), \\ u_k^1(\tau,v^{\eta}) &= \sum_{\substack{k_1,k_2,k_3 \in \Z^d\\-k_1+k_2+k_3 = k} } \frac{\Upsilon^{p}(T_1)(v^{\eta})}{S(T_1)} \left( \Pi T_1 \right)(\tau),
\end{aligned}
\end{equs}
%

where 
\begin{equs}\label{T1k}
 T_0 & = \begin{tikzpicture}[scale=0.2,baseline=-5]
\coordinate (root) at (0,1);
\coordinate (tri) at (0,-1);
\draw[kernels2] (tri) -- (root);
\node[var] (rootnode) at (root) {\tiny{$ k $}};
\node[not] (trinode) at (tri) {};
\end{tikzpicture} ,   \quad T_1 =  \begin{tikzpicture}[scale=0.2,baseline=-5]
\coordinate (root) at (0,-1);
\coordinate (t3) at (0,1);
\coordinate (t4) at (0,3);
\coordinate (t41) at (-2,5);
\coordinate (t42) at (2,5);
\coordinate (t43) at (0,7);
\draw[kernels2] (t3) -- (root);
\draw[symbols] (t3) -- (t4);
\draw[kernels2,tinydots] (t4) -- (t41);
\draw[kernels2] (t4) -- (t42);
\draw[kernels2] (t4) -- (t43);
\node[not] (rootnode) at (root) {};
\node[not] (rootnode) at (t4) {};
\node[not] (rootnode) at (t3) {};
\node[var] (rootnode) at (t41) {\tiny{$ k_1 $}};
\node[var] (rootnode) at (t42) {\tiny{$ k_{\tiny{3}} $}};
\node[var] (rootnode) at (t43) {\tiny{$  k_{\tiny{2}} $}};
\end{tikzpicture}, \quad S(T_0) = 1, \quad S(T_1) = 2, \\
  \Upsilon^{p}(T_0)(v^{\eta}) & = \eta_k v_{k}, \quad  \Upsilon^{p}(T_1)(v^{\eta})  = 2 \bar{\eta}_{k_1} \bar{v}_{k_1} \eta_{k_2} v_{k_2}\eta_{k_3} v_{k_3},
  \\ (\Pi T_0)(\tau)  & = e^{-i \tau k^2}, \quad (\Pi T_1)(\tau) = -i e^{-i\tau k^2} \int^{\tau}_{0} e^{i \xi k^2} e^{i \xi k_1^2} e^{-i \xi k_2^2} e^{-i \xi k_3^2} d \xi.
\end{equs}
For the decorated trees written above, we have used the following coding:  an edge  $ \<thick> $ (resp. $ \<thick2> $) corresponds to a factor $ e^{-i t k^2} $ (resp. $ e^{i t k^2} $), while an edge  $\<thin>$ (resp. $\<thin2>$) corresponds to an integral $ -i \int^{\tau}_0 e^{i \xi k^2} \cdots d \xi $ (resp. $ -i\int^{\tau}_0 e^{-i \xi k^2} \cdots d \xi $). The dash dotted line introduces a minus sign. The decorations on the leaves correspond to the frequencies that add up in the intern node with a minus sign if one faces a dash edge. In $ T_1 $, the two inner nodes are decorated by $ k = - k_1 + k_2 + k_3 $ where $ - k_1 $ comes from the edge $ \<thick2> $. The definition of the symmetry factor $S(T)$ and the coefficients $\Upsilon^{p}(T)$ are defined in Section \eqref{sec::4}. The symmetry factor corresponds to the number of internal symmetries of the tree taking the edge decorations into account but not the node decorations, which explains why $S(T_1) = 2$.

By construction, $U^0_k$ is a first order truncated approximation of $u_k$, and it follows from \eqref{ex:nls1} that 
\begin{equs}
\begin{aligned}
{\mathbb{E}}(|U^{0}_k(\tau,v^{\eta})|^2) &= {\mathbb{E}}\left((u^0_k(\tau,v^{\eta})+ u^1_k(\tau,v^{\eta}))(\overline{u^0_k(\tau,v^{\eta})+ u^1_k(\tau,v^{\eta})})\right)\\\nonumber
&= {\mathbb{E}}(|u^0_k(\tau,v^{\eta})|^2) + 2Re{\mathbb{E}}\left(\bar{u}^0_k(\tau,v^{\eta})u^1_k(\tau,v^{\eta})\right) +  \mathcal{O}(\tau^2).
\end{aligned}
\end{equs}
The main idea is then that we can express on the continuous level the above expansion using the tree series \eqref{series-trunc}, and on the discrete level by the tree series \eqref{schema_general}. We show how this is done by detailing the computation of the term 
$$\mathbb{E}( \bar{u}_k^0(\tau,v^{\eta}) u_k^1(\tau,v^{\eta})).$$
It follows from \eqref{T1k} that we need to consider
\begin{equs} \label{pairings}
\mathbb{E} \left(  \overline{\Upsilon^{p}(T_0)(v^{\eta})} \Upsilon^{p}(T_1)(v^{\eta}) \right) & =2 \bar{v}_{k} \bar{v}_{k_1} v_{k_2} v_{k_3}\mathbb{E} \left( \bar{\eta}_{k} \bar{\eta}_{k_1}\eta_{k_2} \eta_{k_3} \right).
\end{equs}
Further, we have,
\begin{equs}
\mathbb{E} \left( \bar{\eta}_{k} \bar{\eta}_{k_1}\eta_{k_2} \eta_{k_3} \right) & = 
\mathbb{E} \left( \bar{\eta}_{k} \bar{\eta}_{k_1} \right) \mathbb{E} \left( \eta_{k_2} \eta_{k_3} \right) + \mathbb{E} \left( \bar{\eta}_{k} \eta_{k_2} \right) \mathbb{E} \left( \bar{\eta}_{k_1} \eta_{k_3} \right) \\ & + \mathbb{E} \left( \bar{\eta}_{k} \eta_{k_3} \right) \mathbb{E} \left( \bar{\eta}_{k_1} \eta_{k_2} \right)
\\ & = \delta_{k,\bar{k}_1} \delta_{k_2,\bar{k}_3} + \delta_{k,k_2} \delta_{k_1,k_3}  + \delta_{k,k_3} \delta_{k_1,k_2} 
\end{equs}
 where we have used Wick's formula (see Proposition~\ref{Wick}) for products of Gaussian random variables. The different $ \delta_{\cdot,\cdot} $ correspond  to different pairings and fix completely the frequencies on the leaves. We will introduce a new combinatorial object for describing these pairing called  \emph{decorated paired forests}. Below, we represent the various pairings coming from \eqref{pairings}:
 \begin{equs}\label{ex:trees}
T_{0} &  = \begin{tikzpicture}[scale=0.2,baseline=-5]
\coordinate (root) at (0,1);
\coordinate (tri) at (0,-1);
\draw[kernels2] (tri) -- (root);
\node[var] (rootnode) at (root) {\tiny{$ k $}};
\node[not] (trinode) at (tri) {};
\end{tikzpicture} ,   \quad T_{1,1} =  \begin{tikzpicture}[scale=0.2,baseline=-5]
\coordinate (root) at (0,-1);
\coordinate (t3) at (0,1);
\coordinate (t4) at (0,3);
\coordinate (t41) at (-2,5);
\coordinate (t42) at (2,5);
\coordinate (t43) at (0,7);
\draw[kernels2] (t3) -- (root);
\draw[symbols] (t3) -- (t4);
\draw[kernels2,tinydots] (t4) -- (t41);
\draw[kernels2] (t4) -- (t42);
\draw[kernels2] (t4) -- (t43);
\node[not] (rootnode) at (root) {};
\node[not] (rootnode) at (t4) {};
\node[not] (rootnode) at (t3) {};
\node[var] (rootnode) at (t41) {\tiny{$ \bar{k} $}};
\node[var] (rootnode) at (t42) {\tiny{$ \bar{k}_1 $}};
\node[var] (rootnode) at (t43) {\tiny{$  k_{1} $}};
\end{tikzpicture}   \quad T_{1,2} =  \begin{tikzpicture}[scale=0.2,baseline=-5]
\coordinate (root) at (0,-1);
\coordinate (t3) at (0,1);
\coordinate (t4) at (0,3);
\coordinate (t41) at (-2,5);
\coordinate (t42) at (2,5);
\coordinate (t43) at (0,7);
\draw[kernels2] (t3) -- (root);
\draw[symbols] (t3) -- (t4);
\draw[kernels2,tinydots] (t4) -- (t41);
\draw[kernels2] (t4) -- (t42);
\draw[kernels2] (t4) -- (t43);
\node[not] (rootnode) at (root) {};
\node[not] (rootnode) at (t4) {};
\node[not] (rootnode) at (t3) {};
\node[var] (rootnode) at (t41) {\tiny{$ k_1 $}};
\node[var] (rootnode) at (t42) {\tiny{$ k_1 $}};
\node[var] (rootnode) at (t43) {\tiny{$  k $}};
\end{tikzpicture}  \quad T_{1,3} =  \begin{tikzpicture}[scale=0.2,baseline=-5]
\coordinate (root) at (0,-1);
\coordinate (t3) at (0,1);
\coordinate (t4) at (0,3);
\coordinate (t41) at (-2,5);
\coordinate (t42) at (2,5);
\coordinate (t43) at (0,7);
\draw[kernels2] (t3) -- (root);
\draw[symbols] (t3) -- (t4);
\draw[kernels2,tinydots] (t4) -- (t41);
\draw[kernels2] (t4) -- (t42);
\draw[kernels2] (t4) -- (t43);
\node[not] (rootnode) at (root) {};
\node[not] (rootnode) at (t4) {};
\node[not] (rootnode) at (t3) {};
\node[var] (rootnode) at (t41) {\tiny{$ k_1 $}};
\node[var] (rootnode) at (t42) {\tiny{$ k $}};
\node[var] (rootnode) at (t43) {\tiny{$  k_{1} $}};
\end{tikzpicture} 
\\ F_i  &= T_0 \cdot T_{1,i}, \, i \in \lbrace  1,2,3 \rbrace
 \end{equs} 
 where  $ \bar{k}_1 = - k_1 $ and $ \cdot  $ denotes the forest product. The $ F_i $ are decorated forests. One can observe that $ F_2 $ and $ F_3 $ are similar due to the symmetry of the decorated tree $ T_1 $. Hence, we only need to consider one of the decorated forests during the computations, we choose to consider $F_2$. Further, it follows in an analogous fashion from \eqref{ex:nls1} and \eqref{T1k} that there is a single nontrivial pairing when computing ${\mathbb{E}}|u^0_k(\tau,v^{\eta})|^2$, and that the decorated forest 
 $$F_0 = T_0 \cdot T_0$$
  encodes this pairing and consequently the term ${\mathbb{E}}|u^0_k(\tau,v^{\eta})|^2$.
 Therefore, the set $ \CG^0_k $ is defined as:
\begin{equs}
\CG^0_k = \lbrace T_0 \cdot T_0, T_0 \cdot T_{1,1}, T_0 \cdot T_{1,2}, \, k_1 \in \Z^d \rbrace.
\end{equs} 
 These decorated forests encode products of iterated integrals, where each iterated integral is encoded by a decorated tree of the form \eqref{ex:trees}. Namely, we have
 \begin{equs}
   (\bar{\Pi} T_0)(\tau) \,  \, (\Pi T_{1,1})(\tau) 
 & = e^{i \tau k^2} \left(  -i e^{-i\tau k^2} \int^{\tau}_{0} e^{i \xi k^2} e^{i \xi k^2} e^{-i \xi k_2^2} e^{-i \xi k_2^2} d \xi \right)
 \\ & = - i \int^{\tau}_{0} e^{2 i \xi ( k^2 - k_2^2)} d \xi ,
 \\
  (\bar{\Pi} T_0)(\tau) \,  \, (\Pi T_{1,2})(\tau) 
 & = e^{i \tau k^2} \left(  -i e^{-i\tau k^2} \int^{\tau}_{0} e^{i \xi k^2} e^{i \xi k_1^2} e^{-i \xi k^2} e^{-i \xi k_1^2} d \xi \right)
 \\ & = - i \tau .
 \end{equs}
 One observes that for $ F_2 $, due to cancellations in the frequencies, one can perform exact integration in time, and no approximation step is required. The term which requires to be approximated and which asks for some regularity on the initial data is the integral encoded by $ F_1 $. We now discuss how the approximations are made, and how it leads us to the tree series \eqref{schema_general}.
 
 Let us first proceed by describing the idea behind the approximation of $ \Pi T_1 $. 
The main idea of the general resonance based scheme introduced in \cite{BS} is to split the frequencies in the integrand into the dominant and lower parts:
\begin{equs}\label{resDecomp}
 \mathcal{L} & = k^2 + k_1^2 - k_2^2 -k_3^2 = \mathcal{L}_{\tiny{\text{dom}}} + \mathcal{L}_{\tiny{\text{low}}} \\\nonumber
 \mathcal{L}_{\tiny{\text{dom}}} & = 2 k_1^2, \quad \mathcal{L}_{\tiny{\text{low}}} = - k_1 (k_2 + k_3) + k_2 k_3
\end{equs}  
such that (cf. \eqref{T1k})
\begin{equs}\label{T1k2}
(\Pi T_1)(\tau)  
 =  -i e^{-i\tau k^2} \int^{\tau}_{0} e^{i \xi  \mathcal{L}_{\text{\tiny{dom}}}} e^{i \xi  \mathcal{L}_{\tiny{\text{low}}}} d \xi.
\end{equs}
The idea of the splitting \eqref{resDecomp} lies in the observation that $ k_1^2 $ asks for two derivatives on the initial data while the cross-terms given in $\mathcal{L}_{\text{\tiny{low}}}$   require only one. Moreover, the term $ e^{i \xi \mathcal{L}_{\text{\tiny{dom}}}} $ can be integrated out exactly and mapped back into physical space. In order to gain regularity, we will thus only   Taylor-expand the lower order part $  e^{i \xi \mathcal{L}_{\text{\tiny{low}}}}$  in \eqref{T1k2} while integrating  the dominant part $ e^{i \xi\mathcal{L}_{\text{\tiny{dom}}}} $ exactly. This provides the following discretisation:
\begin{equs}
(\Pi T_1 )(\tau) & = - i e^{-i \tau k^2} \int^{\tau}_0 e^{2i \xi k_1^2}(1 + \mathcal{O}(\xi \mathcal{L}_{\tiny{\text{low}}}  )) d \xi \\
& =   (\Pi^{1,0} T_1 )(\tau) + \mathcal{O}(\tau^2 \mathcal{L}_{\tiny{\text{low}}}  )
\\ (\Pi^{1,0} T_1 )(\tau) & = - i e^{-i \tau k^2} \int^{\tau}_0 e^{2i \xi k_1^2} d \xi 
\end{equs} 
where the index $ 0 $ correspond to the order of the approximation inside the time integral and the index $ 1 $ embeds that we ask a priori one derivative on the initial data. If we assume more regularity, such as for instance two derivatives, one can perform the full Taylor expansion and obtains:
\begin{equs}
(\Pi T_1 )(\tau) =  (\Pi^{2,0} T_1 )(\tau) + \mathcal{O}(\tau^2 \mathcal{L}  ), \quad (\Pi^{2,0} T_1 )(\tau) =  - i \tau e^{-i \tau k^2}.
\end{equs}
We would like to proceed in the same manner for the approximation of $ (\Pi T_{1,1})(\tau) $. However, we have to choose the splitting as follows:
\begin{equs}\label{Kd}
\mathcal{L} = 2 k^2 - 2 k_2^2, \quad \mathcal{L}_{\tiny{\text{low}}} = \mathcal{L}, \quad \mathcal{L}_{\tiny{\text{dom}}} = 0.
\end{equs}
This is due to the fact that  $  \frac{1}{k^2 - k_1^2} $ cannot be mapped back into Physical space and hence, we would  recover a scheme which needs to be computed fully in Fourier space and where we could not make use of the Fast Fourier Transform (FFT). This would not allow for a practical implementation of the scheme and would lead to high computational and memory costs, in particular in higher spatial dimensions $d\geq 3$. This motivates our choice of  dominant and lower order parts   \eqref{Kd} and leads to the discretization
\begin{equs}\label{ex:T11}
( \Pi T_{1,1} )(\tau) =  (\Pi^{1,0} T_{1,1} )(\tau) + \mathcal{O}(\tau^2 \mathcal{L}  ), \quad (\Pi^{1,0} T_{1,1} )(\tau) =  - i \tau e^{-i \tau k^2}.
\end{equs}
The discretization (c.f. \eqref{schema_general}) is then given by:
\begin{equs}
 \left( \bar{\Pi}^{1,0} T_0 \right)(\tau) \,\, \left( \Pi^{1,0} T_{1,j} \right)(\tau) = - i \tau. 
\end{equs}
The  term
\begin{equs}
\mathbb{E} \left( \bar{u}_k^0(\tau,v^{\eta}) u_k^1(\tau,v^{\eta}) \right)
\end{equs}
is then well approximated by
\begin{equs}
  \frac{\bar{\Upsilon}^{p}(T_0)(v^{\eta})}{S(T_0)} & \, \left(  \sum_{\substack{k_1 \in \Z^{d}} } \frac{\Upsilon^{p}(T_{1,1})(v^{\eta})}{S(T_{1,1})}  \CQ_{\leq 1} \left( \bar{\Pi} T_0 \right)(\tau) \left( \Pi T_{1,1} \right)(\tau)
  \right. 
  \\ & \left. + 2 \sum_{\substack{k_1 \in \Z^{d}} } \frac{\Upsilon^{p}(T_{1,2})(v^{\eta})}{S(T_{1,2})}  \CQ_{\leq 1} \left( \bar{\Pi} T_0 \right)(\tau) \left( \Pi T_{1,2} \right)(\tau)  \right)
  \\  & =  - 3 i\tau |v_k|^2 \sum_{k_1 \in \Z^d} |v_{k_1}|^2  
\end{equs}
and introduces the following local error
\begin{equs}
\mathcal{O}( \tau^2 \sum_{k_1 \in \Z^{d}} \left(  k^2 - k_1^2 \right) |v_k|^2  |v_{k_1}|^2 ).
\end{equs}
We notice that a term of the form $ k^2 |v_k|^2 $ corresponds (up to a sign) to the Fourier coefficient of $ \nabla v * \nabla\tilde{v} $ where $ * $ is the space convolution, and $\tilde{v}(x) = \overline{v(-x)}$.  Therefore, in contrast to \cite{BS}, a full Taylor expansion does not ask more than one derivative in space on the initial data. In fact, $ T_0 \cdot T_{1,1} $ is the term in $\CG_k^0$ to compute which requires the most regularity on the initial data, namely a factor of $k^2$ for a first order approximation, and which corresponds in the end to only one derivative due to the previously mentioned convolution structure. In most of the examples in this paper, we end up by considering only full Taylor expansions, as is made in \eqref{ex:T11}. This is however not always the case in a general setting. We give an example of a system of PDEs where a more careful resonance analysis (c.f. \eqref{resDecomp}) is needed to obtain an approximation at low regularity. Let us for instance consider the following system:
\begin{equs}\label{systK}
i \partial_t u + \Delta u  & = |u|^2 u,  \quad u(0) = v_1,\\
 \partial_t v + \partial_x^p v & = |u|^2 u,  \quad v(0) = v_2.
\end{equs}
Then depending on the value of $ p $, we may need a resonance based approach. Iterating Duhamel's formula, we get the following oscillatory integrals:
\begin{equs}
I = \int_0^{\tau} e^{-\xi (ik)^p} e^{-i\xi k_1^2} e^{i\xi k_2^2} e^{i\xi k_3^2} d \xi, \quad k=-k_1 + k_2 + k_3.
 \end{equs}
If we assume that $ p \geq 3 $, then $k^p$ is clearly the dominant term.  Let us compare the local errors depending on the choice of approximation.  Using a resonance based discretisation we obtain that
 \begin{itemize}
 \item Resonance scheme:
 \begin{equs}
 I = \int_0^{\tau} e^{-  \xi (ik)^p} d \xi +   \mathcal{O}( \xi^2 P_1), \quad P_1 = - k^2_1 + k^2_2 + k^2_3
 \end{equs} \end{itemize}
 whereas a classical discretisation leads to
  \begin{itemize}
 \item Classical integrators:
 \begin{equs}
 I = \xi +   \mathcal{O}( \xi^2 P_2), \quad P_2 = k^p - k^2_1 + k^2_2 + k^2_3.
 \end{equs}
 \end{itemize}
 As soon as $ p \geq 3 $, we ask for at least three derivatives on the initial data for a classical integrator (due to the error term $\mathcal{O}( \xi^2 P_2)$) while only requiring two derivatives for the resonance based scheme (thanks to the improved local error term $ \mathcal{O}( \xi^2 P_1)$).

\subsection{Outline of the paper}

Let us give a short review of the content of this paper.
In Section~\ref{section_decorated}, we introduce the combinatorial structures needed for describing oscillatory integrals coming from the iteration of the Duhamel's formula \eqref{duhamel_fourier}. We recall the framework introduced in \cite{BS} by first defining a suitable vector space of decorated trees $ \hat{\CT} $ and decorated forests $ \hat \CH $. Then, we consider  approximated decorated trees and forest which carry an extra decoration $ r $ at the root. It corresponds to the order of the approximation of the oscillatory integrals. The main novelty in this section are paired decorated forests which are specific decorated forests with extra constraints. They reflect the computation of the second moment of oscillatory integrals: pairing among the leaves of two trees correspond to the Wick formula applied to product of Gaussian random variables.

In Section~\ref{sec::3}, we construct the approximation of the iterated integrals given by the character $ \Pi : \hat \CH \rightarrow  \CC  $ (see \eqref{Pi}) through the character $ \Pi^n : \CH \rightarrow \CC $ (see \eqref{recursive_pi_r}) where $ n $ is the a priori regularity assumed on the initial data $ v $. The approximation $ \Pi^n $ is given via a recursive construction involving the operator  $ \CK $ given in Definition~\ref{CK}. In the definition, we are performing a full Taylor expansion which can be used in the examples mentioned in this paper. It also provides the optimal regularity. In the general case, the resonance approach could be needed.  The local error analysis which is the error estimate on the difference  between $ \Pi  $ and its approximation $ \Pi^n $ is given in  Theorem~\ref{approxima_tree_paired}.  It is computed via a recursive definition (see Definition~\ref{def:Llow}) with Taylor remainders of $ \Pi^n$ given in Lemma~\ref{Taylor_bound}. Let us mention that  Definition~\ref{def:Llow} is more involved as the one given in \cite{BS}, as the pairing among the leaves required to be more precise in the estimation of the local error.

In Section~\ref{sec::4}, we introduce truncated series of decorated trees that solve up to order $ r $ equation~\eqref{dis}.   Then, we compute the second moment of this series in Proposition~\ref{prop_moment_2} by using Wick's formula (see Proposition~\ref{Wick}). This is where we rely on paired decorated forests for describing the new series obtained. From this series, we  built another paired decorated forests series by replacing the character $ \Pi $ by its approximation $ \Pi^n $. Then, we can write  the general scheme \eqref{general_scheme_2}. In the end, we compute its local error structure (see Theorem~\ref{thm:genloc}) based on the local error between $ \Pi $ and $ \Pi^n $ for each paired decorated forest that appears in the expansion of the scheme.

In Section~\ref{sec:app}, we illustrate our general framework on  two fundamental examples: the nonlinear Schrodinger (NLS)  and the Korteweg--de Vries (KdV) equation. 
\subsection*{Acknowledgements}

{\small
This project has received funding from the European Research Council (ERC) under the European Union's Horizon 2020 research and innovation programme (grant agreement No. 850941). Y. B. thanks the Max Planck Institute for Mathematics in the Sciences (MiS) in Leipzig for supporting his research via a long stay in Leipzig from January to June 2022 where this work was written. Y. B. is funded by the ANR via the project LoRDeT (Dynamiques de faible régularité via les arbres décorés) from the projects call T-ERC\_STG. Y. A. B. thanks ICERM, and the Simons Fondation for their support, as well as the Max Planck Institute for Mathematics in the Sciences (MiS) in Leipzig for a short stay when this project was started.
}

\section{Decorated tree based structures for oscillatory iterated integrals}
\label{section_decorated}

In this section, we introduce the combinatorial structure for describing oscillatory integrals that stem from Duhamel's formula \eqref{duhamel_fourier}. We follow the formalism of decorated trees given in \cite{BS} which has been used for a low regularity approximation of \eqref{dis}. In order to describe $ \mathbb{E}(|u_k(t,v^{\eta})|^2) $ we need to introduce a new structure: paired decorated forests which are decorated forests in the sense of $ \cite{BS} $ satisfying extra  constraints on the decorations that encode some pairings. These pairings are coming from the Wick formula used for computing  $ \mathbb{E}(|u_k(t,v^{\eta})|^2) $. Such paired structures have been used in wave turbulence theory in more specific set-ups (see \cite{deng_hani_2021,deng2021derivation,ACG,staffilani2021wave}).

\subsection{Decorated trees}
We recall briefly the structure of decorated trees introduced in \cite[Sec. 2]{BS}.
We assume   a finite set $  \Lab$ and  frequencies $ k_1,...,k_m \in \Z^{d} $. The set $ \Lab$ parametrizes a set of differential
operators with constant coefficients, whose symbols are given by the polynomials  $ (P_{\Labhom})_{\Labhom \in \Lab} $.
We define the set of decorated trees $ \hat \CT  $ 
as elements of the form  $ 
T_{\Labe}^{\Labn, \Labo} =  (T,\Labn,\Labo,\Labe) $ where 
\begin{itemize}
\item $ T $ is a non-planar rooted tree with root $ \varrho_T $, node set $N_T$ and edge set $E_T$. We denote the leaves of $ T $ by $ L_T $. $ T $ must also be a planted tree which means that there is only one edge connecting the root to the rest of the tree.
\item the map $ \Labe : E_T \rightarrow \Lab \times \lbrace 0,1\rbrace$ encodes edge decorations. The set $ \{ 0,1\} $ encodes the action of taking the conjugate, and determines the sign of the frequencies at the top of this edge. Namely, we have that $1$ corresponds to a conjugate and to multiplying by $(-1)$ the frequency on the node above and adjacent to this edge.
\item the map $ \Labn : N_T \setminus \lbrace\varrho_T \rbrace \rightarrow \N $ encodes node decorations. For every inner node $ v$, this map encodes a monomial of the form $ \xi^{\Labn(v)} $ where $ \xi $ is a time variable.
\item the map $ \Labo : N_T \setminus \lbrace\varrho_T \rbrace \rightarrow \Z^{d}$ also encodes node decorations. These decorations are frequencies that satisfy for every inner node $ u $:
\begin{equs} \label{innerdecoration}
(-1)^{\mathfrak{p}(e_u)}\Labo(u)  = \sum_{e=(u,v) \in E_T} (-1)^{\mathfrak{p}(e)} \Labo(v)
\end{equs}
where $ \Labe(e) = (\Labhom(e),\mathfrak{p}(e)) $ and  $ e_u $ is the edge outgoing $ u $ of the form $ (v,u) $ . From this definition, one can see that the node decorations at the leaves $ (\Labo(u))_{u \in L_T} $ determine the decoration of the inner nodes. One can call this identity Kirchoff's law. We assume that the node decorations at the leaves are linear combinations of
the $ k_i $ with coefficients in $ \lbrace -1,0,1 \rbrace $. In applications, the leaves decoration are either $ k_i $ or $ - k_i $, the term $ - k_i $ is coming from a pairing with the Wick formula.
\item we assume that the root of $ T $ has no decoration.
\end{itemize}

When the node decoration $ \Labn $ is zero, we will denote the decorated trees $ T_{\Labe}^{\Labn,\Labo} $ as
 $ T_{\Labe}^{\Labo} = (T,\Labo,\Labe)  $. The set of decorated trees satisfying such a condition is denoted by $ \hat \CT_0 $. We set $\hat H $ (resp. $ \hat H_0 $) the (unordered) forests composed of trees in $ \hat \CT $ (resp. $ \hat \CT_0 $) with  linear spans $\hat \CH $ and  $ \hat \CH_0 $.  The forest product is denoted by $ \cdot $, the empty forest by $ \one $. Elements in $ \hat \CT $ are abstract representation of iterated time integrals and elements in $ \hat{H} $ are product of them.
 
We now introduce how one can represent uniquely decorated trees by using symbolic notations. We denote by $\CI_{o}$, an edge decorated by $o=(\mathfrak{t}, \mathfrak{p}) \in \Lab \times \lbrace 0,1\rbrace$. We introduce the operator $  \CI_{o}(\lambda_{k}^{\ell} \cdot) : \hat \CH \rightarrow \hat \CH $  that merges all the roots of the trees composing the forest into one node decorated by $(\ell,k) \in \N \times \Z^{d} $.  The new decorated tree is then grafted via an edge decorated by $ o $ onto a new root with no decoration. 
 If the condition $ \eqref{innerdecoration} $ is not satisfied on the argument then $\CI_{o}( \lambda_{k}^{\ell} \cdot)$ gives zero.
If $ \ell = 0 $, then the term $  \lambda_{k}^{\ell} $ is denoted by $  \lambda_{k} $ as a short hand notation for $  \lambda_{k}^{0} $.
 We have choosen to put no decorations at the root in order to define the operator $ \CI_{o}(\lambda_{k}^{\ell} \cdot) $. that will assign the decoration $ k $. For us, these decorated trees represent oscillatory iterated integrals and there is no need for having a decoration at the root for encoding them. 

The forest product between $ \CI_{o_1}(  \lambda^{\ell_1}_{k_1}F_1) $ and $ \CI_{o_2}(  \lambda^{\ell_2}_{k_2}F_2) $ is given by:
\begin{equs}
 \CI_{o_1}(  \lambda^{\ell_1}_{k_1} F_1) \CI_{o_2}(  \lambda^{\ell_2}_{k_2} F_2) := \CI_{o_1}( \lambda^{\ell_1}_{k_1} F_1) \cdot \CI_{o_2}( \lambda^{\ell_2}_{k_2} F_2).  
\end{equs}
Any decorated tree $ T $ is uniquely represented as 
\begin{equs}
T = \CI_{o}(  \lambda^{\ell}_{k} F), \quad F \in \hat H.
\end{equs}
Given an iterated integral, its size is given by the number of integrations in time. Therefore, we suppose given a subset  $ \Lab_+ $ of  $ \Lab  $ that encodes edge decorations which correspond to time integrals that we have to approximate.

\begin{example} \label{ex_1} We illustrate the definitions introduced above with decorated trees coming from the NLS equation. We consider the following decorated tree:
\begin{equs}
T =  \CI_{(\mathfrak{t}_1,0)}\Big(\lambda_{k}   \CI_{(\mathfrak{t}_2,0)}\left(\lambda_{k}    \CI_{(\mathfrak{t}_1,1)}(\lambda_{k_1}   ) \CI_{(\mathfrak{t}_1,0)}(\lambda_{k_2}   ) \CI_{(\mathfrak{t}_1,0)}(\lambda_{k_3}   )\right) \Big) =  \begin{tikzpicture}[scale=0.2,baseline=-5]
\coordinate (root) at (0,-1);
\coordinate (t3) at (0,1);
\coordinate (t4) at (0,3);
\coordinate (t41) at (-2,5);
\coordinate (t42) at (2,5);
\coordinate (t43) at (0,7);
\draw[kernels2] (t3) -- (root);
\draw[symbols] (t3) -- (t4);
\draw[kernels2,tinydots] (t4) -- (t41);
\draw[kernels2] (t4) -- (t42);
\draw[kernels2] (t4) -- (t43);
\node[not] (rootnode) at (root) {};
\node[not] (rootnode) at (t4) {};
\node[not] (rootnode) at (t3) {};
\node[var] (rootnode) at (t41) {\tiny{$ k_1 $}};
\node[var] (rootnode) at (t42) {\tiny{$ k_3 $}};
\node[var] (rootnode) at (t43) {\tiny{$  k_{2} $}};
\end{tikzpicture},
\end{equs}
where $ k = - k_1 + k_2 + k_3 $, $ \Lab = \lbrace \mathfrak{t}_1, \mathfrak{t}_2 \rbrace $, $ \Lab_+ = \lbrace \mathfrak{t}_2 \rbrace $,  $P_{\mathfrak{t}_1}(\lambda) = -\lambda^2$ and $P_{\mathfrak{t}_2}(\lambda) = \lambda^2$. For the graphical notation, we put the frequencies decorations only on the leaves that determine those on the inner nodes. An edge  $ \<thick> $ (resp. $ \<thick2> $) corresponds to a decoration $ (\mathfrak{t}_1,0) $ (resp. $(\mathfrak{t}_1,1) $) and the operator associated is $e^{i t P_{\mathfrak{t_1}}(k)} =  e^{-i t k^2}   $ (resp. $ e^{-i t P_{\mathfrak{t_1}}(k)} = e^{i t k^2} $), while an edge  $\<thin>$ (resp. $\<thin2>$) corresponds to a decoration $ (\mathfrak{t}_2,0) $ (resp. $ (\mathfrak{t}_2,1) $) associated to the integral $ -i\int^{\tau}_0 e^{i \xi P_{\mathfrak{t}_2}(k)} \cdots d \xi = -i\int^{\tau}_0 e^{i \xi k^2} \cdots d \xi   $ (resp. $ -i \int^{\tau}_0 e^{-i \xi P_{\mathfrak{t}_2}(k)} \cdots d \xi=  -i \int^{\tau}_0 e^{-i \xi k^2} \cdots d \xi $). Therefore, $ T $ is an abstract version of the following integral:
\begin{equs}
- i e^{-i\tau k^{2}} \int^{\tau}_0 e^{i \xi k^2} e^{i \xi k_1^2}  e^{-i \xi k_2^2}  e^{-i \xi k_3^2}d \xi.
\end{equs}
\end{example}

The next combinatorial structure, we recall from $ \cite{BS} $  is abstract versions of  a discretization of an oscillatory integral. We denote by $ \CT $ the set of decorated trees $ T_{\Labe,r}^{\Labn,\Labo} = (T,\Labn,\Labo,\Labe,r)  $ where
\begin{itemize}
\item $ T_{\Labe}^{\Labn,\Labo} \in \hat \CT $
\item The decoration of the root is given by $ r \in \Z $, $ r \geq -1 $ such that
\begin{equs} \label{condition_trees}
 r +1 \geq  \deg(T_{\Labe}^{\Labn,\Labo})
\end{equs}
 where $ \deg $ is defined recursively by 
 \begin{equs}
\deg(\one) & = 0, \quad \deg(F_1 \cdot F_2 )  = \max(\deg(F_1),\deg(F_2)),  \\
\deg(\CI_{(\Labhom,\Labp)}(  \lambda^{\ell}_{k}F_1)   ) & = \ell +   \one_{\lbrace\Labhom \in \Lab_+\rbrace}   +  \deg(F_1)
\end{equs}
where $ F_1, F_2  $ are forests composed of trees in $ \CT $. The quantity $\deg(T_{\Labe}^{\Labn,\Labo})$ is the maximum number of edges with type in $ \Lab_+ $, corresponding to time integrations, and of node decorations $ \Labn $ lying on the same path from one leaf to the root. 
\end{itemize}
We call decorated trees in $ \CT $ {\it approximated} decorated trees. We allow in the definition of the approximated decorated trees a new decoration at the root $ r $, which corresponds to the order of the approximation.
We now define the symbol $  \CI^{r}_{o}(\lambda_{k}^{\ell} \cdot) :  \CH \rightarrow  \CH $, which plays the same role as the previously defined symbol $ \CI^{r}_{o}(\lambda_{k}^{\ell} \cdot) $, with the added adjunction of the decoration $ r $ which constrains the time-approximations to be of order $r$. We now define a projection operator $\CD^{r}$ which depends on the order $r$ of the approximation and which is used during the construction of the numerical schemes in order to only retain the terms of order at most $r$.
We define the map $ \CD^{r} : \hat \CH \rightarrow \CH $ which assigns to the root of  a decorated tree  a decoration $ r $ and performs the projection along the identity \eqref{condition_trees}. It is given by
\begin{equs}\label{DR}
\CD^{r}(\one)= \one_{\lbrace 0 \leq  r+1\rbrace} , \quad \CD^r\left( \CI_{o}( \lambda_{k}^{\ell} F) \right) =  \CI^{r}_{o}( \lambda_{k}^{\ell} F)
\end{equs}
and we extend it multiplicatively to any forest in $ \hat \CH $. 

\begin{example} We illustrate the action of the map $ \CD^r $ on the decorated tree $ T $ introduced in Example~\ref{ex_1}. First, we notice that:
\begin{equs}
\deg(T) = 1, \quad \CD^r(T)  = 0, \, r > 1.
\end{equs}
Graphically, we just add $ r $ at the root:
\begin{equs}
 \CD^{r}\left( T \right) = \begin{tikzpicture}[scale=0.2,baseline=-5]
\coordinate (root) at (0,-1);
\coordinate (t3) at (0,1);
\coordinate (t4) at (0,3);
\coordinate (t41) at (-2,5);
\coordinate (t42) at (2,5);
\coordinate (t43) at (0,7);
\draw[kernels2] (t3) -- (root);
\draw[symbols] (t3) -- (t4);
\draw[kernels2,tinydots] (t4) -- (t41);
\draw[kernels2] (t4) -- (t42);
\draw[kernels2] (t4) -- (t43);
\node[not,label= {[label distance=-0.2em]below: \scriptsize  $ r $}] (rootnode) at (root) {};
\node[not] (rootnode) at (t4) {};
\node[not] (rootnode) at (t3) {};
\node[var] (rootnode) at (t41) {\tiny{$ k_1 $}};
\node[var] (rootnode) at (t42) {\tiny{$ k_3 $}};
\node[var] (rootnode) at (t43) {\tiny{$  k_{2} $}};
\end{tikzpicture}.
\end{equs}
\end{example}

\subsection{Paired decorated forests}
Our aim is to discretise the second moment of $ U_{k}^{r}(\tau, v^{\eta}) $ which can be expanded as a tree series. By applying the Wick product, one gets products of iterated  integrals whose initial data $ v^\eta $ are paired. At the level of the trees, this correspond to a pairing on the leaves. Therefore, one has to introduce a more rigid structure for encoding these new terms.
We define $ \hat{\CG} $ as the set of paired decorated forests $F$ such that:
\begin{itemize}
\item $F$ is a forest containing solely two trees denoted by $ T $ and $ \bar{T} $.
\item One has the following assumptions on the decoration $ \Labo $:
\begin{equs}\label{pairRoot}
(-1)^{\mathfrak{p}(e_{\rho_T}) }\Labo(u_T) = (-1)^{\mathfrak{p}(e_{\rho_{\bar{T}}})  } \Labo(u_{\bar{T}})
\end{equs}
where $ u_T $ (resp. $ u_{\bar{T}} $) are the nodes in T (resp. $ \bar{T} $) connected to $ \rho_{T} $ (resp. $ \rho_{\bar T} $). The decoration $\mathfrak{p}(e_{\rho_T})$ corresponds to the second decoration component on the edge $ e_{\rho_T} $ connecting the root $ \rho_T $ to $ u_T $.
\item
 For any leaf $ u \in L_F $, there exists exactly one leaf $  v \in L_F$ such that
\begin{equs}\label{pairFreq} \begin{aligned}
(-1)^{\mathfrak{p}(e_u)} \Labo(u) & = (-1)^{\mathfrak{p}(e_v)+1} \Labo(v), \quad \text{if } \lbrace u,v \rbrace \subset L_T \text{ or } \lbrace u,v \rbrace \subset L_{\bar T}
\\ (-1)^{\mathfrak{p}(e_u)} \Labo(u) & = (-1)^{\mathfrak{p}(e_v)} \Labo(v), \quad \text{otherwise}
\end{aligned}
\end{equs}
where $ e_u $ (resp. $ e_v $) is the edge connecting $ u $ (resp. $ v $) to the rest of the tree, and $\mathfrak{p}(e)\in \{0,1\}$ is the second component of the edge decoration associated to $e$. 
\end{itemize}
Approximated paired decorated forests $ F $ are described by 
\begin{equs}
F = \CD^r(\bar F), \quad r \in \Z, \, \bar{F} \in \hat{\CG}.
\end{equs}
We denote this set as $ \CG $ and by $ \CG_k^r $ decorated paired forests having at most $ r $ edges with type in $ \Lab_+ $ and both sides of the identity \eqref{pairRoot} are equal either to $ k $ or $ -k $. 

\begin{example} We provide an example of paired decorated forests:
\begin{equs}
 F = T_1 \cdot T_2, \quad T_1 = \begin{tikzpicture}[scale=0.2,baseline=-5]
\coordinate (root) at (0,1);
\coordinate (tri) at (0,-1);
\draw[kernels2] (tri) -- (root);
\node[var] (rootnode) at (root) {\tiny{$ k $}};
\node[not] (trinode) at (tri) {};
\end{tikzpicture} ,   \quad T_2 =  \begin{tikzpicture}[scale=0.2,baseline=-5]
\coordinate (root) at (0,-1);
\coordinate (t3) at (0,1);
\coordinate (t4) at (0,3);
\coordinate (t41) at (-2,5);
\coordinate (t42) at (2,5);
\coordinate (t43) at (0,7);
\draw[kernels2] (t3) -- (root);
\draw[symbols] (t3) -- (t4);
\draw[kernels2,tinydots] (t4) -- (t41);
\draw[kernels2] (t4) -- (t42);
\draw[kernels2] (t4) -- (t43);
\node[not] (rootnode) at (root) {};
\node[not] (rootnode) at (t4) {};
\node[not] (rootnode) at (t3) {};
\node[var] (rootnode) at (t41) {\tiny{$ k_1 $}};
\node[var] (rootnode) at (t42) {\tiny{$ k $}};
\node[var] (rootnode) at (t43) {\tiny{$  k_{\tiny{1}} $}};
\end{tikzpicture}   
\end{equs}
Let $u$ denote the single leaf in the tree $T_1$, and let $\tilde{u}, \tilde{v}$, $v$ denote the leaves, from left to right, in the tree $T_2$. We have that $\Labo(e_u) = \Labo(e_{v}) = k$, and $\Labo(e_{\tilde{u}}) = k_1 = \Labo(e_{\tilde{v}})$. Moreover, $\mathfrak{p}(e_{u})= \mathfrak{p}(e_{\tilde{v}})=\mathfrak{p}(e_{v}) = 0$ and $\mathfrak{p}(e_{\tilde{u}}) = 1$ since the edge $e_{\tilde{u}}$ is a dotted brown edge, which encodes that the frequency is preceded by a negative sign.
Hence, from \eqref{pairFreq} we read:
 \begin{equs} 
(-1)^1 k_1 = (-1)^1 k_1 ,  \quad (-1)^0 k = (-1)^0k.
 \end{equs} 
 For the first condition~\eqref{pairRoot}, one has $ \mathfrak{p}(e_{\rho_{T_1}}) =  0  $ and $ \mathfrak{p}(e_{\rho_{T_2}}) =  0 $.  Therefore, one similarly obtains
 \begin{equs}
 (-1)^{0 } k = (-1)^{0} k.
 \end{equs}
\end{example}

%
\begin{remark} In the above definition, paired decorated forests are formed of only two trees with pairings among the leaves. The pairings among the leaves reflect the computation of the second moment of products of Gaussian random variables, dictated by Wick's theorem. One can easily generalise this definition depending on one needs. Indeed, first instead of computing second order moments, one can 
look at higher order moments, namely moments of order $ 2p $, and consider forests consisting of $ 2p $ trees. The pairing on the leaves between the $2p$ trees would follow the same condition~\eqref{pairRoot}. Secondly, one can consider non-Gaussian random initial conditions together with the use of cumulants to compute second moments, see Remark \ref{Gaus-ic}. In this case, the above definition can be replaced by pairings between bigger clusters of leaves and one has to change \eqref{pairFreq} accordingly, by including more than two leaves. 
\end{remark}

\section{Discretizing oscillatory iterated integrals}

\label{sec::3}

In this section, we recall the definition of the map $ \Pi : \hat{\CH} \rightarrow \CC $  that interprets decorated forests as oscillatory integrals and its low regularity discretization $ \Pi^n $ introduced both in \cite{BS}. The main simplification in comparison to the resonance method exposed is the definition of  $ \CK_{o_2}^{k,r} $ which is now performing the full Taylor expansion.
This is due to the convolution structures observed in the computation of $ \mathbb{E}(|u_k(\tau,v^{\eta})|^2) $ (see remark~\ref{rem_reg}). This will be enough for the examples treated in this paper but in general a resonance analysis could be needed. The local error analysis relies on the error introduced by the operator $ \CK_{o_2}^{k,r} $ via Lemma~\ref{Taylor_bound}. This error is propagated through a recursive definition of the error  namely $ \mathcal{L}^{r}_{\text{\tiny{low}}}(F,n) $. This step is more involved in comparison to $ \cite{BS} $. Indeed, the error introduced by the approximation of a product of iterated integrals needs a more careful treatment, see the new formulation in Definition~\ref{def:Llow}.

We adopt in this section the following notations: an element of $ \Lab_+ $ (resp, $ \Lab_+ \times \lbrace 0,1\rbrace $) is denoted by $ \Labhom_2 $ (resp. $ o_2 $) and an element of $ \Lab \setminus \Lab_+ $ (resp.  $ \Lab \setminus \Lab_+ \times \lbrace  0,1 \rbrace $) is denoted by $ \Labhom_1 $ (resp. $ o_1 $). 
We define $ \CC $ as  the space of functions of the form $ z \mapsto \sum_j Q_j(z)e^{i z P_j(k_1,...,k_m) } $ where the $ Q_j(z) $ are polynomials in $ z $ and the $ P_j $ are polynomials in $ k_1,...,k_n \in \Z^{d} $. The $ Q_j $ may also depend on  $ k_1,...,k_m $. Equipped with the pointwise product $ \CC $ is an algebra.
 Iterated integrals and their discretisation will be characters from decorated forests into $ \CC $.
For a character $ g : \hat \CH \rightarrow \CC  $, one has:
\begin{equs}
g(F \cdot \bar F) = g(F) g(\bar F), \quad F, \bar{F} \in \hat \CH.
\end{equs}
We define the character $ \Pi : \hat \CH \rightarrow \mathcal{C} $  by
\begin{equation}\label{Pi}
\begin{aligned}
\Pi \left( F \cdot \bar F \right)(\tau) & =  ( \Pi F)(\tau) ( \Pi \bar F )(\tau),
\\
\Pi \left( \CI_{o_1}( \lambda_k^{\ell} F)\right)(\tau) & = 
 e^{i \tau P_{o_1}(k)} \tau^{\ell} (\Pi F)(\tau),
\\
 \Pi \left(  \CI_{o_2}( \lambda_k^{\ell} F)\right)(\tau) & = -i  \vert \nabla\vert^{\alpha} (k)
\int_{0}^{\tau} e^{i \xi P_{o_2}(k)} \xi^{\ell}(\Pi F)(\xi) d \xi, 
\end{aligned}
\end{equation}
where $ F, \bar F \in \hat \CH $. We have used the short hand notation $ P_{o_1} $ given for $ o_1 = (\mathfrak{t}_1,\mathfrak{p}_1) $ by:
\begin{equs}
P_{o_1}(k) = (-1)^{\mathfrak{p}_1} P_{\mathfrak{t}_1}((-1)^{\mathfrak{p}_1}  k).
\end{equs}
For the discretisation, we consider a new family of characters defined now on $ \CH $ and parametrized by $ n \in \N $:
\begin{equation} \label{recursive_pi_r}
\begin{aligned}
\Pi^n \left( F \cdot \bar F \right)(\tau) & = 
\left( \Pi^n F  \right)(\tau)  \left( \Pi^n  \bar F \right)(\tau), \quad (\Pi^n  \lambda^{\ell})(\tau) = \tau^{\ell}, \\
\left(\Pi^n  \CI^{r}_{o_1}( \lambda_{k}^{\ell}  F )\right)(\tau)   & =\tau^{\ell} e^{i \tau P_{o_1}(k)} \left(\Pi^n \CD^{r-\ell}(F)\right)(\tau),  \\
\left( \Pi^n \CI^{r}_{o_2}( \lambda^{\ell}_k F) \right) (\tau) & = \CK^{k,r}_{o_2} \left(  \Pi^n \left( \lambda^{\ell} \CD^{r-\ell-1}(F) \right),n \right)(\tau).
\end{aligned}
\end{equation}
In the sequel, we will use the short hand notations:
\begin{equs}
\Pi^{n,r} = \Pi^n \CD^r, \quad (\bar{\Pi} T)(\tau) = \overline{(\Pi T)}(\tau), \quad (\bar{\Pi}^{n,r} T)(\tau) = \overline{(\Pi^{n,r} T)}(\tau).
\end{equs}
We provide the definition of the approximation operator $\CK  $ below, when one performs the full Taylor expansion of the oscillatory integrals.

\begin{definition}\label{CK}
Assume that $ G:  \xi \mapsto \xi^{q} e^{i \xi P(k_1,...,k_m)} $ where $ P $ is a polynomial in the frequencies $ k_1,...,k_m $ and let $ o_2 =  (\Labhom_2,p) \in \Lab_+ \times \lbrace 0,1 \rbrace$ and $ r \in \N $. Let $ k $ be a linear map in $ k_1,...,k_m $ using coefficients in $ \lbrace -1,0,1 \rbrace $. We set
\begin{equs} \label{exp_CK}
\CK_{o_2}^{k,r}(G,n)(\tau) =-i|\nabla|^{\alpha}(k)\sum_{\ell \leq r-q}  \frac{\tau^{\ell+q+1}}{\ell!(\ell+q+1)} \left(i  P_{o_2}(k) + i P(k_1,...,k_m) \right)^{\ell}.\qquad
\end{equs}
\end{definition}

\begin{remark} \label{rem_reg} The definition \eqref{exp_CK} for $ \CK $ is quite simple and encodes the approximation of the oscillatory integral 
$$
-i|\nabla|^{\alpha}(k)\int_0^\tau e^{i\xi P_{o_{2}}(k)}G(\xi)d\xi
$$
by a full Taylor expansion of the operator appearing in the integrand.
This approximation does not require a careful resonance based analysis as was the case in \cite[Def. 3.1]{BS}. 
This is due to the fact that we are not approximating $u_k(\tau)$ but its second moment $\mathbb{E}(|u_k(\tau,v^{\eta})|^2)$. Therefore, our local error terms involve products of two Fourier coefficients. This allows us to repartition half of the regularity on each of the Fourier coefficients, thereby obtaining a similar low-regularity local error structure as in \cite{BS} without requiring a more delicate analysis.
Indeed, to each of the frequencies $ k_1,...,k_m $ we now consider terms of the form  $ v_{k_i}^2 $, $ |v_{k_i}| $ or $ \bar{v}_{k_i} v_{k_i} $, instead of $ v_{k_i} $ or $ \bar{v}_{k_i} $. The new  quadratic terms can be interpreted as convolutions or are better summable. In the case of the convolution, the two following terms ask for the same regularity:
\begin{equs}
k^2 v_{k}^2 \equiv \partial_x v * \partial_x v, \quad k v_k \equiv \partial_x v
\end{equs}
where $ * $ is the spatial convolution. One can see the difference between the discretization presented here and the one in \cite{BS} by considering the following oscillatory integral coming from NLS:
\begin{equs}
I = \int_0^{\tau} e^{-i\xi k^2} e^{-i\xi k_1^2} e^{i\xi k_2^2} e^{i\xi k_3^2} d \xi, \quad k=-k_1 + k_2 + k_3.
\end{equs}
The idea of the resonance analysis is to split the integrand into the dominant and lower order parts (cf.\eqref{resDecomp})
\begin{equs}
 \mathcal{L} & = k^2 + k_1^2 - k_2^2 -k_3^2 = \mathcal{L}_{\tiny{\text{dom}}} + \mathcal{L}_{\tiny{\text{low}}} \\\nonumber
 \mathcal{L}_{\tiny{\text{dom}}} & = 2 k_1^2, \quad \mathcal{L}_{\tiny{\text{low}}} = - k_1 (k_2 + k_3) + k_2 k_3
\end{equs}  
 and to Taylor expand only the term of lower degree $e^{i \xi \mathcal{L}_{\tiny{\text{low}}} }$. This yields the following discretisation
\begin{equs}
I  = \int_0^{\tau} e^{- 2i \xi k_1^2} \left( 1  + \mathcal{O}( \xi P_1) \right) d \xi, \quad P_1  = k_1 (k_2 +k_3) - k_2 k_3 
\end{equs}
with
\begin{equs}
I  = \int_0^{\tau} e^{- 2i \xi k_1^2} d \xi +   \mathcal{O}( \tau^2 P_1) .
\end{equs}
In contrast, a full classical Taylor expansion gives
\begin{equs}
I  = \int_0^{\tau} e^{- 2i \xi k_1^2} d \xi +   \mathcal{O}( \tau^2 P_2), \quad P_2 = - 2 k^2_1 + k_1(k_2 + k_3) - k_2 k_3.
\end{equs}
The difference between the two approximations is that the second one asks for two derivatives instead of one. In our setting, the latter is, however, absorbed by the convolution of initial data.
\end{remark}

\begin{remark}\label{rem:sysK} For many equations, the main difference between resonance based and classical integrators lies in the  gain of one derivative (at order one). Therefore, there is no need to carry out a  resonance based discretization for $ \mathbb{E}(|u_k(\tau,v^{\eta})|^2) $ due to the improved resonance structure of the latter. Resonance based schemes are, however,  needed if the gap between dominant and lower parts of the resonance is bigger. Then, the full definition \cite[Def. 3.1]{BS} is required. This can happen when one looks at systems of dispersive PDEs as for instance system \eqref{systK}.

\end{remark}
\begin{example}
With the aid of \eqref{Pi}, one can compute recursively the following oscillatory integrals arising in the cubic NLS equation~\eqref{nlsIntro}
\begin{equs}
  (\Pi  \begin{tikzpicture}[scale=0.2,baseline=-5]
\coordinate (root) at (0,1);
\coordinate (tri) at (0,-1);
\draw[kernels2] (tri) -- (root);
\node[var] (rootnode) at (root) {\tiny{$ k_2 $}};
\node[not] (trinode) at (tri) {};
\end{tikzpicture}) (\tau) &  = (\Pi \CI_{(\mathfrak{t}_1,0)}(\lambda_{k_2}))(\tau) =  e^{-i \tau k_2^2}, \\ (\Pi  \begin{tikzpicture}[scale=0.2,baseline=-5]
\coordinate (root) at (0,1);
\coordinate (tri) at (0,-1);
\draw[kernels2,tinydots] (tri) -- (root);
\node[var] (rootnode) at (root) {\tiny{$ k_1 $}};
\node[not] (trinode) at (tri) {};
\end{tikzpicture}) (\tau) & =  (\Pi \CI_{(\mathfrak{t}_1,1)}(\lambda_{k_1}))(\tau) = e^{i \tau k_1^2}, \\
(\Pi \begin{tikzpicture}[scale=0.2,baseline=-5]
\coordinate (root) at (0,-1);
\coordinate (t1) at (-2,1);
\coordinate (t2) at (2,1);
\coordinate (t3) at (0,2);
\draw[kernels2,tinydots] (t1) -- (root);
\draw[kernels2] (t2) -- (root);
\draw[kernels2] (t3) -- (root);
\node[not] (rootnode) at (root) {};t
\node[var] (rootnode) at (t1) {\tiny{$ k_{\tiny{1}} $}};
\node[var] (rootnode) at (t3) {\tiny{$ k_{\tiny{2}} $}};
\node[var] (trinode) at (t2) {\tiny{$ k_3 $}};
\end{tikzpicture}  )(\tau) & = (\Pi  \begin{tikzpicture}[scale=0.2,baseline=-5]
\coordinate (root) at (0,1);
\coordinate (tri) at (0,-1);
\draw[kernels2,tinydots] (tri) -- (root);
\node[var] (rootnode) at (root) {\tiny{$ k_1 $}};
\node[not] (trinode) at (tri) {};
\end{tikzpicture}) (\tau) \, (\Pi  \begin{tikzpicture}[scale=0.2,baseline=-5]
\coordinate (root) at (0,1);
\coordinate (tri) at (0,-1);
\draw[kernels2] (tri) -- (root);
\node[var] (rootnode) at (root) {\tiny{$ k_2 $}};
\node[not] (trinode) at (tri) {};
\end{tikzpicture}) (\tau) \, (\Pi  \begin{tikzpicture}[scale=0.2,baseline=-5]
\coordinate (root) at (0,1);
\coordinate (tri) at (0,-1);
\draw[kernels2] (tri) -- (root);
\node[var] (rootnode) at (root) {\tiny{$ k_3 $}};
\node[not] (trinode) at (tri) {};
\end{tikzpicture}) (\tau)=  e^{i \tau (k_1^2 - k_2^2 - k_3^2)},
\\ ( \Pi \begin{tikzpicture}[scale=0.2,baseline=-5]
\coordinate (root) at (0,0);
\coordinate (tri) at (0,-2);
\coordinate (t1) at (-2,2);
\coordinate (t2) at (2,2);
\coordinate (t3) at (0,3);
\draw[kernels2,tinydots] (t1) -- (root);
\draw[kernels2] (t2) -- (root);
\draw[kernels2] (t3) -- (root);
\draw[symbols] (root) -- (tri);
\node[not] (rootnode) at (root) {};t
\node[not] (trinode) at (tri) {};
\node[var] (rootnode) at (t1) {\tiny{$ k_{\tiny{1}} $}};
\node[var] (rootnode) at (t3) {\tiny{$ k_{\tiny{2}} $}};
\node[var] (trinode) at (t2) {\tiny{$ k_3 $}};
\end{tikzpicture}) (\tau) & = -i \int^{\tau}_0 e^{is (-k_1 + k_2 + k_3)^2} e^{i s (k_1^2 - k_2^2 - k_3^2)} ds.
  \end{equs}
  For a second-order order approximation of the above integral, one has the following discretisation which in the end asks for two derivatives on the initial data,
  \begin{equs}
  ( \Pi^{2,1} \begin{tikzpicture}[scale=0.2,baseline=-5]
\coordinate (root) at (0,0);
\coordinate (tri) at (0,-2);
\coordinate (t1) at (-2,2);
\coordinate (t2) at (2,2);
\coordinate (t3) at (0,3);
\draw[kernels2,tinydots] (t1) -- (root);
\draw[kernels2] (t2) -- (root);
\draw[kernels2] (t3) -- (root);
\draw[symbols] (root) -- (tri);
\node[not] (rootnode) at (root) {};t
\node[not] (trinode) at (tri) {};
\node[var] (rootnode) at (t1) {\tiny{$ k_{\tiny{1}} $}};
\node[var] (rootnode) at (t3) {\tiny{$ k_{\tiny{2}} $}};
\node[var] (trinode) at (t2) {\tiny{$ k_3 $}};
\end{tikzpicture}) (\tau)  =- i \tau + 2\tau^2 \left(  k_1^2 - k_1(k_2+k_3) + k_2 k_3 \right).
  \end{equs}
\end{example}

We recall \cite[Lemma 3.3]{BS} for a function $ G $ as given in Definition~\ref{CK}.

\begin{lemma} \label{Taylor_bound}  We keep the notations of Definition~\ref{CK}. We suppose that $   q \leq r$ then one has
\begin{equs}
- i \vert \nabla\vert^{\alpha} (k) \int_{0}^{\tau} \xi^{q}  e^{i \xi \left(  P_{o_2}(k) + P(k_1,...,k_m) \right)}  d\xi -\CK^{k,r}_{o_2} (  { G},n)(\tau) = \CO(\tau^{r+2} R_{o_2,n}^{k,r}(G))
\end{equs} 
where $ R_{o_2,n}^{k,r}(G)$ depends on $ n $, $ \alpha$ and the frequencies $k_1,...,k_m  $. When $\CK^{k,r}_{o_2} $ performs a full Taylor expansions, it is given by
\begin{equs}\label{R_fullTaylor}
(P_{o_2}(k) + P(k_1,...,k_m))^{r+1-q} k^{\alpha}.
\end{equs}
Otherwise, one considers the resonance analysis and decomposes $ P_{o_2} + P $ into 
\begin{equs}
P_{o_2} + P = \CL_{\text{\tiny{dom}}} + \CL_{\text{\tiny{low}}}
\end{equs}
 and one has
 \begin{equs}
  R_{o_2,n}^{k,r}(G) = k^{\bar n}, \quad \bar{n} = \max(n, \alpha + \deg( \CL_{\text{\tiny{low}}}^{r-q+1})).
 \end{equs}
\end{lemma}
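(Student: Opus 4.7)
The plan is to carry out a direct Taylor expansion of the integrand and bound the remainder. Set $Q := P_{o_2}(k) + P(k_1,\dots,k_m)$, so that the exact integral we want to approximate is
\begin{equation*}
I(\tau) \;=\; -i|\nabla|^{\alpha}(k) \int_{0}^{\tau} \xi^{q}\, e^{i\xi Q}\, d\xi .
\end{equation*}
In the full-Taylor-expansion case (which is the content of Definition~\ref{CK}), I would expand $e^{i\xi Q}$ about $\xi = 0$ to order $r-q$, using Taylor's theorem with integral remainder:
\begin{equation*}
e^{i\xi Q} \;=\; \sum_{\ell=0}^{r-q} \frac{(i\xi Q)^{\ell}}{\ell!} \;+\; \frac{(i\xi Q)^{r-q+1}}{(r-q)!}\int_0^1 (1-s)^{r-q} e^{is\xi Q}\, ds .
\end{equation*}

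Next I would multiply by $\xi^q$, integrate term-by-term over $[0,\tau]$, and use $\int_0^\tau \xi^{\ell+q} d\xi = \tau^{\ell+q+1}/(\ell+q+1)$ to recognise the main sum as exactly $\CK^{k,r}_{o_2}(G,n)(\tau)$ defined in \eqref{exp_CK}. The remainder contributes
\begin{equation*}
-i|\nabla|^{\alpha}(k) \int_{0}^{\tau} \xi^{q+r-q+1} \cdot \mathcal{O}(Q^{r-q+1})\, d\xi \;=\; \mathcal{O}\!\left(\tau^{r+2}\, k^{\alpha}\, Q^{r-q+1}\right),
\end{equation*}
which matches the stated bound with $R^{k,r}_{o_2,n}(G) = (P_{o_2}(k)+P(k_1,\dots,k_m))^{r+1-q} k^{\alpha}$ as in \eqref{R_fullTaylor}. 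The hypothesis $q \le r$ is what ensures that the sum defining $\CK$ is non-empty and that the remainder starts at order $\tau^{r+2}$ rather than at a lower order.

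For the resonance-based case one keeps the dominant oscillation intact: write $Q = \CL_{\text{\tiny{dom}}} + \CL_{\text{\tiny{low}}}$, factor $e^{i\xi Q} = e^{i\xi \CL_{\text{\tiny{dom}}}} \cdot e^{i\xi \CL_{\text{\tiny{low}}}}$, and Taylor-expand only the lower-order factor to order $r-q$. The integrand then becomes $\xi^{q+\ell} e^{i\xi \CL_{\text{\tiny{dom}}}} \CL_{\text{\tiny{low}}}^{\ell}$, whose antiderivatives are the building blocks of the corresponding resonance variant of $\CK$ (as in \cite[Def.~3.1]{BS}). The Taylor remainder is again of order $\tau^{r+2}$, but now its frequency cost is dictated by $\CL_{\text{\tiny{low}}}^{r-q+1}$ rather than $Q^{r-q+1}$; combined with the $k^{\alpha}$ coming from $|\nabla|^{\alpha}(k)$ and the a priori regularity $n$ assumed on the initial data, this gives $R^{k,r}_{o_2,n}(G) = k^{\bar n}$ with $\bar n = \max(n,\alpha + \deg(\CL_{\text{\tiny{low}}}^{r-q+1}))$.

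The only subtle point is bookkeeping: one must check that the $\ell=0,\dots,r-q$ terms produced by the Taylor expansion really assemble into the closed formula \eqref{exp_CK} (in particular the combinatorial factor $1/(\ell!(\ell+q+1))$), and that the remainder, after multiplication by $\xi^q$ and integration against $d\xi$, produces the correct power $\tau^{r+2}$ rather than something larger. Both are straightforward once the indices are tracked carefully; no combinatorics beyond Taylor's theorem is needed.
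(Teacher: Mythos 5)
The paper does not prove this lemma itself; it is recalled verbatim from \cite[Lemma 3.3]{BS}, so there is no in-paper argument to compare against. Your plan is the standard one and it is correct: expanding $e^{i\xi Q}$ to order $r-q$ with integral remainder, multiplying by $\xi^{q}$ and integrating does reproduce exactly the sum \eqref{exp_CK} (the factor $1/(\ell!(\ell+q+1))$ comes from $(iQ)^{\ell}/\ell!$ times $\int_0^\tau\xi^{\ell+q}\,d\xi$), and the remainder is bounded by $\tau^{r+2}k^{\alpha}Q^{r-q+1}$ as claimed, with $q\le r$ guaranteeing the expansion order is nonnegative. Your sketch of the resonance case likewise matches the mechanism in \cite[Def.~3.1, Lemma 3.3]{BS}; the only point left implicit is precisely where the $n$ in $\bar n=\max(n,\alpha+\deg(\CL_{\text{\tiny{low}}}^{r-q+1}))$ enters (it accounts for the regularity spent when the integrated dominant part $e^{i\xi\CL_{\text{\tiny{dom}}}}$ cannot be mapped back to physical space and is instead controlled by the a priori assumption $v\in H^n$), but this is inherited from \cite{BS} and not something the present paper re-derives.
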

We define inductively the term $ (\Pi \cdot)_0  $ on decorated forests:
\begin{equs}
\Pi \left( F \cdot \bar F \right)_0  & =  ( \Pi F)_0 ( \Pi \bar F )_0,
\quad
\Pi \left( \CI_{o_1}( \lambda_k^{\ell} F)\right)_0  =  (\Pi F)_0,
\\
 \Pi \left(  \CI_{o_2}( \lambda_k^{\ell} F)\right)_0 & =  \vert \nabla\vert^{\alpha} (k)
(\Pi F)_0 .
\end{equs}
For every decorated forest, there exists $ c_{F} $ independent of the frequencies $ k_1,...,k_m $ such that:
\begin{equs}
|(\Pi F)(\tau)| \leq  c_{F} \tau^{\hat{n}_+(F)} (\Pi F)_0
\end{equs}
where $ \hat{n}_+(F) = n_+(F) + \sum_{u \in N_F} \Labn(u) $,  $ n_+(F) $ gives the number of edges with decoration $ o_2 = (\mathfrak{t}_2,\mathfrak{p}_2) $ with $ \mathfrak{t}_2 \in \Lab_+ $. In the sequel, we will use the following notation: $\Pi^{n,r} = \Pi^n  \CD^r$. Given a decorated forest, we set $ (\Pi^{n,r} F)_{\ell} $ to be the term of order $ \tau^{\ell}  $ in $ (\Pi^{n,r} F)(\tau) $ in the sense that one has:
\begin{equs}
\mathcal{Q}_{\leq r +1}(\Pi^{n,r} F)(\tau) = \sum_{ \hat{n}_+(F)\leq \ell \leq r +1} (\Pi^{n,r} F)_\ell(\tau) \tau^{\ell}
\end{equs}
where each $(\Pi^{n,r}F)_{\ell}(\tau)$ is bounded, by denoting the bound by $(\Pi^{n,r}F)_{\ell}$, we have $\forall \tau, \ (\Pi^{n,r}F)_{\ell}(\tau)\le (\Pi^{r,n}F)_{\ell}$.

\begin{definition}\label{def:Llow}
Let $ n \in \N $, $ r \in \Z $. We recursively define $ \mathcal{L}^{r}_{\text{\tiny{low}}}(\cdot,n)$ as
\begin{equs}
\mathcal{L}^{r}_{\text{\tiny{low}}}(F,n) = 1, \quad r < 0.
\end{equs}
Else
\begin{equs}
\mathcal{L}^{r}_{\text{\tiny{low}}}(\one,n) & = 1, \quad \mathcal{L}^{r}_{\text{\tiny{low}}}(\CI_{o_1}( \lambda_{k}^{\ell}  F ),n)  = \mathcal{L}^{r-\ell}_{\text{\tiny{low}}}(  F,n ) 
\\
\mathcal{L}^{r}_{\text{\tiny{low}}}(F_1 \cdot  F_2,n) &  = 
 (\Pi F_2)_0 \,  \mathcal{L}^{r -\hat{n}_+(F_2)}_{\text{\tiny{low}}}(F_1,n ) \\ & + \sum_{\ell \leq r + 1-\hat{n}_+(F_2)} (\Pi^{n,r-\hat{n}_+(F_2)} F_1)_{\ell} \mathcal{L}^{r-\ell}_{\text{\tiny{low}}}( F_2,n) \\
\mathcal{L}^{r}_{\text{\tiny{low}}}(\CI_{o_2}( \lambda^{\ell}_{k}  F ),n) &  =   k^{\alpha} \mathcal{L}^{r-\ell-1}_{\text{\tiny{low}}}(  F,n ) +  R_{o_2,n}^{k,r}(\xi^{\ell}(\Pi^{n,r-\ell-1} F) (\xi)).
\end{equs}
\end{definition}

\begin{example}
		We illustrate  Definition~\ref{def:Llow} on the following paired trees stemming from the NLS equation:
		\begin{equs}\label{NLS_T2_bis}
			F_2 =  T_1 \cdot T_2, 
				\quad T_1 = \CI_{o_1}(\lambda_k) = \begin{tikzpicture}[scale=0.2,baseline=-5]
					\coordinate (root) at (0,1);
					\coordinate (tri) at (0,-1);
					\draw[kernels2] (tri) -- (root);
					\node[var] (rootnode) at (root) {\tiny{$ k $}};
					\node[not] (trinode) at (tri) {};
				\end{tikzpicture}, \quad T_2 = \begin{tikzpicture}[scale=0.2,baseline=-5]
					\coordinate (root) at (0,-1);
					\coordinate (t3) at (0,1);
					\coordinate (t4) at (0,3);
					\coordinate (t41) at (-2,5);
					\coordinate (t42) at (2,5);
					\coordinate (t43) at (0,7);
					\draw[kernels2] (t3) -- (root);
					\draw[symbols] (t3) -- (t4);
					\draw[kernels2,tinydots] (t4) -- (t41);
					\draw[kernels2] (t4) -- (t42);
					\draw[kernels2] (t4) -- (t43);
					\node[not] (rootnode) at (root) {};
					\node[not] (rootnode) at (t4) {};
					\node[not] (rootnode) at (t3) {};
					\node[var] (rootnode) at (t41) {\tiny{$ \bar{k} $}};
					\node[var] (rootnode) at (t42) {\tiny{$ k_{\tiny{2}} $}};
					\node[var] (rootnode) at (t43) {\tiny{$  \bar{k}_{\tiny{2}} $}};
				\end{tikzpicture}, 
			\end{equs}
			where $ \bar{k} = -k $. We have,
			\begin{equs}\label{L_low2}
				\CL_{\text{\tiny{low}}}^0(T_1\cdot {T_2},1) = (\Pi T_2)_{0} \CL_{\text{\tiny{low}}}^{-1}(T_1,1) &+ (\Pi^{1,-1}T_1)_{0} \CL_{\text{\tiny{low}}}^0({T_2},1),
			\end{equs}
			where $\Pi^{1,-1} = \Pi^{1}\CD^{-1} = \Pi^1$, $(\Pi^{1,-1}T_1)_{0} = e^{-i\tau k^2}$, $ \CL_{\text{\tiny{low}}}^{-1}(T_1,1) = 1$  and $(\Pi T_2)_{0} = 1$. Next, we have
			$$
			\CL_{\text{\tiny{low}}}^0({T_2},1) = \CL_{\text{\tiny{low}}}^0(G_2,1) + R^{k,0}_{o_2,1}(\Pi^{1,-1}G_2),
			$$
			with $T_2 = \CI_{o_1}\left(\lambda_k\CI_{o_2}(\lambda_k G_2)\right)$, and $G_2 = \CI_{\bar{o}_1}(\lambda_{-k})\CI_{o_1}(\lambda_{-k_2}) \CI_{o_1}(\lambda_{k_2})$. A quick computation allow to check that $ \CL_{\text{\tiny{low}}}^0(G_2,1) $ is bounded by 
			\begin{equs}
				\mathcal{L}^{0}_{\text{\tiny{low}}}(\CI_{\bar{o}_1}(\lambda_{-k}),1) + \mathcal{L}^{0}_{\text{\tiny{low}}}(\CI_{o_1}(\lambda_{-k_2}),1) + \mathcal{L}^{0}_{\text{\tiny{low}}}(\CI_{o_1}(\lambda_{k_2}),1)  \end{equs}
			and one has:
			\begin{equs}
					\mathcal{L}^{0}_{\text{\tiny{low}}}(\CI_{\bar{o}_1}(\lambda_{-k}),1) =  \mathcal{L}^{0}_{\text{\tiny{low}}}(\CI_{o_1}(\lambda_{-k_2}),1)  = \mathcal{L}^{0}_{\text{\tiny{low}}}(\CI_{o_1}(\lambda_{k_2}),1) = 
					\mathcal{L}^{0}_{\text{\tiny{low}}}(\one,1) = 1.
				\end{equs}
\end{example}

As in \cite[Thm. 3.17]{BS}, we get the following error for our approximation under the assumption:
\begin{equs} \label{assumption_1}
\mathcal{Q}_{\leq r+1} \Pi^{n,r} = \mathcal{Q}_{\leq r+1} \Pi^{n,r'}, \quad r \leq r'.
\end{equs}

\begin{theorem} \label{approxima_tree_paired}
For every forests $ F_1 $ and $ F_2 $, one has,
\begin{equs} \label{identity_main}
\left(\Pi (F_1 \cdot F_2) - \CQ_{\leq r+1} \Pi^{n,r}( F_1 \cdot F_2) \right)(\tau)  = \mathcal{O}\left( \tau^{r+2}  \mathcal{L}^{r}_{\text{\tiny{low}}}(F_1 \cdot F_2,n)  \right)
\end{equs}
where $\Pi $ is defined in \eqref{Pi},  $\Pi^n$ is given in \eqref{recursive_pi_r}  and $\Pi^{n,r} = \Pi^n  \CD^r$ satisfying \eqref{assumption_1}.
\end{theorem}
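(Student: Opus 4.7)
The plan is to proceed by structural induction on $F = F_1 \cdot F_2$, proving the stated bound simultaneously for all $r \geq -1$. Since the multiplicativity of both $\Pi$ and $\Pi^n$ reduces the analysis of $F_1 \cdot F_2$ to that of its components, I would split into four cases: $F = \one$, a planted tree $F = \CI_{o_1}(\lambda_k^{\ell} G)$, a planted tree $F = \CI_{o_2}(\lambda_k^{\ell} G)$, and a nontrivial product. The first three cases essentially reproduce the argument of \cite[Thm.~3.17]{BS}, with the simplification that the full Taylor variant of $\CK$ is controlled by the explicit remainder $R_{o_2,n}^{k,r}$ of Lemma~\ref{Taylor_bound}.

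For the planted case $F = \CI_{o_1}(\lambda_k^{\ell} G)$, I would factor the bounded multiplier $\tau^\ell e^{i\tau P_{o_1}(k)}$ out of both $\Pi F$ and $\Pi^{n,r} F$, reducing to the inductive bound at rank $r - \ell$, which matches $\mathcal{L}^{r}_{\text{\tiny{low}}}(F,n) = \mathcal{L}^{r-\ell}_{\text{\tiny{low}}}(G,n)$. For the planted case $F = \CI_{o_2}(\lambda_k^{\ell} G)$, I would apply Lemma~\ref{Taylor_bound} to $\xi \mapsto \xi^\ell (\Pi^{n,r-\ell-1} G)(\xi)$, which yields a Taylor remainder of size $\tau^{r+2} R_{o_2,n}^{k,r}$; the remaining contribution, coming from having replaced $\Pi G$ by $\Pi^{n,r-\ell-1} G$ inside the integral, produces via the inductive hypothesis and the prefactor $|\nabla|^\alpha(k)\int_0^\tau d\xi$ a term $\tau^{r+2} k^\alpha \mathcal{L}^{r-\ell-1}_{\text{\tiny{low}}}(G,n)$. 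Together these reproduce the recursive formula of Definition~\ref{def:Llow}.

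The genuinely new and hardest step is the product case $F_1 \cdot F_2$ with $F_1, F_2 \neq \one$. Setting $A = \Pi F_1$, $B = \Pi F_2$ and $s = r - \hat n_+(F_2)$, I would first invoke assumption \eqref{assumption_1} together with the fact that $B$ has minimal order $\tau^{\hat n_+(F_2)}$ to replace $\Pi^{n,r} F_1$ by $\Pi^{n,s} F_1$ inside $\CQ_{\leq r+1}$: the contributions of order $> s+1$ in $\Pi^{n,r} F_1$, once multiplied by $B$, produce terms of order $> r+1$ that are annihilated by the outer truncation. Then I would telescope
\begin{align*}
AB - \CQ_{\leq r+1}\bigl(\Pi^{n,s}F_1 \cdot \Pi^{n,r}F_2\bigr)
&= \bigl(A - \CQ_{\leq s+1}\Pi^{n,s}F_1\bigr) \cdot B \\
&\quad + \sum_{\ell \leq s+1}(\Pi^{n,s}F_1)_\ell(\tau)\, \tau^\ell \cdot \bigl(B - \CQ_{\leq r+1-\ell}\Pi^{n,r-\ell}F_2\bigr) + \mathcal{E},
\end{align*}
where $\mathcal{E}$ collects cross terms that are $O(\tau^{r+2})$ and hence vanish after $\CQ_{\leq r+1}$. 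The inductive hypothesis bounds the first bracket by $\tau^{s+2} \mathcal{L}^{s}_{\text{\tiny{low}}}(F_1,n)$ and, combined with $|B(\tau)| \leq c_{F_2} \tau^{\hat n_+(F_2)}(\Pi F_2)_0$, yields the first summand of $\mathcal{L}^{r}_{\text{\tiny{low}}}(F_1 \cdot F_2, n)$. Applied termwise to the sum, the inductive hypothesis produces the second summand $\sum_\ell (\Pi^{n,s}F_1)_\ell \mathcal{L}^{r-\ell}_{\text{\tiny{low}}}(F_2,n)$.

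The main obstacle will be the careful bookkeeping of the truncation orders, ensuring that the asymmetric splitting in Definition~\ref{def:Llow} (which is not symmetric in $F_1, F_2$) is precisely what the telescoping above produces, and that the remainder $\mathcal{E}$ is genuinely killed by $\CQ_{\leq r+1}$. Assumption \eqref{assumption_1} plays an essential role here by allowing us to freely interchange the approximation indices $r$ and $s = r - \hat n_+(F_2)$ at the level of the truncation, without which the product identity would not close at the claimed order.
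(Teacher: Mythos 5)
Your proposal follows essentially the same route as the paper's proof: the same add-and-subtract decomposition of the product case into $\bigl(\Pi F_1 - \CQ_{\leq r+1-\hat n_+(F_2)}\Pi^{n,r-\hat n_+(F_2)}F_1\bigr)\Pi F_2$ plus the $\ell$-indexed sum against $\bigl(\Pi F_2 - \CQ_{\leq r+1-\ell}\Pi^{n,r-\ell}F_2\bigr)$, the same use of Assumption~\eqref{assumption_1} to interchange $r$ and $r-\hat n_+(F_2)$, and the same treatment of $\CI_{o_2}(\lambda_k^\ell G)$ via Lemma~\ref{Taylor_bound} plus the inductive bound propagated through the integral. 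Your error term $\mathcal{E}$ is in fact exactly zero once the truncation identities are written out (as the paper does), so the bookkeeping you flag as the main obstacle closes without residue.
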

\begin{proof} The proof follows the same steps given in \cite[Thm. 3.17]{BS}. However, one has to be more precise for dealing with products as now frequencies $ k_1,...,k_m $ can appear several times. We proceed with the following decomposition:
\begin{equs}
 \Pi &(F_1 \cdot F_2)(\tau)  -  \mathcal{Q}_{\leq r+1} \Pi^{n,r} ( F_1 \cdot F_2 )(\tau)   
\\  = & \left( \Pi F_1 -  \CQ_{\leq r+1-\hat{n}_+(F_2)} \Pi^{n,r-\hat{n}_+(F_2)} F_1 \right)(\tau) (\Pi F_2)(\tau)
\\ & + \CQ_{\leq r+1-\hat{n}_+(F_2)}  \left(\Pi^{n,r-\hat{n}_+(F_2)} F_1 \right)(\tau) (\Pi F_2)(\tau)  -  \mathcal{Q}_{\leq r+1} \Pi^{n,r} ( F_1 \cdot F_2 )(\tau) 
\\ = & \left( \Pi F_1 -  \CQ_{\leq r+1-\hat{n}_+(F_2)}  \Pi^{n,r-\hat{n}_+(F_2)} F_1 \right)(\tau) (\Pi F_2)(\tau) \\ & + \sum_{\ell \leq r +1  - \hat{n}_+(F_2)} \tau^{\ell} (\Pi^{n,r} F_1)_{\ell}(\tau) \left( \Pi F_2 - \CQ_{\leq r +1-\ell} \Pi^{n,r-\ell} F_2\right)(\tau)
\end{equs}
where we have used the following identities
\begin{equs}
& \CQ_{\leq r +1  - \hat{n}_+(F_2)}\left(\Pi^{n,r-\hat{n}_+(F_2)} F_1 \right)(\tau)  = \sum_{\ell \leq r +1 - \hat{n}_+(F_2)} \tau^{\ell} (\Pi^{n,r-\hat{n}_+(F_2)} F_1)_{\ell}(\tau) \\
&\CQ_{\leq r+1} \left( \Pi^{n,r}( F_1 \cdot F_2) \right)(\tau)  =
 \sum_{\ell \leq r +1 - \hat{n}_+(F_2)} \tau^{\ell} (\Pi^{n,r} F_1)_{\ell}(\tau) \mathcal{Q}_{\leq r+1 -\ell}( \Pi^{n,r-\ell} F_2 )(\tau)
\end{equs}
and the fact that by Assumption~\eqref{assumption_1}, one has:
\begin{equs}
(\Pi^{n,r-\hat{n}_+(F_2)} F_1)_{\ell}(\tau) = (\Pi^{n,r} F_1)_{\ell}(\tau).
\end{equs}
Then, we apply  recursively the bounds on $ F_1 $ and $ F_2 $ to get:
\begin{equs}
\left( \Pi F_1 -  \CQ_{\leq r+1 - \hat{n}_+(F_2)} \Pi^{n,r-\hat{n}_+(F_2)} F_1 \right)(\tau) & = \mathcal{O}\left( \tau^{r-\hat{n}_+(F_2)+2}  \mathcal{L}^{r-\hat{n}_+(F_2)}_{\text{\tiny{low}}}(F_1,n) \right)
\\
\left( \Pi F_2 - \CQ_{\leq r+1 -\ell} \Pi^{n,r-\ell} F_2\right)(\tau) & = \mathcal{O}\left( \tau^{r-\ell+2}  \mathcal{L}^{r-\ell}_{\text{\tiny{low}}}(F_2,n) \right)
\end{equs}
which allows us to conclude. It remains to treat the case when $ F = \CI_{o_2}( \lambda^{\ell}_{k}  \bar{F} ) $. In that case, one has:
\begin{equs}
&\left( \Pi-\Pi^{n,r} \right) \left( F \right) (\tau)  = - i|\nabla|^{\alpha} (k) \int_{0}^{\tau}  \xi^{\ell} e^{ i\xi P_{o_2}(k)} (\Pi- \Pi^{n,r- 1-\ell})( \bar F)(\xi) d \xi 
\\ &   - i|\nabla|^{\alpha} (k) \int_{0}^{\tau} \xi^{\ell} e^{ i\xi P_{o_2}(k)}  (\Pi^{n,r-1-\ell}  \bar F)(\xi) d \xi -\CK^{k,r}_{o_2} ( \xi^{\ell} (\Pi^{n,r-1-\ell} \bar{F} )(\xi) )(\tau)
 \\ & = \int_{0}^{\tau} \CO \left( \xi^{r+1} k^{\alpha} \mathcal{L}^{r-1-\ell}_{\text{\tiny{low}}}(\bar F,n) \right)  d\xi +  \CO(\tau^{r+2} R^{k,r}_{o_2,n}\left(\xi^{\ell} (\Pi^{n,r-1-\ell} \bar{F} )(\xi) \right)   ) \\
& =   \CO \left( \tau^{r+2} \mathcal{L}^{r}_{\text{\tiny{low}}}(F,n) \right) .
\end{equs}
where the term $   R^{k,r}_{o_2,n}\left(\xi^{\ell} (\Pi^{n,r-1-\ell} \bar{F} )(\xi) \right)   $  is obtained by applying Lemma~\ref{Taylor_bound}.
\end{proof}

\begin{remark}  An immediate consequence of Theorem~\ref{approxima_tree_paired}, is that for a paired decorated forest $ F =T_1 \cdot T_2 $, one has
\begin{equs} \label{identity_main_bis}
\left(\bar{\Pi} T_1 \Pi T_2 - \CQ_{\leq r+1} \bar{\Pi}^{n,r} T_1 \Pi^{n,r} T_2 \right)(\tau)  = \mathcal{O}\left( \tau^{r+2}  \mathcal{L}^{r}_{\text{\tiny{low}}}(T_1 \cdot T_2,n)  \right)
\end{equs}
\end{remark}

\begin{remark}Assumption~\ref{assumption_1} is satisfied by the examples covered in this work as we use mainly full Taylor expansions for describing the numerical scheme. In full generality, Assumption~\ref{assumption_1} does, however, not hold true (see also Remark \ref{rem:sysK}) and one has to work with a two-parameter family in order to describe the local error. The identity \eqref{identity_main} becomes:
\begin{equs}
\left(\Pi (F_1 \cdot F_2) - \CQ_{\leq m} \Pi^{n,r}( F_1 \cdot F_2) \right)(\tau)  = \mathcal{O}\left( \tau^{r+2}  \mathcal{L}^{r,m}_{\text{\tiny{low}}}(F_1 \cdot F_2,n)  \right), \quad m \leq r+1
\end{equs}
where $ \mathcal{L}^{r,m}_{\text{\tiny{low}}}(F_1 \cdot F_2,n) $ can be defined in a recursive way as in Definition~\ref{def:Llow}. The main change occurs in the approximation of a decorated tree given by Lemma~\ref{Taylor_bound}. It is changed into
\begin{equs}
& - i|\nabla|^{\alpha} (k) \int_{0}^{\tau} \xi^{\ell} e^{ i\xi P_{o_2}(k)}  (\Pi^{n,r-1-\ell}  \bar F)(\xi) d \xi -\CQ_{\leq m}\CK^{k,r}_{o_2} ( \xi^{\ell} (\Pi^{n,r-1-\ell} \bar{F} )(\xi) )(\tau)
\\ & =  \CO(\tau^{m+1} R^{k,r,m}_{o_2,n}\left(\xi^{\ell} (\Pi^{n,r-1-\ell} \bar{F} )(\xi) \right)   )
\end{equs}
The new quantity $ R^{k,r,m}_{o_2,n} $ is the new building block for recursively defining  $  \mathcal{L}^{r,m}_{\text{\tiny{low}}} $.
\end{remark}
\section{Low regularity scheme}
\label{sec::4}
The mild solution of  \eqref{dis} is given by Duhamel's formula
\begin{equation}\label{duhLin_it}
u(\tau,v^{\eta}) = e^{ i \tau \mathcal{L}\left(\nabla\right)} v^{\eta}  - i\vert \nabla \vert^\alpha e^{ i \tau  \mathcal{L}\left(\nabla \right)}  \int_0^{\tau} e^{ -i\xi  \mathcal{L}\left(\nabla \right)} p(u(\xi),\bar u(\xi)) d\xi .
\end{equation}
In the sequel, we will focus on nonlinearities of type
\begin{equs}\label{poly}
p(u, \bar u) = u^N \bar u^M.
\end{equs}
It has been proven in \cite[Prop. 4.3]{BS} that the following tree series expansion is the $k$-th Fourier coefficient of a solution of $ \eqref{duhLin_it} $ up to order $ r +1 $:
\begin{equs}\label{genscheme}
U_{k}^{r}(\tau, v^{\eta}) = \sum_{T \in \hat{\CT}^{r,k}_{0}(R)} \frac{\Upsilon^{p}( T)(v^{\eta})}{S(T)} (\Pi   T )(\tau)
\end{equs}
where 
\begin{itemize}
\item For a decorated tree $ T_{\Labe} = (T,\Labe)$ with only edge decorations, we define the symmetry 
factor $S(T_{\Labe})$  inductively by  $S(\one)\,  { =} 1$, while if 
$T$ is of the form
\begin{equs}  
\prod_{i,j}  \mathcal{I}_{(\Labhom_{t_i},p_i)}\left( T_{i,j}\right)^{\beta_{i,j}}, \quad   
\end{equs}
with $T_{i,j} \neq T_{i,\ell}$ for $j \neq \ell$, then 
\begin{align}\label{S}
S(T)
\,  { :=} 
\Big(
\prod_{i,j}
S(T_{i,j})^{\beta_{i,j}}
\beta_{i,j}!
\Big)\;.
\end{align}
We extend this definition to any tree $ T_{\Labe}^{\Labn,\Labo} $ in $ \CT $ by setting:
\begin{equs}
S(T_{\Labe}^{\Labn,\Labo} )\,  { :=}  S(T_{\Labe} ).
\end{equs}
\item Then, we define the map $ \Upsilon^{p}(T)(v) $ for 
\begin{equs}
 T  = 
\CI_{(\Labhom_1,a)}\left( \lambda_k \CI_{(\Labhom_2,a)}( \lambda_k   \prod_{i=1}^n \CI_{(\Labhom_1,0)}( \lambda_{k_i} T_i) \prod_{j=1}^m \CI_{(\Labhom_1,1)}( \lambda_{\tilde k_j} \tilde T_j)  ) \right), \quad a \in \lbrace 0,1 \rbrace  
\end{equs}
by
\begin{equs}\label{upsi}
\Upsilon^{p}(T)(v)& \,  { :=}  \partial_v^{n} \partial_{\bar v}^{m} p_a(v,\bar v) \prod_{i=1}^n  \Upsilon^p( \CI_{(\Labhom_1,0)}\left( \lambda_{k_i}  T_i \right) )(v) \\ & \prod_{j=1}^m \Upsilon^p( \CI_{(\Labhom_1,1)}( \lambda_{\tilde k_j}\tilde T_j ) )(v)
\end{equs}
and 
\begin{equs}
\Upsilon^{p}(\CI_{(\Labhom_1,0)}( \lambda_{k})  )(v)  \,  { :=}   v_k, \quad 
\Upsilon^{p}(\CI_{(\Labhom_1,1)}( \lambda_{k})  )(v)  \,  { :=} \bar{v}_k.
\end{equs}
Above, we have used the notation:
\begin{equs}
p_0(v,\bar{v}) = p(v,\bar{v}), \quad p_{1}(v,\bar{v}) = \overline{p(v, \bar{v})} 
\end{equs}
In the sequel, we will use the following short hand notation:
\begin{equs}
\overline{\Upsilon^{p}(T)}(v) = \bar{\Upsilon}^{p}(T)(v).
\end{equs}
\item We set
\begin{equs}
\hat \CT_0(R)&  = \lbrace \CI_{(\Labhom_1,0)}( \lambda_k \CI_{(\Labhom_2,0)}( \lambda_k   \prod_{i=1}^N T_i \prod_{j=1}^M  \tilde T_j  ) ), \CI_{(\Labhom_1,0)}(\lambda_k) \,  \\ & \quad T_i \in \hat{\CT}_0(R), \, \tilde{T}_j \in \bar{\hat{\CT}}_0(R), \, k \in \Z^{d}  \rbrace \\
\bar{\hat{\CT}}_0(R)&  = \lbrace \CI_{(\Labhom_1,1)}( \lambda_k \CI_{(\Labhom_2,1)}( \lambda_k   \prod_{i=1}^N T_i \prod_{j=1}^M  \tilde T_j  ) ), \CI_{(\Labhom_1,1)}(\lambda_k) \,  \\ & \quad T_i \in \bar{\hat{\CT}}_0(R), \, \tilde{T}_j \in \bar{\hat{\CT}}_0(R), \, k \in \Z^{d}  \rbrace .
\end{equs}
For a fixed $ k \in \Z^d$, we denote the set $ \hat{\CT}^k_0(R) $ (resp. $ \bar{\hat{\CT}}^k_0(R) $) as the subset of $ \hat{\CT}_0(R) $ (resp.$ \bar{\hat{\CT}}_0(R) $) whose decorated trees have decorations on the node connected to the root given by $ k $. For $ r \in \Z $, $ r \geq -1 $, we set:
\begin{equs}
 \hat \CT_0^{r,k}(R) = \lbrace  T_{\Labe}^{\Labo} \in \hat \CT^{k}_0{ (R)}   \,  , n_+(T_{\Labe}^{\Labo}) \leq r +1  \rbrace. 
\end{equs}
In the previous space, we disregard iterated integrals which have more than $ r+1 $ integrals and will be of order $ \mathcal{O}(\tau^{r+2}) $. We denote by $ \CG^{r,k}_0(R) \subset \CG^{r}_k $ the paired decorated forests $ F = T_1 \cdot T_2 $ such that $ T_1, T_2 \in \hat{\CT}_0^{r,k}(R) $ and $ n_+(F) \leq r + 1 $.
\end{itemize}
We want to compute the second moment of the truncated sum \eqref{genscheme} and then provide a discretisation. Before doing so, let us recall Wick's theorem on the higher order moments of Gaussian random variables that is needed.
\begin{proposition} \label{Wick}
Let $ I $ be a finite set and $ (X_{i})_{i \in I} $ a collection of centred jointly Gaussian random variables. Then
\begin{equs}
\mathbb{E}\left(\prod_{i \in I} X_i \right) = \sum_{P \in \mathcal{P}(I)} \prod_{\lbrace i,j \rbrace \in P} \mathbb{E}(X_i X_j)
\end{equs}
where $ \mathcal{P}(I) $ are partitions of $ I $ with two elements of $I$ in each block of the partition.
\end{proposition}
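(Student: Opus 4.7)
The plan is to derive Wick's formula from the moment generating function of a centered Gaussian vector, which gives the cleanest combinatorial route. Let $n = |I|$, label $I = \{1,\dots,n\}$, and set $C_{ij} = \mathbb{E}(X_i X_j)$. The starting point is the well-known identity
\begin{equation*}
\mathbb{E}\!\left[\exp\!\Big(\sum_{i \in I} t_i X_i\Big)\right] \;=\; \exp\!\Big(\tfrac{1}{2}\sum_{i,j \in I} t_i t_j C_{ij}\Big),
\end{equation*}
which holds for any centered jointly Gaussian family $(X_i)_{i\in I}$, and which I would either take as a standard fact or justify by diagonalising the covariance matrix and reducing to the one-dimensional case.

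The key step is then to apply the mixed partial derivative operator $\partial_{t_1}\cdots \partial_{t_n}$ to both sides and evaluate at $t=0$. The left-hand side yields exactly $\mathbb{E}(\prod_{i\in I} X_i)$. For the right-hand side, I would expand the exponential in its power series,
\begin{equation*}
\exp\!\Big(\tfrac{1}{2}\sum_{i,j} t_i t_j C_{ij}\Big) \;=\; \sum_{m\ge 0} \frac{1}{m!\, 2^m}\Big(\sum_{i,j \in I} t_i t_j C_{ij}\Big)^{m},
\end{equation*}
and observe that only the term with $2m = n$ contributes after applying $\partial_{t_1}\cdots \partial_{t_n}$ and setting $t=0$: lower $m$ vanishes because too few $t$-variables are present, higher $m$ vanishes because a monomial in fewer than $n$ variables is killed by the differentiation. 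In particular, when $n$ is odd every term is killed, matching the empty sum over pair partitions $\mathcal{P}(I) = \emptyset$.

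The remaining combinatorial step is to show that, for $n = 2m$, expanding $\big(\sum_{i,j} t_i t_j C_{ij}\big)^m$ and differentiating once in each $t_i$ produces exactly the sum $\sum_{P \in \mathcal{P}(I)} \prod_{\{i,j\}\in P} C_{ij}$. The combinatorics is as follows: choosing one factor $t_{i_\ell} t_{j_\ell} C_{i_\ell j_\ell}$ from each of the $m$ positions and matching it with a permutation of $\{1,\dots,n\}$ produces $m!\, 2^m$ identical contributions for each unordered pairing (the $m!$ orders the blocks and the $2^m$ swaps the two indices within each block), and these cancel exactly with the prefactor $\tfrac{1}{m!\,2^m}$. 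This matches the right-hand side of Wick's formula. The main obstacle, and the only non-bookkeeping step, is this combinatorial identification; I would present it carefully as a bijection between ordered derivative assignments and decorated pair partitions.

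As an alternative route (useful to cross-check), one can argue by induction on $n$ via the Gaussian integration-by-parts identity $\mathbb{E}(X_i f(X)) = \sum_j C_{ij}\,\mathbb{E}(\partial_j f(X))$, applied to $f(X) = \prod_{j\ne i} X_j$; this yields the recursion $\mathbb{E}(\prod_{i\in I} X_i) = \sum_{j\ne 1} C_{1j}\,\mathbb{E}(\prod_{i\in I\setminus\{1,j\}} X_i)$, which by the induction hypothesis is precisely the expansion over pair partitions of $I$ obtained by first choosing the block containing the element $1$. Either approach closes the proof.
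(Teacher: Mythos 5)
Your proposal is correct. Note that the paper does not actually prove Proposition~\ref{Wick}: it is explicitly ``recalled'' as the classical Wick/Isserlis theorem and used as a black box, so there is no in-paper argument to compare against. Both routes you sketch are the standard ones and both close the proof. In the moment-generating-function route, the two points worth writing out are exactly the ones you flag: (i) the interchange of $\partial_{t_1}\cdots\partial_{t_n}$ with $\mathbb{E}$, which is harmless here because the Gaussian MGF is finite and smooth in a neighbourhood of $t=0$ (indeed entire), and (ii) the $m!\,2^m$ count matching ordered choices of factors and orientations of each pair to a given unordered pair partition; you should also remark that the diagonal terms $i=j$ in $\sum_{i,j}t_it_jC_{ij}$ produce monomials with a repeated variable and are therefore annihilated by the square-free differentiation, so they contribute nothing. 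The integration-by-parts recursion $\mathbb{E}(\prod_{i\in I}X_i)=\sum_{j\neq 1}C_{1j}\,\mathbb{E}(\prod_{i\in I\setminus\{1,j\}}X_i)$ is arguably the shorter write-up, since the induction step is literally the decomposition of pair partitions according to the block containing a distinguished element. Either version is acceptable as a self-contained proof of the proposition.
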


\begin{proposition} \label{prop_moment_2} One has:
\begin{equs}
\CQ_{ \leq r+1} \mathbb{E}(|U_{k}^{r}(\tau, v^{\eta})|^2)  & =  \sum_{F = T_1 \cdot T_2 \in \CG^{r,k}_0(R)} m_F \frac{\bar{\Upsilon}^{p}(T_1)(v) \, \Upsilon^{p}(T_2)(v)}{S(T_1) S(T_2)} \\ & \left( \bar{\Pi} T_1 \right)(\tau)  \, \left( \Pi T_2 \right)(\tau).
\end{equs}
where the $ m_F $ belongs to $ \N $.
\end{proposition}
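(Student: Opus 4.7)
The plan is to square the tree series \eqref{genscheme}, isolate its stochastic content, and apply Wick's formula (Proposition~\ref{Wick}) to produce a sum over pair partitions that we shall identify with paired decorated forests in $\CG_0^{r,k}(R)$.

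First I expand
\begin{equation*}
|U_k^r(\tau,v^\eta)|^2 = \sum_{T_1,T_2\in\hat\CT_0^{r,k}(R)} \frac{\bar\Upsilon^p(T_1)(v^\eta)\,\Upsilon^p(T_2)(v^\eta)}{S(T_1)S(T_2)}(\bar\Pi T_1)(\tau)(\Pi T_2)(\tau).
\end{equation*}
The iterated-integral factors are deterministic, while the recursion \eqref{upsi} together with the leaf identities $\Upsilon^p(\CI_{(\Labhom_1,0)}(\lambda_k))(v^\eta)=\eta_k v_k$ and $\Upsilon^p(\CI_{(\Labhom_1,1)}(\lambda_k))(v^\eta)=\bar\eta_k\bar v_k$ yields the factorisation $\Upsilon^p(T)(v^\eta) = \Upsilon^p(T)(v)\prod_{u\in L_T}\eta^\sharp_u$, where $\eta^\sharp_u\in\{\eta_{\Labo(u)},\bar\eta_{\Labo(u)}\}$ according to $\mathfrak{p}(e_u)\in\{0,1\}$. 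Taking expectation thus reduces to evaluating $\mathbb{E}\bigl[\prod_{u\in L_{T_1}\cup L_{T_2}}\eta_u^\sharp\bigr]$ via Wick.

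Second, I apply Wick's formula and use $\mathbb{E}(\eta_a\eta_b)=\delta_{a,-b}$ together with $\bar\eta_k=\eta_{-k}$ (which is forced by the realness of the initial data). A direct sign check, accounting for the extra conjugation carried by $\bar\Upsilon^p(T_1)$, shows that a Wick pair $\{u,v\}$ whose two leaves sit both in $L_{T_1}$ or both in $L_{T_2}$ produces the $\delta$-constraint on the first line of \eqref{pairFreq}, while a cross pair with $u\in L_{T_1}$ and $v\in L_{T_2}$ produces the constraint on the second line. The root compatibility \eqref{pairRoot} is automatic because both $T_1,T_2$ lie in $\hat\CT_0^{r,k}(R)$ and therefore carry the decoration $k$ on their first inner node with $\mathfrak{p}(e_\rho)=0$. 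Every surviving pair partition $P$ of $L_{T_1}\cup L_{T_2}$ hence determines a paired decorated forest $F=T_1\cdot T_2\in\hat\CG$, whose inner-node decorations are fixed by Kirchhoff's law~\eqref{innerdecoration}.

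Third, I collect all contributions producing the same paired forest $F$: different ordered pairs $(T_1,T_2)$ together with different admissible Wick partitions may yield the same $F$, and I \emph{define} $m_F\in\N$ to be the total number of such contributions (taking into account that leaves bearing repeated frequencies are distinguishable in Wick's sum but not in the resulting forest). The positivity and integrality of $m_F$ are then immediate from this combinatorial definition. Finally, the truncation $\CQ_{\leq r+1}$ combined with $(\bar\Pi T_1)(\tau)(\Pi T_2)(\tau)=\CO(\tau^{\hat n_+(T_1)+\hat n_+(T_2)})$ forces $n_+(F)\le r+1$, and the condition $T_i\in\hat\CT_0^{r,k}(R)$ then places $F$ in $\CG_0^{r,k}(R)$. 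The main bookkeeping obstacle is precisely this last step of identifying $m_F$ as an intrinsic integer attached to $F$, independent of any particular labelling of leaves in $T_1$ and $T_2$; this is where the symmetry factors $S(T_i)$ interact with the combinatorics of Wick pairings and require careful accounting.
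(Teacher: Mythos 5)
Your proposal follows essentially the same route as the paper's proof: expand the double tree sum, factor the noise out of $\Upsilon^{p}$ via the leaf decorations, apply Wick's formula to the product of Gaussians over $L_{T_1}\sqcup L_{T_2}$ with the extra conjugation from $\bar{\Upsilon}^{p}(T_1)$, identify the resulting Kronecker deltas with the pairing conditions \eqref{pairFreq}, and let $m_F$ count the pairings producing the same forest. Your added remarks on the automatic root compatibility \eqref{pairRoot} and on the role of $\CQ_{\leq r+1}$ are correct refinements of details the paper leaves implicit, but the argument is the same.
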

\begin{proof}
We first notice that:
\begin{equs} \label{comput_E}
\mathbb{E}(|U_{k}^{r}(\tau, v^{\eta})|^2) & = \sum_{T_1, T_2 \in \hat \CT_0^{r,k}(R)} \mathbb{E} \left(  \frac{\bar{\Upsilon}^{p}(T_1)(v^{\eta}) \, \Upsilon^{p}(T_2)( v^{\eta})}{S(T_1) S(T_2)}\right) \\ & \left( \bar{\Pi} T_1 \right)(\tau)  \, \left( \Pi T_2 \right)(\tau)
\end{equs}
Now, we use the fact that for each decorated tree $ T $, one has 
\begin{equs}
\Upsilon^{p}(T)(v^{\eta}) =\Upsilon^{p}(T)(v) \prod_{u \in L_T}  \eta^{u}_{k_{\Labo(u)}}
\end{equs}
where $ \eta^{u}_{k_{\Labo(u)}} $ is given by 
\begin{equs}
\eta^{u}_{k_{\Labo(u)}} = \overline{\eta_{k_{\Labo(u)}}}, \quad \text{if } \mathfrak{p}(e_u) = 1, \quad
 \eta^{u}_{k_{\Labo(u)}} = \eta_{k_{\Labo(u)}}, \quad \text{if } \mathfrak{p}(e_u) = 0,
\end{equs}
where $ \mathfrak{p}(e_u) $ is the second decoration on the edge $ e_u $ connecting the leaf $ u $ to the rest of the tree $ T $.
Then, using the Wick theorem, one gets:
\begin{equs}
& \mathbb{E} \left(  \bar{\Upsilon}^{p}(T_1)(v^{\eta}) \, \Upsilon^{p}(T_2)( v^{\eta})\right)  \\ & = \sum_{P \in \mathcal{P}(L_{T_1} \sqcup L_{T_2})} \prod_{\lbrace u,v \rbrace \in P} \mathbb{E}(\tilde{\eta}^{u}_{k_{\Labo(u)}} \tilde{\eta}^{v}_{k_{\Labo(v)}})
  \bar{\Upsilon}^{p}(T_1)( v) \, \Upsilon^{p}(T_2)( v).
\end{equs}
where
\begin{equs}
\tilde{\eta}^{v}_{k_{\Labo(v)}} = 
 \left\{ \begin{aligned}
  &   \overline{\eta}^{v}_{k_{\Labo(v)}}, \,
  \text{if } v \in L_{T_1}  , \\
  & \eta^{v}_{k_{\Labo(v)}}, \,
  \text{if } v \in L_{T_2}, 
  \\
  \end{aligned} \right. 
\end{equs}
We obtain 
\begin{equs}\label{assumpIC}
\mathbb{E}(\tilde{\eta}^{u}_{k_{\Labo(u)}} \tilde{\eta}^{v}_{k_{\Labo(v)}})  = \left\{ \begin{aligned}
  &   \delta_{(-1)^{\mathfrak{p}(e_u)}\Labo(u)  + (-1)^{\mathfrak{p}(e_v)}\Labo(v),0 }, \,
  \text{if } \lbrace u,v \rbrace \subset   L_{T_1} \text{ or } \lbrace u,v \rbrace \subset L_{T_2}  , \\
  & \delta_{(-1)^{\mathfrak{p}(e_u) +1}\Labo(u)  + (-1)^{\mathfrak{p}(e_v)}\Labo(v),0 }, \,
  \text{ otherwise}. 
  \\
  \end{aligned} \right.  
\end{equs}
that fixes the condition \eqref{pairFreq}. The map $ \delta_{\ell_1,\ell_2} $ is given by
\begin{equs}
\delta_{\ell_1,\ell_2} = 1 \quad \text{if } \ell_1 + \ell_2 = 0, \,\text{and } 0 \text{ otherwise}.
\end{equs}
The coefficients $ m_F $ reflects the number of pairings that can give the same $ F $ which allows us to conclude.
\end{proof}
From Proposition~\ref{prop_moment_2}, our general scheme is given by:
\begin{equs} \label{general_scheme_2}
V_{k}^{n,r}(\tau, v) & = \sum_{ F = T_1 \cdot T_2 \in \CG^{r,k}_0(R)} m_F \frac{\bar{\Upsilon}^{p}(T_1)(v) \, \Upsilon^{p}(T_2)(v)}{S(T_1) S(T_2)}  \\ & \mathcal{Q}_{\leq r+1} \left( \bar{\Pi}^{n,r} T_1 \Pi^{n,r} T_2 \right)(\tau) 
\end{equs}
The second moment of the $k$-th Fourier coefficient $ u_k $ of the solution of \eqref{duhLin_it} is given by: $
V_k = \mathbb{E}(|u_k|^2).$
\begin{theorem}[Local error]\label{thm:genloc} 
The numerical scheme \eqref{general_scheme_2}  with initial value $v = u(0)$ approximates the exact second moment  $V_{k}(\tau,v) $ up to a  local error of type
\begin{equs}
V_{k}^{n,r}(\tau,v) - V_{k}(\tau,v) & = \sum_{T_1 \cdot T_2 \in \CG^{r,k}_0(R)} \CO\left(\tau^{r+2} \CL^{r}_{\text{\tiny{low}}}(T_1 \cdot T_2,n) \bar{\Upsilon}^{p}( T_1)(v)\Upsilon^{p}( T_2)(v)  \right)
\\  & + \CO\left( \tau^{r+2} |\nabla|^{\alpha (r+2)} (k) \,  \mathbb{E}(\tilde p_k(u(t),v^\eta))\right)
\end{equs}
for some polynomial $\tilde p_k$ and $ 0 \leq t \leq \tau $ and where the operator $\CL^{r}_{\text{\tiny{low}}}(T_1 \cdot T_2,n)$, given in Definition \ref{def:Llow}, embeds the necessary regularity of the solution.
\end{theorem}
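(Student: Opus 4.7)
The plan is to split the error into two contributions: a \emph{truncation} error coming from the finite Duhamel expansion $U_k^r$, and a \emph{discretisation} error coming from replacing each iterated integral $(\Pi T)(\tau)$ by its approximation $(\Pi^{n,r} T)(\tau)$. Concretely, I would write
\begin{equs}
V_k^{n,r}(\tau,v) - V_k(\tau,v) &= \bigl(V_k^{n,r}(\tau,v) - \CQ_{\leq r+1} \mathbb{E}(|U_k^r(\tau,v^\eta)|^2)\bigr)\\
&\quad + \bigl(\CQ_{\leq r+1} \mathbb{E}(|U_k^r(\tau,v^\eta)|^2) - \mathbb{E}(|u_k(\tau,v^\eta)|^2)\bigr),
\end{equs}
and bound each bracket separately.

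For the first bracket, I would invoke Proposition~\ref{prop_moment_2} to rewrite $\CQ_{\leq r+1}\mathbb{E}(|U_k^r|^2)$ as the paired tree series over $\CG_0^{r,k}(R)$. Since the combinatorial prefactors $m_F$, $\Upsilon^p$ and $S$ are identical to those appearing in \eqref{general_scheme_2}, the difference collapses to
\begin{equs}
\sum_{F = T_1 \cdot T_2 \in \CG_0^{r,k}(R)} \frac{m_F\, \bar\Upsilon^p(T_1)(v)\,\Upsilon^p(T_2)(v)}{S(T_1)\,S(T_2)}\Bigl[\CQ_{\leq r+1}\bigl(\bar\Pi^{n,r} T_1\, \Pi^{n,r} T_2\bigr) - (\bar\Pi T_1)(\Pi T_2)\Bigr](\tau).
\end{equs}
The bracket is controlled termwise by the local identity \eqref{identity_main_bis} from Theorem~\ref{approxima_tree_paired}, yielding $\CO(\tau^{r+2}\CL^{r}_{\text{\tiny{low}}}(T_1\cdot T_2,n))$ per forest and hence the first summand of the claimed error.

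For the truncation bracket, I would return to the Fourier Duhamel formula \eqref{duhamel_fourier} and argue that $u_k - U_k^r$ is an iterated integral of depth $r+2$ whose integrand is a polynomial in $u(t,v^\eta)$ for some intermediate $t\in[0,\tau]$; each of the $r+2$ additional convolutions contributes one factor of $|\nabla|^\alpha$ in Fourier space, producing the bound $|u_k - U_k^r| \leq \tau^{r+2}|\nabla|^{\alpha(r+2)}(k)\, q_k(u(t),v^\eta)$ for some polynomial $q_k$. Expanding $|u_k|^2 = |U_k^r + (u_k - U_k^r)|^2$ and taking expectations, the cross and remainder terms are absorbed into a single polynomial expression $\tilde p_k$, producing the second summand of the error.

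The main obstacle is bookkeeping at the interface of the two error sources. One must ensure that the projection $\CQ_{\leq r+1}$ and the summation over $\CG_0^{r,k}(R)$ match the Wick expansion of $\mathbb{E}(|U_k^r|^2)$ \emph{exactly}, so that no residual cross terms appear beyond those already captured by $\CL^{r}_{\text{\tiny{low}}}$; this is why Theorem~\ref{approxima_tree_paired} must be applied termwise rather than as a global bound. The remainder of the argument is then a routine combination of Proposition~\ref{prop_moment_2}, the local estimate \eqref{identity_main_bis}, and the standard Duhamel remainder formula.
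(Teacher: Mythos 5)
Your proposal is correct and follows essentially the same route as the paper: the intermediate quantity $\CQ_{\leq r+1}\mathbb{E}(|U_k^r|^2)$ you introduce is exactly the paper's $V_k^r(\tau,v)$ (via Proposition~\ref{prop_moment_2}), and the two brackets of your decomposition correspond precisely to the paper's estimates \eqref{app2} (termwise application of Theorem~\ref{approxima_tree_paired}) and \eqref{app1} (Duhamel truncation remainder). The only difference is that you sketch a justification of the truncation bound, which the paper simply asserts.
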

\begin{proof}
The second moment of the $ k $-th Fourier coefficient of the solution  up to order $ r $ is given by
\begin{equs}
V_{k}^{r}(\tau,v) =  \sum_{F = T_1 \cdot T_2 \in \CG^{r,k}_0(R)} m_F \frac{\bar{\Upsilon}^{p}(T_1)(v) \, \Upsilon^{p}(T_2)(v)}{S(T_1) S(T_2)}   \left( \bar{\Pi} T_1 \Pi T_2 \right)(\tau) 
\end{equs}
which satisfies
\begin{equs}\label{app1}
 V_k(\tau,v) - V_{k}^{r}(\tau,v)  = 
 \CO\left( \tau^{r+2} |\nabla|^{\alpha (r+2)} (k) \,  \mathbb{E}(\tilde p_k(u(t),v^\eta))\right)
\end{equs}
 for some polynomial $\tilde p_k$ and $ 0 \leq t \leq \tau $. Thanks to  Theorem~\ref{approxima_tree_paired}, we furthermore obtain that
\begin{equs}\label{app2}\begin{aligned}
V_{k}^{n,r} & (\tau,v) - V_{k}^{r}(\tau,v) 
\\ & = \sum_{F = T_1 \cdot T_2 \in \CG^{r,k}_{0}(R)} m_F \frac{\bar{\Upsilon}^{p}( T_1)\Upsilon^{p}( T_2)}{S(T_1)S(T_2)}(v)
\\ & \left( \bar{\Pi} T_1 \Pi T_2  -\CQ_{\leq r +1 }\bar{\Pi}^{n,r} T_1 \Pi^{n,r} T_2 \right) (\tau) \\
& = \sum_{T_1 \cdot T_2  \in \CG^{r,k}_{0}(R)}  \CO\left(\tau^{r+2} \CL^{r}_{\text{\tiny{low}}}(T_1 \cdot T_2 ,n)  \bar{\Upsilon}^{p}( T_1)(v) \Upsilon^{p}( T_2)(v)\right).
\end{aligned}
\end{equs}
Next we write
\begin{equs}
V_{k}^{n,r}(\tau,v) - V_{k}(\tau,v)  & = V_{k}^{n,r}(\tau,v) - V_k^r(\tau,v)  + V_k^r(\tau,v) - V_{k}(\tau,v)
\end{equs}
where by the definition of $\CL^{r}_{\text{\tiny{low}}}(T_1 \cdot T_2,n)$ we easily see that the approximation error~\eqref{app2} is in general dominant compared to \eqref{app1} (see KdV at order one for a counter-example in Section~\ref{sec:KDV}). 
\end{proof}

We note that generally the terms $ \CL^{r}_{\text{\tiny{low}}}(T_1 \cdot T_2,n)$ are the leading terms in the local error analysis.

\section{Applications}\label{sec:app}
In this section we detail our general numerical scheme \eqref{general_scheme_2} together with its error analysis (see Theorem  \ref{thm:genloc}) on two concrete examples: The cubic Schr\"odinger equation (see Section \ref{sec:NLS}) and the Korteweg--de Vries equation (see Section \ref{sec:KDV}).

\subsection{Nonlinear Schr\"odinger}\label{sec:NLS}
As a first example let us consider the  nonlinear Schr\"odinger (NLS) equation
\begin{equation}\label{NLS}
i  \partial_t u(t,x) +  \Delta u(t,x) =  \vert u(t,x)\vert^2 u(t,x)\quad (t,x) \in \R \times  \T^d 
\end{equation}
together with an initial condition of the form \eqref{ic}. 
The NLS equation \eqref{NLS} fits into the general form \eqref{dis} with
\begin{equation*}\label{kgrDo}
\begin{aligned}
 \mathcal{L}\left(\nabla \right)  = \Delta, \quad \alpha = 0,   \quad \text{and}\quad   p(u,\overline u) =   |u|^2u.
 \end{aligned}
\end{equation*} 
We set $\Lab = \{\mathfrak{t}_1, \mathfrak{t}_2\}$, $P_{\mathfrak{t}_1} = -X^2$ and $P_{\mathfrak{t}_2} = X^2$.  An edge decorated by $(\mathfrak{t}_1, 0)$ is denoted by $ \<thick> $, while an edge decorated by $(\mathfrak{t}_1, 1)$ is denoted by $ \<thick2> $. Similarly, an edge decorated by $(\mathfrak{t}_2, 0)$ is denoted by $\<thin>$, and by $\<thin2>$ an edge decorated by $(\mathfrak{t}_2, 1)$.

\subsubsection{First order scheme}\label{NLS1}
We start by expanding the solution as a power series expansion generated by its Duhamel's iterates (see \eqref{duhamel}). For a first order approximation we have,
\begin{equation}\label{expansion1}
u(\tau,v^{\eta}) = u^0(\tau,v^{\eta}) + u^1(\tau,v^{\eta}) + \CO(\tau^2),
\end{equation}
where $u^{m}(\tau,v^{\eta})$ denotes the $m$-th Duhamel iterate. Namely we have,
\begin{align*}
u^0(\tau, v^{\eta}) = e^{i \tau \Delta } v^{\eta}, \quad u^1(\tau,v^{\eta})= e^{i\tau\Delta}\int_0^\tau e^{-i\zeta \Delta} (e^{i\zeta \Delta}v^{\eta})^2(e^{-i\zeta \Delta}\overline{v^{\eta}}) d\zeta.
\end{align*}
Using \eqref{decoratedV1} one can expand the $k$-th Fourier coefficient of the above truncated solution \eqref{expansion1}, and of its conjugate, as a sum over all decorated ternary trees of size at most one. 
As such, using \eqref{series-trunc}, we can express a first order truncated approximation of $ \mathbb{E}(|u_k(\tau,v^{\eta})|^2)$ as a sum over paired couples of decorated trees. 
We now explain how these pairings are made, and construct the set of paired couples $\mathcal{G}^0_k$.

For a first order approximation we fix $r=0$, and wish to construct an approximation of $ \mathbb{E}\left(|u_k(\tau,v^{\eta})|^2\right)$ with a local error of second order. We have
\begin{equs}\label{o1Exp} \begin{aligned}
{\mathbb{E}}\left(|u_k(\tau,v^{\eta})|^2\right) &= {\mathbb{E}}\Bigg(\left(u^0_k(\tau,v^{\eta})+ u^1_k(\tau,v^{\eta})\right)\left(\overline{u^0_k(\tau,v^{\eta})+ u^1_k(\tau,v^{\eta})}\right)\Bigg)\\ 
&+ \mathcal{O}(\tau^2)\\\nonumber
&= {\mathbb{E}}\left(|u^0_k(\tau,v^{\eta})|^2\right) + 2Re\Bigg( {\mathbb{E}}\left(\overline{u^0_k(\tau,v^{\eta})}u^1_k(\tau,v^{\eta})\right) \Bigg) \\
&+  \mathcal{O}(\tau^2).
\end{aligned}
\end{equs}

Each of the above terms involves expectations of products of Gaussians. Hence, their computations revolve around applications of Wick's formula, which we have encoded using a decorated tree formalism at equation \eqref{pairFreq}.
We start by dealing with the first term $\mathop{\mathbb{E}}\left(u^0_k(\tau,v^{\eta})\overline{u^0_k(\tau,v^{\eta})} \right)$. In order for this expectation not to be a trivial one we require the frequencies to be equal. Namely, the couple of paired tree stemming from this pairing is 
\begin{equs}\label{NLS_T1}
 F_1 = T_1 \cdot T_1, \quad 
 T_1 = \CI_{(\mathfrak{t_1},0)}(\lambda_k) = \begin{tikzpicture}[scale=0.2,baseline=-5]
\coordinate (root) at (0,1);
\coordinate (tri) at (0,-1);
\draw[kernels2] (tri) -- (root);
\node[var] (rootnode) at (root) {\tiny{$ k $}};
\node[not] (trinode) at (tri) {};
\end{tikzpicture}.
\end{equs}

We now deal with the computation of the second term $\mathop{\mathbb{E}}\left(\overline{u^0_k(\tau,v^{\eta})}u^1_k(\tau,v^{\eta})\right)$. In this case, there are two possible pairings of the frequencies that satisfy equation \eqref{pairFreq}, and which yield a nontrivial computation of this second term. See the Example \ref{ex} for more details. 
The couple of paired trees stemming from these two pairings are:
\begin{equs}\label{NLS_T2}
 F_2 =  T_1 \cdot T_2, 
\quad T_2 = \begin{tikzpicture}[scale=0.2,baseline=-5]
\coordinate (root) at (0,-1);
\coordinate (t3) at (0,1);
\coordinate (t4) at (0,3);
\coordinate (t41) at (-2,5);
\coordinate (t42) at (2,5);
\coordinate (t43) at (0,7);
\draw[kernels2] (t3) -- (root);
\draw[symbols] (t3) -- (t4);
\draw[kernels2,tinydots] (t4) -- (t41);
\draw[kernels2] (t4) -- (t42);
\draw[kernels2] (t4) -- (t43);
\node[not] (rootnode) at (root) {};
\node[not] (rootnode) at (t4) {};
\node[not] (rootnode) at (t3) {};
\node[var] (rootnode) at (t41) {\tiny{$ \bar{k} $}};
\node[var] (rootnode) at (t42) {\tiny{$ k_{\tiny{2}} $}};
\node[var] (rootnode) at (t43) {\tiny{$  \bar{k}_{\tiny{2}} $}};
\end{tikzpicture}, 
\end{equs}
where $ \bar{k} = -k $ and  in symbolic notation we have 
\begin{equs}\label{NLS_T2sym}
T_2 = \CI_{{(\mathfrak{t}_1,0)}}\lambda_k\Big(\CI_{{(\mathfrak{t}_2,0)}}\left(\lambda_{k} \CI_{(\mathfrak{t}_1,1)}(\lambda_{-k})\CI_{(\mathfrak{t}_1,0)}(\lambda_{-k_2}) \CI_{{(\mathfrak{t}_1,0)}}(\lambda_{k_2})\right)\Big). \quad
\end{equs}
Finally the third pairing to consider is,
\begin{equs}\label{NLS_T3}
 F_3 =  T_1 \cdot T_3, 
\quad T_3 = \begin{tikzpicture}[scale=0.2,baseline=-5]
\coordinate (root) at (0,-1);
\coordinate (t3) at (0,1);
\coordinate (t4) at (0,3);
\coordinate (t41) at (-2,5);
\coordinate (t42) at (2,5);
\coordinate (t43) at (0,7);
\draw[kernels2] (t3) -- (root);
\draw[symbols] (t3) -- (t4);
\draw[kernels2,tinydots] (t4) -- (t41);
\draw[kernels2] (t4) -- (t42);
\draw[kernels2] (t4) -- (t43);
\node[not] (rootnode) at (root) {};
\node[not] (rootnode) at (t4) {};
\node[not] (rootnode) at (t3) {};
\node[var] (rootnode) at (t41) {\tiny{$ k_2 $}};
\node[var] (rootnode) at (t42) {\tiny{$ k $}};
\node[var] (rootnode) at (t43) {\tiny{$  k_{\tiny{2}} $}};
\end{tikzpicture},
\end{equs}
where in symbolic notation we have,
$$
T_3= \CI_{{(\mathfrak{t}_1,0)}}\lambda_k\Big(\CI_{{(\mathfrak{t}_2,0)}}\left(\lambda_{k} \CI_{(\mathfrak{t}_1,1)}(\lambda_{k_2})\CI_{(\mathfrak{t}_1,0)}(\lambda_{k_2}) \CI_{{(\mathfrak{t}_1,0)}}(\lambda_{k})\right) \Big).
$$
There is another possible pairing which is equivalent to $F_3$, and is given by,
\begin{equs}\label{tildeF_3}
 \tilde{F_3} =  T_1 \cdot T_3, 
\quad T_3 = \begin{tikzpicture}[scale=0.2,baseline=-5]
\coordinate (root) at (0,-1);
\coordinate (t3) at (0,1);
\coordinate (t4) at (0,3);
\coordinate (t41) at (-2,5);
\coordinate (t42) at (2,5);
\coordinate (t43) at (0,7);
\draw[kernels2] (t3) -- (root);
\draw[symbols] (t3) -- (t4);
\draw[kernels2,tinydots] (t4) -- (t41);
\draw[kernels2] (t4) -- (t42);
\draw[kernels2] (t4) -- (t43);
\node[not] (rootnode) at (root) {};
\node[not] (rootnode) at (t4) {};
\node[not] (rootnode) at (t3) {};
\node[var] (rootnode) at (t41) {\tiny{$ k_2 $}};
\node[var] (rootnode) at (t42) {\tiny{$ k_2 $}};
\node[var] (rootnode) at (t43) {\tiny{$  k $}};
\end{tikzpicture}.  
\end{equs}
The above pairing will yield the same quantity to approximate as the pairing of $F_3$, since the resonance structure obtained through both pairings are identical: the resonance structure is zero. Hence, we only consider the paired decorated forest $F_3$. We keep track of this by setting $m_{F_3}=2$, while $m_{F_1} = 1 = m_{F_2}$. 
In conclusion, the set of couples of paired decorated forests is given by 
$$\mathcal{G}^{0,k}_{0} = \{ T_1 \cdot T_1, \ T_1 \cdot T_2, \ T_1 \cdot T_3 \}$$
We are now ready to introduce the first order low-regularity approximation to the second order moments of $u_k(\tau,v^{\eta})$, solution of \eqref{NLS} with initial data \eqref{ic}.

\begin{corollary}\label{cor_schemeNLS1}
At first order our general low regularity scheme \eqref{schema_general} takes the form:
\begin{equs}\label{schemeNLS1_Fourier}
V_{k}^{1,0}(\tau,v) = v_k\overline{v_k} ,
\end{equs}
and is locally of order $\CO(\tau^2|\nabla|v)$. The above is the Fourier coefficient associated to the following scheme written in physical space: 
\begin{equs}\label{schemeNLS1}
u^{\ell+1} = u^{\ell} * \tilde{u}^{\ell} 
\end{equs}
where $ \tilde{u}^{\ell}(x) = \overline{u^{\ell}(-x)} $. 
\end{corollary}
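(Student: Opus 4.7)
The plan is to apply Proposition~\ref{prop_moment_2} at order $r=0$ and explicitly evaluate the contributions from the three paired decorated forests $F_1 = T_1 \cdot T_1$, $F_2 = T_1 \cdot T_2$, $F_3 = T_1 \cdot T_3$ enumerated above, with combinatorial multiplicities $m_{F_1} = m_{F_2} = 1$ and $m_{F_3} = 2$, together with the transposed pairings $T_j \cdot T_1$ ($j = 2, 3$) which appear separately in the non-symmetric sum~\eqref{general_scheme_2}.

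First I would handle the diagonal forest $F_1$. Since $S(T_1) = 1$, a direct application of~\eqref{Pi} gives $(\bar{\Pi}T_1)(\tau)(\Pi T_1)(\tau) = e^{i\tau k^2} e^{-i\tau k^2} = 1$ exactly, while~\eqref{upsi} yields $\bar{\Upsilon}^p(T_1)(v)\Upsilon^p(T_1)(v) = \overline{v_k}\,v_k$. Thus $F_1$ contributes precisely $v_k \overline{v_k}$, and no truncation by $\CQ_{\leq 1}$ is required.

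Next I would treat the crossed pairings $F_j = T_1 \cdot T_j$ for $j \in \{2, 3\}$. Applying~\eqref{recursive_pi_r} together with the approximation operator of Definition~\ref{CK} (which here amounts to a single-term Taylor expansion, since $\mathcal{K}^{k,0}_{(\mathfrak{t}_2,0)}$ retains only the $\ell = 0$ summand in~\eqref{exp_CK}) gives
\begin{equation*}
\CQ_{\leq 1}\bigl(\bar{\Pi}^{1,0} T_1 \, \Pi^{1,0} T_j\bigr)(\tau) = -i\tau, \qquad j \in \{2, 3\},
\end{equation*}
the cancellation of the $e^{\pm i\tau k^2}$ factors being immediate. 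Using $v_{-k} = \overline{v_k}$ and $\partial_v^2 \partial_{\bar v} p = 2$ in~\eqref{upsi}, the combined real coefficients $m_{F_j} \bar{\Upsilon}^p(T_1)(v)\Upsilon^p(T_j)(v)/(S(T_1) S(T_j))$ reduce to $|v_k|^2 |v_{k_2}|^2$ and $2 |v_k|^2 |v_{k_2}|^2$ respectively. Each crossed contribution is therefore of the form $-i\tau$ times a real coefficient, and the inclusion of the transposed pairing $T_j \cdot T_1$ in~\eqref{general_scheme_2}, which produces the complex conjugate $+i\tau$ times the same real coefficient, forces all $\mathcal{O}(\tau)$ crossed terms to cancel. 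Collecting yields $V_k^{1,0}(\tau,v) = v_k \overline{v_k}$ as claimed.

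For the local error I would invoke Theorem~\ref{thm:genloc} with $n = 1$ and $r = 0$, unfolding Definition~\ref{def:Llow} on each forest. The diagonal $F_1$ produces no remainder, while for $F_2$ and $F_3$ Lemma~\ref{Taylor_bound} together with~\eqref{R_fullTaylor} yields remainders of the form $\tau^2 (k^2 - k_2^2)$. The key reduction, explained in Remark~\ref{rem_reg}, is that a factor $k^2 |v_k|^2$ is (up to sign) the $k$-th Fourier coefficient of $\nabla v \ast \nabla \tilde{v}$, so that two frequency powers distributed across the squared amplitudes collapse into a single spatial derivative on the initial data, producing the announced $\mathcal{O}(\tau^2 |\nabla| v)$ local error. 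Finally, the physical-space formulation follows from the torus convolution identity $(f \ast g)_k = f_k g_k$ combined with $\tilde{u}^\ell_k = \overline{u^\ell_k}$, so that the iteration $u^{\ell+1} = u^\ell \ast \tilde{u}^\ell$ realises the Fourier-side scheme~\eqref{schemeNLS1_Fourier}. The main obstacle in turning this sketch into a rigorous proof is the careful bookkeeping of the $\mathcal{O}(\tau)$ cancellations between conjugate pairings in step two and the reduction of the resonance polynomials to convolutions of derivatives of the initial data in step three.
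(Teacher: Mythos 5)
Your proposal is correct and follows essentially the same route as the paper's proof: the same enumeration of $\CG^{0,k}_0(R)$ with the same values of $m_F$, $S$ and $\Upsilon^{p}$, the same evaluation $\CQ_{\leq 1}\bigl(\bar{\Pi}^{1,0}T_1\,\Pi^{1,0}T_j\bigr)(\tau)=-i\tau$, the cancellation of the purely imaginary $\CO(\tau)$ cross terms against their conjugate (transposed) pairings --- which the paper packages as a $2\,\mathrm{Re}(\cdot)$ in its expansion --- and the same local-error argument via Theorem~\ref{thm:genloc}, Definition~\ref{def:Llow}, Lemma~\ref{Taylor_bound} and the convolution observation of Remark~\ref{rem_reg}. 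The only minor imprecision is attributing a remainder of the form $\tau^2(k^2-k_2^2)$ to $F_3$ as well as $F_2$: since the resonance polynomial for $T_3$ vanishes identically (cf.~\eqref{ZeroRes}), that pairing is integrated exactly and contributes no local error, as the paper notes, so this does not affect the conclusion.
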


\begin{remark}
In order to prove convergence of the schemes presented in this article, one can make an analysis in the Sobolev spaces $H^r, r\ge 0$, and combine a stability argument with an analysis of the local error terms to conclude a global error bound on the numerical scheme. 
This global error analysis is out of scope for this paper, but can nevertheless be made by following for example the steps of \cite{AB}. 
\end{remark}

\begin{proof}
We read from equation \eqref{schema_general} that the first-order scheme has the general form,
\begin{equs}\label{expansionNLS1}
V_{k}^{1,0}(\tau,v)&  = \sum_{F = T\cdot \tilde{T} \in \CG^{0,k}_0(R)} m_F\frac{\bar{\Upsilon}^{p}(T)(v) \, \Upsilon^{p}(\tilde{T})(v)}{S(T) S(\tilde{T})} \\ & \mathcal{Q}_{\le 1} \left((\bar{\Pi}^{1,0} T)(\Pi^{1,0}\tilde{T}) \right)(\tau),\qquad .
\end{equs}
%
%
Furthermore, we have that,
\begin{align}\label{upsilon1}
&\Upsilon^{p}(T_1)(v) = v_k, \quad \Upsilon^{p}(T_2)(v) = 2v_{k}v_{k_{2}}\overline{v_{k_{2}}}= \Upsilon^{p}(T_3)(v),\\\nonumber
& S(T_1) = 1 , \quad S(T_2) = 2 = S(T_3).
\end{align}
Hence, the first-order scheme \eqref{expansionNLS1} takes the form
\begin{equs}\label{expansionNLS1_1}
V_{k}^{1,0}(\tau,v) &= v_k\overline{v_k} \CQ_{\le 1} (\bar{\Pi}^{1,0}(T_1))(\Pi^{1,0}(T_1)) \\
&+ 2Re\left(v_k\overline{v_k}\sum_{k_2\in \Z^d} |v_{k_{2}}|^2 \CQ_{\le 1} \bigg(\bar{\Pi}^{1,0}(T_1)\big(\Pi^{1,0}(T_2) + 2\Pi^{1,0}(T_3) \big)\bigg)(\tau)\right).
\end{equs}
We are left to calculate the first-order approximations, encoded in the definition of the character $\Pi^{1,0}$, of each of our decorated trees appearing in the couples defined in $\CG^{0,k}_0(R)$.
From \eqref{NLS_T1} and by Definition \ref{recursive_pi_r}, given that $P_{(\mathfrak{t}_{1},0)}(k) = -k^2$ and $P_{(\mathfrak{t}_{1},1)}(k) =k^2$ we have,
\begin{equs}\label{compT1_NLS}
(\Pi^{1,0} T_1)(\tau) = e^{i\tau P_{(\mathfrak{t}_{1},0)}(k)}=e^{-i \tau k^2},
\quad
(\bar{\Pi}^{1,0} {T_1})(\tau) = e^{i\tau P_{(\mathfrak{t}_{1},1)}(k)} = e^{i \tau k^2}.\qquad
\end{equs}
Henceforth, for notational convenience we will denote $o_1 = (\mathfrak{t}_1, 0)$, $\overline{o_1} = (\mathfrak{t}_1, 1)$, and $o_2 = (\mathfrak{t}_2, 0)$. Then again by Definition \ref{recursive_pi_r} and \eqref{NLS_T2sym} it follows that,
\begin{equs}\label{NLS_compT2} \begin{aligned}
& \left( \Pi^{1,0} T_2 \right)(\tau)  \\& = e^{-i \tau k^2} \left( \Pi^{1,0} \CI_{o_2}(\lambda_{k} \CI_{\bar{o}_1}(\lambda_{-k})\CI_{o_1}(\lambda_{-k_2}) \CI_{o_1}(\lambda_{k_2})) \right)(\tau)\\ \nonumber
& = e^{-i \tau k^2}  \mathcal{K}_{o_2}^{k,0} \left((\Pi^{n,-1}\CI_{\bar{o}_1}(\lambda_{-k}))(\xi)(\Pi^{n,-1} \CI_{o_1}(\lambda_{-k_2})(\xi) \right. \\ & \left. (\Pi^{n,-1}\CI_{o_1}(\lambda_{k_2}))(\xi) \right)(\tau)\\\nonumber
&= e^{-i \tau k^2}  \mathcal{K}_{o_2}^{k,0}( e^{i\xi(k^2 - 2k_2^2)} ,1)(\tau)\\ \nonumber
& = -i\tau e^{-i \tau k^2},
\end{aligned}
\end{equs}
where we applied Definition \ref{CK} with $q=\ell=0$, $P(k_1,..., k_n) = k^2 - 2k_2^2$, and $P_{o_2}(k) = k^2$. We note that the resonance structure obtain via this pairing is given by
\begin{equs}\label{NonZeroRes}
P(k_1,..., k_n) + P_{o_2}(k) = 2(k^2 - k_2^2).
\end{equs}
Similarly we have,
\begin{equs}\label{NLS_compT3}
(\Pi^{1,0} T_3)(\tau) &= e^{-i \tau k^2}\left( \Pi^{1,0} \CI_{o_2}(\lambda_{k} \CI_{\bar{o}_1}(\lambda_{k_2})\CI_{o_1}(\lambda_{k_2}) \CI_{o_1}(\lambda_{k})) \right)(\tau)\\ 
&= e^{-i \tau k^2}  \mathcal{K}_{o_2}^{k,0}( e^{i\xi(k_2^2 - k_2^2 - k^2)} ,1)(\tau)\\
&= -i\tau e^{-i \tau k^2}, 
\end{equs}
where we notice that the resonance structure for the integral $\Pi T_3$ is zero: 
\begin{equs}\label{ZeroRes}
P(k_1,..., k_n) + P_{o_2}(k) = k^2 + (k_2^2 - k_2^2 - k^2) = 0.
\end{equs} 
Hence exact integration takes place, and no approximation is necessary when applying the approximation operator $\CK_{o_2}$ on the integral corresponding to $T_3$.
Collecting the above computations, and plugging it in the expansion \eqref{expansionNLS1_1} yields the low-regularity scheme \eqref{schemeNLS1_Fourier}.

\underline{\bf Local Error:} It remains to show the first order local error bound: $\CO(\tau^2|\nabla| v)$. By Theorem \ref{thm:genloc} we have
\begin{equs}
V^{n,0}_k(\tau,v) - V_k(\tau,v) = \sum_{T \cdot \tilde{T} \in \CG^{0,k}_0(R)}\CO(\tau^2\CL_{\text{\tiny{low}}}^0(T\cdot \tilde{T},1)\bar{\Upsilon}^p(T)(v)\Upsilon^p(\tilde{T})(v)).
\end{equs}
We now calculate $\CL_{\text{\tiny{low}}}^0(T\cdot \tilde{T},1)$, for every pair $T\cdot \tilde{T}\in \CG_{0}^{0,k}(R)$. First, we consider the pair $F_1 = T_1\cdot T_1$, where we have $n_{+}(T_1) = 0$ and hence where no time discretization is necessary. By Definition \ref{def:Llow} of $\CL_{\text{\tiny{low}}}^r$ we have,
\begin{equs}\label{L_low1}
\CL_{\text{\tiny{low}}}^0(T_1\cdot {T_1},1) = (\Pi T_1)_{0} \CL_{\text{\tiny{low}}}^0(T_1,1) &+ (\Pi^{1,0}T_1)_{0} \CL_{\text{\tiny{low}}}^0({T_1},1) \\
&+ (\Pi^{1,0}T_1)_{1}\CL_{\text{\tiny{low}}}^{-1}({T_1},1),
\end{equs}
where $  \CL_{\text{\tiny{low}}}^0(T_1,1) = \CL_{\text{\tiny{low}}}^{-1}(T_1,1) = 1$.
Furthermore, we have $(\Pi T_1)_{0} = 1$ and $ e^{-i\tau k^2} = (\Pi^{1,0}T_1)_{0}$, 
and finally $(\Pi^{1,0}T_1)_{1} = 0$ since $T_1$ corresponds to a term of only zero-th order. Hence, from \eqref{L_low1} it follows that $\CL_{\text{\tiny{low}}}^{0}(T_1 \cdot T_1,1)= 2e^{-i\tau k^2}$ and as seen previously $\Upsilon^p(T_1) = v_k$. 

Next we consider the pair $F_2 = T_1 \cdot T_2$, where $n_{+}(T_2) = 1$.
Again by Definition \ref{def:Llow} of $\CL_{\text{\tiny{low}}}^r$ we have,
\begin{equs}\label{L_low2}
\CL_{\text{\tiny{low}}}^0(T_1\cdot {T_2},1) = (\Pi T_2)_{0} \CL_{\text{\tiny{low}}}^{-1}(T_1,1) &+ (\Pi^{1,-1}T_1)_{0} \CL_{\text{\tiny{low}}}^0({T_2},1),
\end{equs}
where $\Pi^{1,-1} = \Pi^{1}\CD^{-1} = \Pi^1$, $(\Pi^{1}T_1)_{0} = e^{-i\tau k^2}$, and $(\Pi T_2)_{0} = 1$. Next, we have
$$
\CL_{\text{\tiny{low}}}^0({T_2},1) = \CL_{\text{\tiny{low}}}^0(G_2,1) + R^{k,0}_{o_2,1}(\Pi^{1}G_2),
$$
with $T_2 = \CI_{o_1}\left(\lambda_k\CI_{o_2}(\lambda_k G_2)\right)$, and $G_2 = \CI_{\bar{o}_1}(\lambda_{-k})\CI_{o_1}(\lambda_{-k_2}) \CI_{o_1}(\lambda_{k_2})$. Given that we are performing a full Taylor approximation \eqref{CK} of $\Pi T_2$, we have that the local error $R^{k,0}_{o_2,1}(\Pi^{1}G_2)$ is given by \eqref{R_fullTaylor}. Given the resonance factor \eqref{NonZeroRes} it follows that $R_{o_2,1}^{k,0}(\Pi^1G_2) = R_{o_2,1}^{k,0}(e^{i\xi(k^2 - 2k_2^2)}) = 2(k^2 - k_2^2)$.
Lastly, $\CL_{\text{\tiny{low}}}^0(G_2,1)$ consists of linear combinations involving the propagator $e^{\pm i\tau k^2}$, and not of a polynomial of the frequencies $(k_1, ..., k_n)$. This is due to the fact that $G_2$ only consists of terms of zero-th order, and hence these computations are similar to those already made for $T_1\cdot T_1$. Therefore, 
$\CL_{\text{\tiny{low}}}^0({T_2},1) = \CO(\sum_{k_2 \in \Z^d}(k^2-k_2^2))$, and by \eqref{L_low2},
\begin{equs}\label{localerr-NLS1}
\CL_{\text{\tiny{low}}}^0(T_1\cdot {T_2},1)\bar{\Upsilon}^p(T_1)\Upsilon^p(T_2) = \CO(\sum_{k_2 \in \Z^d} (k^2 - k_2^2)\overline{v_{k}}v_{k}|v_{k_{2}}|^2).
\end{equs}
In physical space this yields an error of $\CO(\tau^2 (\nabla v *\nabla \tilde{v}) \|v\|_{L^2}^2 + (v*\tilde{v})\|\nabla v\|_{L^2}^2)  $.

Finally, the computations for the pair $F_3 = T_1 \cdot T_3$ is a simpler case of the above analysis for the pair $F_2$. Indeed, with the pairing of $F_3$ the resonance factor \eqref{ZeroRes} is zero and hence, no local error is induced by this term: exact integration takes place. Therefore, the local error is given by \eqref{localerr-NLS1} in Fourier space, and in physical space asks for $\CO(\tau|\nabla| v)$, namely requires only one spacial derivative on the initial data thanks to the use of convolutions.
\end{proof}
\subsubsection{Second order schemes}
For a second order approximation we have,
\begin{align*}
\mathop{\mathbb{E}}\left(|u_k(\tau,v^{\eta})|^2\right) &= {\mathbb{E}}\left((u^0_k(\tau,v^{\eta})+ u^1_k(\tau,v^{\eta}) + u^2_k(\tau,v^{\eta}))\right.\\ & \left.(\overline{u^0_k(\tau,v^{\eta})+ u^1_k(\tau,v^{\eta}) + u^2_k(\tau,v^{\eta})})\right) + \CO(\tau^3)\\
&= {\mathbb{E}}\left(|u^0_k(\tau,v^{\eta})|^2\right) +  2 Re{\mathbb{E}}\left(\overline{u^0_k(\tau,v^{\eta})}u^1_k(\tau,v^{\eta})\right)
\\ & + 2Re{\mathbb{E}}\left(\overline{u^0_k(\tau,v^{\eta})}u^2_k(\tau,v^{\eta})\right)
+ {\mathbb{E}}\left(|u^1_k(\tau,v^{\eta})|^2\right)  + \CO(\tau^3),
\end{align*}
where 
\begin{equs}\label{u^2NLS}
\begin{aligned}
u^2(\tau,v^{\eta}) 
&= -\int_{0}^{\tau}e^{i(\tau-\xi)\Delta} \Big( \int_{0}^{\xi}e^{i(\xi-\xi_1)\Delta}  \left((e^{i\xi_1 \Delta} v^{\eta})^2(e^{-i\xi_1\Delta}\overline{v^{\eta}})\right)d\xi_1  \\
& (2(e^{i\xi \Delta} v^{\eta})(e^{-i\xi \Delta}\overline{v^{\eta}})) \Big) d\xi\\
& + \int_{0}^{\tau}e^{i(\tau-\xi)\Delta} \left(\Big( \int_{0}^{\xi} 
e^{-i(\xi-\xi_1)\Delta} (e^{-i\xi_1 \Delta} \overline{v^{\eta}})^2(e^{i\xi_1\Delta}v^{\eta}) d\xi_1 \Big) (e^{i\xi \Delta} v^{\eta})^2 \right) d\xi\\
&:= u^{2,1}(\tau,v^{\eta}) + u^{2,2}(\tau,v^{\eta}),
\end{aligned} 
\end{equs}
and where $u^{2,1}(\tau,v^{\eta})$ and $u^{2,2}(\tau,v^{\eta})$ are encoded by the following two decorated trees respectively:
\begin{equs}
\begin{tikzpicture}[scale=0.2,baseline=-5]
\coordinate (root) at (0,2);
\coordinate (tri) at (0,0);
\coordinate (tri2) at (0,-2);
\coordinate (t1) at (-2,4);
\coordinate (t2) at (2,4);
\coordinate (t3) at (0,4);
\coordinate (t4) at (0,6);
\coordinate (t41) at (-2,8);
\coordinate (t42) at (2,8);
\coordinate (t43) at (0,10);
\draw[kernels2] (tri2) -- (tri);
\draw[kernels2,tinydots] (t1) -- (root);
\draw[kernels2] (t2) -- (root);
\draw[kernels2] (t3) -- (root);
\draw[symbols] (root) -- (tri);
\draw[symbols] (t3) -- (t4);
\draw[kernels2,tinydots] (t4) -- (t41);
\draw[kernels2] (t4) -- (t42);
\draw[kernels2] (t4) -- (t43);
\node[not] (rootnode) at (tri2) {};
\node[not] (rootnode) at (root) {};
\node[not] (rootnode) at (t4) {};
\node[not] (rootnode) at (t3) {};
\node[not] (trinode) at (tri) {};
\node[var] (rootnode) at (t1) {\tiny{$ k_{\tiny{4}} $}};
\node[var] (rootnode) at (t41) {\tiny{$ k_{\tiny{1}} $}};
\node[var] (rootnode) at (t42) {\tiny{$ k_{\tiny{2}} $}};
\node[var] (rootnode) at (t43) {\tiny{$ k_{\tiny{3}} $}};
\node[var] (trinode) at (t2) {\tiny{$ k_5 $}};
\end{tikzpicture}, \quad
\begin{tikzpicture}[scale=0.2,baseline=-5]
\coordinate (root) at (0,2);
\coordinate (tri) at (0,0);
\coordinate (tri2) at (0,-2);
\coordinate (t1) at (-2,4);
\coordinate (t2) at (2,4);
\coordinate (t3) at (0,4);
\coordinate (t4) at (0,6);
\coordinate (t41) at (-2,8);
\coordinate (t42) at (2,8);
\coordinate (t43) at (0,10);
\draw[kernels2] (tri2) -- (tri);
\draw[kernels2] (t1) -- (root);
\draw[kernels2] (t2) -- (root);
\draw[kernels2,tinydots] (t3) -- (root);
\draw[symbols] (root) -- (tri);
\draw[symbols,tinydots] (t3) -- (t4);
\draw[kernels2] (t4) -- (t41);
\draw[kernels2,tinydots] (t4) -- (t42);
\draw[kernels2,tinydots] (t4) -- (t43);
\node[not] (rootnode) at (tri2) {};
\node[not] (rootnode) at (root) {};
\node[not] (rootnode) at (t4) {};
\node[not] (rootnode) at (t3) {};
\node[not] (trinode) at (tri) {};
\node[var] (rootnode) at (t1) {\tiny{$ k_{\tiny{4}} $}};
\node[var] (rootnode) at (t41) {\tiny{$ k_{\tiny{1}} $}};
\node[var] (rootnode) at (t42) {\tiny{$ k_{\tiny{2}} $}};
\node[var] (rootnode) at (t43) {\tiny{$ k_{\tiny{3}} $}};
\node[var] (trinode) at (t2) {\tiny{$ k_5 $}};
\end{tikzpicture}.
\end{equs}

Using Wick's formula we can once again obtain the possible pairings which come at play when wanting to compute each of the above terms. This determines the set of decorated pair of trees $\CG_0^{1,k}(R)$, of size at most two, which we detail below. 

From the previous Section \ref{NLS1}, we have that the pair of decorated trees which encodes the first term $\mathbb{E}(|u^0_k(\tau,v^{\eta})|^2)$ is given by $F_1 =T_1 \cdot T_1$, defined in \eqref{NLS_T1}. Similarly, we have that the second term $  {\mathbb{E}}[\overline{u^0_k(\tau,v^{\eta})}u^1_k(\tau,v^{\eta})]$ is encoded by the pairings $F_2=T_1 \cdot T_2 $ and $F_3= T_1 \cdot T_3$, defined in \eqref{NLS_T2} and \eqref{NLS_T3} respectively.
Next, we consider the possible pairings we need to take into account for the computation of ${\mathbb{E}}[\overline{u^0_k(\tau,v^{\eta})}u^2_k(\tau,v^{\eta})]$, where $u^2$ is the sum of two second order terms given in \eqref{u^2NLS}. We start by considering the different pairings between the frequencies of $\overline{u^0}$ and $u^{2,1}$, those for $u^{2,2}$ will then follow. We can classify the different pairings into five groups, which we call the (five) {\it principal} pairings. This classification is based upon two factors: 
\begin{itemize}
\item[(a)]
first, whether the frequency pairing is made internally within the same layer/integral in the tree, 
\item[(b)] secondly, whether the pairing is made between frequencies of the same or opposite sign. 
\end{itemize}

We start by looking at the pairings which consist of at least one pairing of two frequencies made internally within the same integral. There are then two possibilities to consider, which depend on the sign of the frequency pairing.
Namely, we have that a pairing between two frequencies of opposite sign will result in a null resonance factor, as was the case in \eqref{ZeroRes} for $F_3$, and a pairing between two frequencies of the same sign will results in a resonance structure of the form \eqref{NonZeroRes}, namely as a difference of the squares of the frequencies. Hence, there are solely four cases of pairings which we need to consider:
\begin{itemize} 
\item
First, the case when the pairings are all of opposite signs: $\{k_1 = k_2, k_4 = k_5, k = k_3 \} \cup \{k_1 = k_3,k_2 = k, k_4 = k_5 \} \cup \{k_1 = k_2, k_3 = k_4, k = k_5 \} \cup \{k_1 = k_3, k_2 = k_4, k = k_5 \}$.
Without loss of generality we take the first pairing in the union above as the representative of this class, which we include in the set $\CG^{1,k}_0(R)$:
\begin{equs}\label{NLS_T4}
F_4 =  T_1 \cdot T_4, \quad T_1 = \begin{tikzpicture}[scale=0.2,baseline=-5]
\coordinate (root) at (0,1);
\coordinate (tri) at (0,-1);
\draw[kernels2] (tri) -- (root);
\node[var] (rootnode) at (root) {\tiny{$ k $}};
\node[not] (trinode) at (tri) {};
\end{tikzpicture}, \quad T_4 = \begin{tikzpicture}[scale=0.2,baseline=-5]
\coordinate (root) at (0,2);
\coordinate (tri) at (0,0);
\coordinate (tri2) at (0,-2);
\coordinate (t1) at (-2,4);
\coordinate (t2) at (2,4);
\coordinate (t3) at (0,4);
\coordinate (t4) at (0,6);
\coordinate (t41) at (-2,8);
\coordinate (t42) at (2,8);
\coordinate (t43) at (0,10);
\draw[kernels2] (tri2) -- (tri);
\draw[kernels2,tinydots] (t1) -- (root);
\draw[kernels2] (t2) -- (root);
\draw[kernels2] (t3) -- (root);
\draw[symbols] (root) -- (tri);
\draw[symbols] (t3) -- (t4);
\draw[kernels2,tinydots] (t4) -- (t41);
\draw[kernels2] (t4) -- (t42);
\draw[kernels2] (t4) -- (t43);
\node[not] (rootnode) at (tri2) {};
\node[not] (rootnode) at (root) {};
\node[not] (rootnode) at (t4) {};
\node[not] (rootnode) at (t3) {};
\node[not] (trinode) at (tri) {};
\node[var] (rootnode) at (t1) {\tiny{$ k_{\tiny{4}} $}};
\node[var] (rootnode) at (t41) {\tiny{$ k_{\tiny{2}} $}};
\node[var] (rootnode) at (t42) {\tiny{$ k $}};
\node[var] (rootnode) at (t43) {\tiny{$ k_{\tiny{2}} $}};
\node[var] (trinode) at (t2) {\tiny{$ k_4 $}};
\end{tikzpicture}.
\end{equs}
\item Secondly, the case where frequencies of opposite signs are paired in the inner-integral (or tree), whereas frequencies of the same signs are paired in the outer-integral (or tree):
$\{k_1 = k_2, k_3 = -k_5, k = -k_4 \}, \cup \{ k_1 = k_3, k_2 = -k_5, k = -k_4\}$.
We once again take the first pairing in the union above as the representative of this class, which we include in the set $  \CG^{1,k}_0(R) $:
\begin{equs}
F_5 =  T_1 \cdot T_5, \quad T_1 = \begin{tikzpicture}[scale=0.2,baseline=-5]
\coordinate (root) at (0,1);
\coordinate (tri) at (0,-1);
\draw[kernels2] (tri) -- (root);
\node[var] (rootnode) at (root) {\tiny{$ k $}};
\node[not] (trinode) at (tri) {};
\end{tikzpicture}, \quad T_5 = \begin{tikzpicture}[scale=0.2,baseline=-5]
\coordinate (root) at (0,2);
\coordinate (tri) at (0,0);
\coordinate (tri2) at (0,-2);
\coordinate (t1) at (-2,4);
\coordinate (t2) at (2,4);
\coordinate (t3) at (0,4);
\coordinate (t4) at (0,6);
\coordinate (t41) at (-2,8);
\coordinate (t42) at (2,8);
\coordinate (t43) at (0,10);
\draw[kernels2] (tri2) -- (tri);
\draw[kernels2,tinydots] (t1) -- (root);
\draw[kernels2] (t2) -- (root);
\draw[kernels2] (t3) -- (root);
\draw[symbols] (root) -- (tri);
\draw[symbols] (t3) -- (t4);
\draw[kernels2,tinydots] (t4) -- (t41);
\draw[kernels2] (t4) -- (t42);
\draw[kernels2] (t4) -- (t43);
\node[not] (rootnode) at (tri2) {};
\node[not] (rootnode) at (root) {};
\node[not] (rootnode) at (t4) {};
\node[not] (rootnode) at (t3) {};
\node[not] (trinode) at (tri) {};
\node[var] (rootnode) at (t1) {\tiny{$ \bar{k} $}};
\node[var] (rootnode) at (t41) {\tiny{$ k_{\tiny{2}} $}};
\node[var] (rootnode) at (t42) {\tiny{$ k_{\tiny{4}} $}};
\node[var] (rootnode) at (t43) {\tiny{$ k_{\tiny{2}} $}};
\node[var] (trinode) at (t2) {\tiny{$ \bar{k}_{\tiny{4}} $}};
\end{tikzpicture}.
\end{equs}


\item Thirdly, the case where frequencies of the same signs are paired within the inner-integral (or tree), whereas frequencies of opposite signs are paired in the outer-integral (or tree):
$\{k_2 = -k_3, k = -k_1, k_4 = k_5 \} \cup \{k_2 = -k_3, k_4 = -k_1, k = k_5 \}.$
We take the first pairing in the union above as the representative of this class:
\begin{equs}
F_6 = T_1 \cdot T_6, \quad T_1 = \begin{tikzpicture}[scale=0.2,baseline=-5]
\coordinate (root) at (0,1);
\coordinate (tri) at (0,-1);
\draw[kernels2] (tri) -- (root);
\node[var] (rootnode) at (root) {\tiny{$ k $}};
\node[not] (trinode) at (tri) {};
\end{tikzpicture}, \quad T_6 = \begin{tikzpicture}[scale=0.2,baseline=-5]
\coordinate (root) at (0,2);
\coordinate (tri) at (0,0);
\coordinate (tri2) at (0,-2);
\coordinate (t1) at (-2,4);
\coordinate (t2) at (2,4);
\coordinate (t3) at (0,4);
\coordinate (t4) at (0,6);
\coordinate (t41) at (-2,8);
\coordinate (t42) at (2,8);
\coordinate (t43) at (0,10);
\draw[kernels2] (tri2) -- (tri);
\draw[kernels2,tinydots] (t1) -- (root);
\draw[kernels2] (t2) -- (root);
\draw[kernels2] (t3) -- (root);
\draw[symbols] (root) -- (tri);
\draw[symbols] (t3) -- (t4);
\draw[kernels2,tinydots] (t4) -- (t41);
\draw[kernels2] (t4) -- (t42);
\draw[kernels2] (t4) -- (t43);
\node[not] (rootnode) at (tri2) {};
\node[not] (rootnode) at (root) {};
\node[not] (rootnode) at (t4) {};
\node[not] (rootnode) at (t3) {};
\node[not] (trinode) at (tri) {};
\node[var] (rootnode) at (t1) {\tiny{$k_{\tiny{4}} $}};
\node[var] (rootnode) at (t41) {\tiny{$ \bar{k }$}};
\node[var] (rootnode) at (t42) {\tiny{$ \bar{k}_{\tiny{2}} $}};
\node[var] (rootnode) at (t43) {\tiny{$ k_{\tiny{2}} $}};
\node[var] (trinode) at (t2) {\tiny{$ k_{\tiny{4}} $}};
\end{tikzpicture}.
\end{equs}
\item Fourthly, we are left with the case where the internal pairings are of the same sign.  
The only pairing which corresponds to this case is given by: $\{k_2 = -k_3, k_1 = k_5, k=-k_4 \}$ .
\begin{equs}
F_7 =  T_1 \cdot T_7, \quad T_1 = \begin{tikzpicture}[scale=0.2,baseline=-5]
\coordinate (root) at (0,1);
\coordinate (tri) at (0,-1);
\draw[kernels2] (tri) -- (root);
\node[var] (rootnode) at (root) {\tiny{$ k $}};
\node[not] (trinode) at (tri) {};
\end{tikzpicture}, \quad T_7 = \begin{tikzpicture}[scale=0.2,baseline=-5]
\coordinate (root) at (0,2);
\coordinate (tri) at (0,0);
\coordinate (tri2) at (0,-2);
\coordinate (t1) at (-2,4);
\coordinate (t2) at (2,4);
\coordinate (t3) at (0,4);
\coordinate (t4) at (0,6);
\coordinate (t41) at (-2,8);
\coordinate (t42) at (2,8);
\coordinate (t43) at (0,10);
\draw[kernels2] (tri2) -- (tri);
\draw[kernels2,tinydots] (t1) -- (root);
\draw[kernels2] (t2) -- (root);
\draw[kernels2] (t3) -- (root);
\draw[symbols] (root) -- (tri);
\draw[symbols] (t3) -- (t4);
\draw[kernels2,tinydots] (t4) -- (t41);
\draw[kernels2] (t4) -- (t42);
\draw[kernels2] (t4) -- (t43);
\node[not] (rootnode) at (tri2) {};
\node[not] (rootnode) at (root) {};
\node[not] (rootnode) at (t4) {};
\node[not] (rootnode) at (t3) {};
\node[not] (trinode) at (tri) {};
\node[var] (rootnode) at (t1) {\tiny{$\bar{k} $}};
\node[var] (rootnode) at (t41) {\tiny{$ k_{\tiny{5}}$}};
\node[var] (rootnode) at (t42) {\tiny{$ \bar{k}_{\tiny{2}} $}};
\node[var] (rootnode) at (t43) {\tiny{$ k_{\tiny{2}} $}};
\node[var] (trinode) at (t2) {\tiny{$ k_{\tiny{5}} $}};
\end{tikzpicture}.
\end{equs}
\end{itemize}
Lastly, going back to point (a.) in the above we are left to consider pairings which are all external, meaning that there are no pairings of two frequencies made internally within the same integral. Hence, no simplification will be made at the level of the resonance structure of each of the integrals. This defines the last class to be considered. There are six possibilities of such pairings: $\{ k = -k_1, k_2 = k_4, k_3 = - k_5 \} \cup \{k = -k_1, k_2 = -k_5, k_3 = k_4 \} \cup \{k = k_2, k_1 = -k_4, k_3 = -k_5 \} \cup \{k = k_2, k_1 = k_5, k_3 = k_4 \} \cup \{k=k_3, k_2 = -k_5, k_1 = -k_4 \} \cup \{k=k_3, k_2 = k_4, k_1 = k_5 \}$. We take the first pairing in the union above as the representative of this class:
\begin{equs}
F_8 =  T_1 \cdot T_8, \quad T_1 = \begin{tikzpicture}[scale=0.2,baseline=-5]
\coordinate (root) at (0,1);
\coordinate (tri) at (0,-1);
\draw[kernels2] (tri) -- (root);
\node[var] (rootnode) at (root) {\tiny{$ k $}};
\node[not] (trinode) at (tri) {};
\end{tikzpicture}, \quad T_8 = \begin{tikzpicture}[scale=0.2,baseline=-5]
\coordinate (root) at (0,2);
\coordinate (tri) at (0,0);
\coordinate (tri2) at (0,-2);
\coordinate (t1) at (-2,4);
\coordinate (t2) at (2,4);
\coordinate (t3) at (0,4);
\coordinate (t4) at (0,6);
\coordinate (t41) at (-2,8);
\coordinate (t42) at (2,8);
\coordinate (t43) at (0,10);
\draw[kernels2] (tri2) -- (tri);
\draw[kernels2,tinydots] (t1) -- (root);
\draw[kernels2] (t2) -- (root);
\draw[kernels2] (t3) -- (root);
\draw[symbols] (root) -- (tri);
\draw[symbols] (t3) -- (t4);
\draw[kernels2,tinydots] (t4) -- (t41);
\draw[kernels2] (t4) -- (t42);
\draw[kernels2] (t4) -- (t43);
\node[not] (rootnode) at (tri2) {};
\node[not] (rootnode) at (root) {};
\node[not] (rootnode) at (t4) {};
\node[not] (rootnode) at (t3) {};
\node[not] (trinode) at (tri) {};
\node[var] (rootnode) at (t1) {\tiny{$k_{\tiny{2}}$}};
\node[var] (rootnode) at (t41) {\tiny{$ \bar{k}$}};
\node[var] (rootnode) at (t42) {\tiny{$ \bar{k}_{\tiny{4}} $}};
\node[var] (rootnode) at (t43) {\tiny{$ k_{\tiny{2}} $}};
\node[var] (trinode) at (t2) {\tiny{$ k_{\tiny{4}} $}};
\end{tikzpicture}.
\end{equs}

Analogously, one can make the analysis for the pairings of $\overline{u^0}$ and $u^{2,2}$ to obtain that five representatives are given by: $F_i =  T_1 \cdot T_i,$ for $i\in \{9,...,13 \} $, where
\begin{equs}
T_9 & = \begin{tikzpicture}[scale=0.2,baseline=-5]
\coordinate (root) at (0,2);
\coordinate (tri) at (0,0);
\coordinate (tri2) at (0,-2);
\coordinate (t1) at (-2,4);
\coordinate (t2) at (2,4);
\coordinate (t3) at (0,4);
\coordinate (t4) at (0,6);
\coordinate (t41) at (-2,8);
\coordinate (t42) at (2,8);
\coordinate (t43) at (0,10);
\draw[kernels2] (tri2) -- (tri);
\draw[kernels2] (t1) -- (root);
\draw[kernels2] (t2) -- (root);
\draw[kernels2,tinydots] (t3) -- (root);
\draw[symbols] (root) -- (tri);
\draw[symbols,tinydots] (t3) -- (t4);
\draw[kernels2] (t4) -- (t41);
\draw[kernels2,tinydots] (t4) -- (t42);
\draw[kernels2,tinydots] (t4) -- (t43);
\node[not] (rootnode) at (tri2) {};
\node[not] (rootnode) at (root) {};
\node[not] (rootnode) at (t4) {};
\node[not] (rootnode) at (t3) {};
\node[not] (trinode) at (tri) {};
\node[var] (rootnode) at (t1) {\tiny{$ k $}};
\node[var] (rootnode) at (t41) {\tiny{$ k_{\tiny{2}} $}};
\node[var] (rootnode) at (t42) {\tiny{$ k_{\tiny{4}} $}};
\node[var] (rootnode) at (t43) {\tiny{$ k_{\tiny{2}} $}};
\node[var] (trinode) at (t2) {\tiny{$ k_4 $}};
\end{tikzpicture}, \quad T_{10}=
\begin{tikzpicture}[scale=0.2,baseline=-5]
\coordinate (root) at (0,2);
\coordinate (tri) at (0,0);
\coordinate (tri2) at (0,-2);
\coordinate (t1) at (-2,4);
\coordinate (t2) at (2,4);
\coordinate (t3) at (0,4);
\coordinate (t4) at (0,6);
\coordinate (t41) at (-2,8);
\coordinate (t42) at (2,8);
\coordinate (t43) at (0,10);
\draw[kernels2] (tri2) -- (tri);
\draw[kernels2] (t1) -- (root);
\draw[kernels2] (t2) -- (root);
\draw[kernels2,tinydots] (t3) -- (root);
\draw[symbols] (root) -- (tri);
\draw[symbols,tinydots] (t3) -- (t4);
\draw[kernels2] (t4) -- (t41);
\draw[kernels2,tinydots] (t4) -- (t42);
\draw[kernels2,tinydots] (t4) -- (t43);
\node[not] (rootnode) at (tri2) {};
\node[not] (rootnode) at (root) {};
\node[not] (rootnode) at (t4) {};
\node[not] (rootnode) at (t3) {};
\node[not] (trinode) at (tri) {};
\node[var] (rootnode) at (t1) {\tiny{$ k_{\tiny{4}} $}};
\node[var] (rootnode) at (t41) {\tiny{$ k_{\tiny{2}} $}};
\node[var] (rootnode) at (t42) {\tiny{$ \bar{k}  $}};
\node[var] (rootnode) at (t43) {\tiny{$ k_{\tiny{2}} $}};
\node[var] (trinode) at (t2) {\tiny{$ \bar{k}_4 $}};
\end{tikzpicture},\quad T_{11} = 
\begin{tikzpicture}[scale=0.2,baseline=-5]
\coordinate (root) at (0,2);
\coordinate (tri) at (0,0);
\coordinate (tri2) at (0,-2);
\coordinate (t1) at (-2,4);
\coordinate (t2) at (2,4);
\coordinate (t3) at (0,4);
\coordinate (t4) at (0,6);
\coordinate (t41) at (-2,8);
\coordinate (t42) at (2,8);
\coordinate (t43) at (0,10);
\draw[kernels2] (tri2) -- (tri);
\draw[kernels2] (t1) -- (root);
\draw[kernels2] (t2) -- (root);
\draw[kernels2,tinydots] (t3) -- (root);
\draw[symbols] (root) -- (tri);
\draw[symbols,tinydots] (t3) -- (t4);
\draw[kernels2] (t4) -- (t41);
\draw[kernels2,tinydots] (t4) -- (t42);
\draw[kernels2,tinydots] (t4) -- (t43);
\node[not] (rootnode) at (tri2) {};
\node[not] (rootnode) at (root) {};
\node[not] (rootnode) at (t4) {};
\node[not] (rootnode) at (t3) {};
\node[not] (trinode) at (tri) {};
\node[var] (rootnode) at (t1) {\tiny{$ k_{\tiny{4}} $}};
\node[var] (rootnode) at (t41) {\tiny{$ \bar{k}_{\tiny{4}} $}};
\node[var] (rootnode) at (t42) {\tiny{$ \bar{k}_{\tiny{2}}  $}};
\node[var] (rootnode) at (t43) {\tiny{$ k_{\tiny{2}} $}};
\node[var] (trinode) at (t2) {\tiny{$k $}};
\end{tikzpicture}, \\ 
\quad T_{12} & = 
\begin{tikzpicture}[scale=0.2,baseline=-5]
\coordinate (root) at (0,2);
\coordinate (tri) at (0,0);
\coordinate (tri2) at (0,-2);
\coordinate (t1) at (-2,4);
\coordinate (t2) at (2,4);
\coordinate (t3) at (0,4);
\coordinate (t4) at (0,6);
\coordinate (t41) at (-2,8);
\coordinate (t42) at (2,8);
\coordinate (t43) at (0,10);
\draw[kernels2] (tri2) -- (tri);
\draw[kernels2] (t1) -- (root);
\draw[kernels2] (t2) -- (root);
\draw[kernels2,tinydots] (t3) -- (root);
\draw[symbols] (root) -- (tri);
\draw[symbols,tinydots] (t3) -- (t4);
\draw[kernels2] (t4) -- (t41);
\draw[kernels2,tinydots] (t4) -- (t42);
\draw[kernels2,tinydots] (t4) -- (t43);
\node[not] (rootnode) at (tri2) {};
\node[not] (rootnode) at (root) {};
\node[not] (rootnode) at (t4) {};
\node[not] (rootnode) at (t3) {};
\node[not] (trinode) at (tri) {};
\node[var] (rootnode) at (t1) {\tiny{$ k_{\tiny{4}} $}};
\node[var] (rootnode) at (t41) {\tiny{$ k $}};
\node[var] (rootnode) at (t42) {\tiny{$ \bar{k}_{\tiny{2}}  $}};
\node[var] (rootnode) at (t43) {\tiny{$ k_{\tiny{2}} $}};
\node[var] (trinode) at (t2) {\tiny{$\bar{k}_{\tiny{4}} $}};
\end{tikzpicture},
\quad T_{13} = 
\begin{tikzpicture}[scale=0.2,baseline=-5]
\coordinate (root) at (0,2);
\coordinate (tri) at (0,0);
\coordinate (tri2) at (0,-2);
\coordinate (t1) at (-2,4);
\coordinate (t2) at (2,4);
\coordinate (t3) at (0,4);
\coordinate (t4) at (0,6);
\coordinate (t41) at (-2,8);
\coordinate (t42) at (2,8);
\coordinate (t43) at (0,10);
\draw[kernels2] (tri2) -- (tri);
\draw[kernels2] (t1) -- (root);
\draw[kernels2] (t2) -- (root);
\draw[kernels2,tinydots] (t3) -- (root);
\draw[symbols] (root) -- (tri);
\draw[symbols,tinydots] (t3) -- (t4);
\draw[kernels2] (t4) -- (t41);
\draw[kernels2,tinydots] (t4) -- (t42);
\draw[kernels2,tinydots] (t4) -- (t43);
\node[not] (rootnode) at (tri2) {};
\node[not] (rootnode) at (root) {};
\node[not] (rootnode) at (t4) {};
\node[not] (rootnode) at (t3) {};
\node[not] (trinode) at (tri) {};
\node[var] (rootnode) at (t1) {\tiny{$ k_{\tiny{2}} $}};
\node[var] (rootnode) at (t41) {\tiny{$ k $}};
\node[var] (rootnode) at (t42) {\tiny{$ k_{\tiny{4}}  $}};
\node[var] (rootnode) at (t43) {\tiny{$ k_{\tiny{2}} $}};
\node[var] (trinode) at (t2) {\tiny{$k_{\tiny{4}} $}};
\end{tikzpicture}.
\end{equs}

Finally, also using the same case analysis as made previously we can construct the possible pairings between $\overline{u^1}$ and $u^1$ in order to compute the last term ${\mathbb{E}}(|u^1_k(\tau,v^{\eta})|^2)$. We have the same case figures as seen previously: six possible external pairings, four different possible pairings which are internal and of opposite signs, by symmetry four possibilities of pairings which are internal and one of opposite and the other of the same signs, and finally one possible internal pairing of the same signs. A representative of each of the above four class is given by: $F_i =  \tilde{T_i} \cdot T_i,$ for $i\in \{14,...,17 \} $, where
\begin{equs}
\tilde{T}_{14} &= \begin{tikzpicture}[scale=0.2,baseline=-5]
\coordinate (root) at (0,-1);
\coordinate (t3) at (0,1);
\coordinate (t4) at (0,3);
\coordinate (t41) at (-2,5);
\coordinate (t42) at (2,5);
\coordinate (t43) at (0,7);
\draw[kernels2] (t3) -- (root);
\draw[symbols] (t3) -- (t4);
\draw[kernels2,tinydots] (t4) -- (t41);
\draw[kernels2] (t4) -- (t42);
\draw[kernels2] (t4) -- (t43);
\node[not] (rootnode) at (root) {};
\node[not] (rootnode) at (t4) {};
\node[not] (rootnode) at (t3) {};
\node[var] (rootnode) at (t41) {\tiny{$ k_1 $}};
\node[var] (rootnode) at (t42) {\tiny{$ k $}};
\node[var] (rootnode) at (t43) {\tiny{$  k_{1} $}};
\end{tikzpicture}, \ \  T_{14} =  
 \begin{tikzpicture}[scale=0.2,baseline=-5]
\coordinate (root) at (0,-1);
\coordinate (t3) at (0,1);
\coordinate (t4) at (0,3);
\coordinate (t41) at (-2,5);
\coordinate (t42) at (2,5);
\coordinate (t43) at (0,7);
\draw[kernels2] (t3) -- (root);
\draw[symbols] (t3) -- (t4);
\draw[kernels2,tinydots] (t4) -- (t41);
\draw[kernels2] (t4) -- (t42);
\draw[kernels2] (t4) -- (t43);
\node[not] (rootnode) at (root) {};
\node[not] (rootnode) at (t4) {};
\node[not] (rootnode) at (t3) {};
\node[var] (rootnode) at (t41) {\tiny{$k_2 $}};
\node[var] (rootnode) at (t42) {\tiny{$ k $}};
\node[var] (rootnode) at (t43) {\tiny{$  k_{2} $}};
\end{tikzpicture} = T_3,\quad
\tilde{T}_{15} =  \begin{tikzpicture}[scale=0.2,baseline=-5]
\coordinate (root) at (0,-1);
\coordinate (t3) at (0,1);
\coordinate (t4) at (0,3);
\coordinate (t41) at (-2,5);
\coordinate (t42) at (2,5);
\coordinate (t43) at (0,7);
\draw[kernels2] (t3) -- (root);
\draw[symbols] (t3) -- (t4);
\draw[kernels2,tinydots] (t4) -- (t41);
\draw[kernels2] (t4) -- (t42);
\draw[kernels2] (t4) -- (t43);
\node[not] (rootnode) at (root) {};
\node[not] (rootnode) at (t4) {};
\node[not] (rootnode) at (t3) {};
\node[var] (rootnode) at (t41) {\tiny{$ k_1 $}};
\node[var] (rootnode) at (t42) {\tiny{$ \bar{k}  $}};
\node[var] (rootnode) at (t43) {\tiny{$ k_1  $}};
\end{tikzpicture} , \
 T_{15} = 
\begin{tikzpicture}[scale=0.2,baseline=-5]
\coordinate (root) at (0,-1);
\coordinate (t3) at (0,1);
\coordinate (t4) at (0,3);
\coordinate (t41) at (-2,5);
\coordinate (t42) at (2,5);
\coordinate (t43) at (0,7);
\draw[kernels2] (t3) -- (root);
\draw[symbols] (t3) -- (t4);
\draw[kernels2,tinydots] (t4) -- (t41);
\draw[kernels2] (t4) -- (t42);
\draw[kernels2] (t4) -- (t43);
\node[not] (rootnode) at (root) {};
\node[not] (rootnode) at (t4) {};
\node[not] (rootnode) at (t3) {};
\node[var] (rootnode) at (t41) {\tiny{$k $}};
\node[var] (rootnode) at (t42) {\tiny{$ \bar{k}_2  $}};
\node[var] (rootnode) at (t43) {\tiny{$ k_2  $}};
\end{tikzpicture} = T_2, \\
& \tilde{T}_{16} =  \begin{tikzpicture}[scale=0.2,baseline=-5]
\coordinate (root) at (0,-1);
\coordinate (t3) at (0,1);
\coordinate (t4) at (0,3);
\coordinate (t41) at (-2,5);
\coordinate (t42) at (2,5);
\coordinate (t43) at (0,7);
\draw[kernels2] (t3) -- (root);
\draw[symbols] (t3) -- (t4);
\draw[kernels2,tinydots] (t4) -- (t41);
\draw[kernels2] (t4) -- (t42);
\draw[kernels2] (t4) -- (t43);
\node[not] (rootnode) at (root) {};
\node[not] (rootnode) at (t4) {};
\node[not] (rootnode) at (t3) {};
\node[var] (rootnode) at (t41) {\tiny{$ k $}};
\node[var] (rootnode) at (t42) {\tiny{$ k_1 $}};
\node[var] (rootnode) at (t43) {\tiny{$  \bar{k}_{1} $}};
\end{tikzpicture} , \  T_{16}  =
 \begin{tikzpicture}[scale=0.2,baseline=-5]
\coordinate (root) at (0,-1);
\coordinate (t3) at (0,1);
\coordinate (t4) at (0,3);
\coordinate (t41) at (-2,5);
\coordinate (t42) at (2,5);
\coordinate (t43) at (0,7);
\draw[kernels2] (t3) -- (root);
\draw[symbols] (t3) -- (t4);
\draw[kernels2,tinydots] (t4) -- (t41);
\draw[kernels2] (t4) -- (t42);
\draw[kernels2] (t4) -- (t43);
\node[not] (rootnode) at (root) {};
\node[not] (rootnode) at (t4) {};
\node[not] (rootnode) at (t3) {};
\node[var] (rootnode) at (t41) {\tiny{$k $}};
\node[var] (rootnode) at (t42) {\tiny{$ \bar{k}_2 $}};
\node[var] (rootnode) at (t43) {\tiny{$  k_{2} $}};
\end{tikzpicture} = T_{15}, 
\quad
\tilde{T}_{17} =  \begin{tikzpicture}[scale=0.2,baseline=-5]
\coordinate (root) at (0,-1);
\coordinate (t3) at (0,1);
\coordinate (t4) at (0,3);
\coordinate (t41) at (-2,5);
\coordinate (t42) at (2,5);
\coordinate (t43) at (0,7);
\draw[kernels2] (t3) -- (root);
\draw[symbols] (t3) -- (t4);
\draw[kernels2,tinydots] (t4) -- (t41);
\draw[kernels2] (t4) -- (t42);
\draw[kernels2] (t4) -- (t43);
\node[not] (rootnode) at (root) {};
\node[not] (rootnode) at (t4) {};
\node[not] (rootnode) at (t3) {};
\node[var] (rootnode) at (t41) {\tiny{$ k_1 $}};
\node[var] (rootnode) at (t42) {\tiny{$ k_3 $}};
\node[var] (rootnode) at (t43) {\tiny{$  k_{2} $}};
\end{tikzpicture} =  T_{17}.
%
\end{equs}
We denote by $m_{F_i}, \ i\in [4,17]$ the multiplicative constant representing the number of elements in the class who's representative is $F_i$. Namely, it follows from the above analysis that 
\begin{equs}\label{m_F} \begin{aligned}
&m_{F_i} = 6, \ \text{for} \ i \in \{8,13, 17 \}, \\
&m_{F_i} = 2, \ \text{for} \  i \in \{5, 10,6,11 \}, \\
&m_{F_i} = 1, \ \text{for} \ i \in \{7,12,16 \}, \\
&m_{F_{i}} = 4, \ \text{for} \ i \in \{4,9,14,15 \}.
\end{aligned}
\end{equs}

In conclusion, we have that the set of representatives of approximated paired forests is given by: $\CG^{1,k}_0(R) = \{F_i \}_{i= \{1,...,17\}}$.
We now present a second order low-regularity approximation to the second order moments of $u_k(\tau,v^{\eta})$, solution of \eqref{NLS} with initial data \eqref{ic}.
\begin{corollary}\label{cor_schemeNLS2}
At second order our general low regularity scheme \eqref{schema_general} takes the form:
\begin{equs}\label{schemeNLS2_Fourier}
\begin{aligned}
V_{k}^{2,1}(\tau,v) &= v_k\overline{v_k}
+ 2\tau^2 v_k \overline{v_k}\left( |k|^2 \sum_{k_2 \in \Z^d}|v_{k_2}|^2 - \sum_{k_2 \in \Z^d}|k_2|^2 |v_{k_2}|^2\right) \\
&-6\tau^2\Big( 6 v_k \overline{v_k} \big(\sum_{k_2 \in \Z^d} |v_{k_2}|^2\big)^2 - \sum\limits_{\substack{k_1,k_2,k_3 \in \Z^d\\ -k_1 + k_2 + k_3 = k}} |v_{k_1}|^2|v_{k_2}|^2|v_{k_3}|^2 \Big). \qquad
\end{aligned}
\end{equs}
The scheme \eqref{schemeNLS2_Fourier} is locally of order $\CO(\tau^3|\nabla|^2v)$. The above is the Fourier coefficient associated to the following scheme written in physical space: 
\begin{equs}\label{schemeNLS2} \begin{aligned}
u^{\ell+1} &= u^\ell * \tilde{u}^\ell 
 + 2\tau^2\left((\nabla u^\ell * \nabla \tilde{u}^\ell)\|u^\ell\|_{L^2}^2\right. \\
&\left. \quad - (u^\ell * \tilde{u}^\ell) \|\nabla u^\ell\|_{L^2}^2\right)- 6\tau^2\left( 6(u^\ell*\tilde{u}^\ell)\|u^\ell\|_{L^2}^4 - (u^\ell*\tilde{u}^\ell)^3\right),
\end{aligned}
\end{equs}
where $ \tilde{u}^{\ell}(x) = \overline{u^{\ell}(-x)} $. 
\end{corollary}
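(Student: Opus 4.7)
The plan is to apply the general scheme \eqref{general_scheme_2} with $r=1$ and $n=2$, summing over the seventeen representative paired forests $F_i \in \CG^{1,k}_0(R)$ identified above, each weighted by its multiplicity $m_{F_i}$ from \eqref{m_F}. For each pair $F_i = T \cdot \tilde T$ I compute the symmetry factors $S(T), S(\tilde T)$, the coefficients $\Upsilon^{p}(T)(v), \Upsilon^{p}(\tilde T)(v)$ via the recursion \eqref{upsi} applied to $p(v,\bar v) = |v|^2 v$, and the truncated product $\CQ_{\leq 2}\bigl(\bar \Pi^{2,1} T \cdot \Pi^{2,1}\tilde T\bigr)(\tau)$. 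The coefficients for $T_1, T_2, T_3$ are already computed in \eqref{upsilon1}; for the second-order trees $T_4,\ldots,T_{13}$ the same recursion yields monomials of degree five in the initial-data variables, with combinatorial prefactors from $\partial_v^{2}\partial_{\bar v}^{1}p$ or $\partial_v^{1}\partial_{\bar v}^{2}p$, while trees of $\overline{u^1}u^1$ type ($T_{14},\ldots,T_{17}$) are symmetric versions of $T_2, T_3$.

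The discretised iterated integrals are evaluated using \eqref{recursive_pi_r} together with the full Taylor operator $\CK^{k,r}_{o_2}$ from Definition \ref{CK}. The crucial feature is that for each paired forest the phase factors $e^{\pm i\tau k^2}$ attached to the roots cancel by the pairing constraint \eqref{pairRoot}, so that the product $\bar \Pi^{2,1} T \cdot \Pi^{2,1}\tilde T$ reduces to a real polynomial in $\tau$ with polynomial-in-frequency coefficients. At order $\tau$, the only potential contributions come from $F_2, F_3$ (and their conjugates $F_{14}, F_{15}$); but the first-order computations \eqref{NLS_compT2}, \eqref{NLS_compT3} give $\bar \Pi^{2,1}T_1 \cdot \Pi^{2,1} T_j = -i\tau + \mathcal{O}(\tau^2)$ for $j=2,3$, whose real part vanishes, so that taking $2\,\mathrm{Re}$ eliminates the linear-in-$\tau$ term. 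This explains why \eqref{schemeNLS2_Fourier} has no $\tau$ contribution.

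The $\tau^2$ contributions come from two distinct sources. First, from the pairs $F_2, F_3, F_{14}, F_{15}$ one must retain the next Taylor coefficient of $\CK^{k,1}_{o_2}$ applied to the integrand; using the resonance structure \eqref{NonZeroRes} for $F_2$ (and zero resonance \eqref{ZeroRes} for $F_3$), one obtains a real $\tau^{2}$ term proportional to $2(k^{2}-k_{2}^{2})|v_k|^{2}|v_{k_2}|^{2}$, which after summation in $k_2$ and recombination yields the first bracketed term of \eqref{schemeNLS2_Fourier}. Second, from the pairs $F_4,\ldots,F_{13}$ (double-integral trees stemming from $u^{2,1}, u^{2,2}$) and from $F_{16}, F_{17}$ (double pairings inside $\overline{u^1}u^1$), the leading order is $\tau^{2}$ with a trivial frequency coefficient, since in each case at least one of the two nested resonances vanishes by the pairing. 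Collecting these contributions with their multiplicities \eqref{m_F} and the prefactors $\Upsilon^p/S$ yields, after cancellations due to complex conjugation symmetries between $u^{2,1}$ and $u^{2,2}$ pairings, the second bracketed term of \eqref{schemeNLS2_Fourier}. Passing to physical space uses that multiplication of Fourier coefficients corresponds to convolution with $\tilde{u}(x)=\overline{u(-x)}$, so $|v_k|^{2}\leftrightarrow v * \tilde v$, $k^{2}|v_k|^{2}\leftrightarrow \nabla v * \nabla \tilde v$, and $\sum |v_{k_j}|^{2}$ contracts to $\|v\|_{L^2}^{2}$, while the triple convolution sum produces $(v * \tilde v)^{3}$.

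For the local error, I apply Theorem \ref{thm:genloc} and evaluate $\CL^{1}_{\text{\tiny low}}(F_i,2)$ recursively via Definition \ref{def:Llow}, using Lemma \ref{Taylor_bound} with the full-Taylor remainder \eqref{R_fullTaylor}. The worst pair is $F_2$ (and its analogues), where the remainder is $R^{k,1}_{o_2,2} = (k^{2}-k_2^{2})^{2}$; via the same convolution identity this corresponds in physical space to $|\nabla|^{2} v$, one derivative being absorbed by the quadratic structure $v * \tilde v$. All other pairs either have zero resonance (exact integration, no local error) or produce lower-order remainders. The main obstacle is the careful bookkeeping across the seventeen pairs: tracking which contributions cancel upon taking $2\,\mathrm{Re}$ at order $\tau$, and verifying that the remaining order-$\tau^{2}$ contributions from both the single-integral pairs (giving the $|k|^2$-weighted term) and the double-integral pairs (giving the $(v*\tilde v)^{3}$ term) assemble with the correct numerical constants $2$ and $-6$ and the correct relative sign.
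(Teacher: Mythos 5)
Your overall strategy is the paper's: enumerate the seventeen representative paired forests with their multiplicities $m_{F_i}$, compute $\Upsilon^{p}/S$ for each, evaluate $\CQ_{\le 2}\bigl(\bar{\Pi}^{2,1}T\,\Pi^{2,1}\tilde T\bigr)$ via the full Taylor operator $\CK$, take real parts, pass to physical space by convolution, and obtain the local error from Theorem~\ref{thm:genloc} with the worst remainder $R^{k,1}_{o_2,2}=\bigl(2(k^2-k_2^2)\bigr)^2$ coming from $F_2$. However, two of your attributions are wrong and would corrupt the coefficients if carried out as written. The pairs $F_{14}$ and $F_{15}$ are not ``conjugates'' of $F_2,F_3$ and do not belong to the source of the frequency-weighted term: they are $\overline{u^1}u^1$-type pairings, so the product $\bar{\Pi}^{2,1}\tilde T_i\,\Pi^{2,1}T_i$ starts at order $\tau^2$ with the \emph{frequency-independent} value $\tau^2$, the cross term $i\tau^3(k^2-k_2^2)$ being removed by $\CQ_{\le 2}$. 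The entire first bracket $2\tau^2 v_k\overline{v_k}\bigl(k^2\sum_{k_2}|v_{k_2}|^2-\sum_{k_2}k_2^2|v_{k_2}|^2\bigr)$ comes from $F_2$ alone ($F_3$ contributes only the purely imaginary $-i\tau$, killed by $2\,\mathrm{Re}$), while $F_{14},F_{15},F_{16}$ feed the $\bigl(\sum_{k_2}|v_{k_2}|^2\bigr)^2$ coefficient together with $F_4,\ldots,F_{13}$.

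Your justification for the double-integral pairs is also off in two places. For the external pairings $F_8$ and $F_{13}$ \emph{neither} nested resonance vanishes; the leading coefficient is nevertheless $-\tau^2/2$ simply because $\CK^{k,1}_{o_2}$ applied to an integrand of the form $-i\xi e^{i\xi P}$ retains only the $\ell=0$ term at order $\tau^2$, independently of $P$ --- the resonance structure enters only the local error. And the constant $-36=-6\cdot 6$ does not arise from ``cancellations between $u^{2,1}$ and $u^{2,2}$ pairings'': both families contribute with the same sign and add, weighted by their different symmetry factors ($S=2$ for $T_4,\ldots,T_8$ versus $S=4$ for $T_9,\ldots,T_{13}$), and are then partially offset by the positive $+9\tau^2$ contribution of $F_{14},F_{15},F_{16}$. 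These are fixable bookkeeping errors rather than a flaw in the method, but as stated your sketch would not reproduce \eqref{schemeNLS2_Fourier}.
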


\begin{remark}[Stabilisation technique]\label{rem:stabNLS}
We note that in contrast to the first-order scheme \eqref{schemeNLS1} the  second-order scheme \eqref{schemeNLS2} involves spatial derivatives on the numerical solution such as for instance in the term $\nabla u^{\ell} * \nabla \tilde{u}^{\ell}$. These derivatives cause instability in the discretisation. For practical implementations, one needs to stabilize the scheme in order for it to converge.
Different approaches can be taken to stabilize the above scheme, which do not require a Courant-Friedrichs-Lewy (CFL) type condition on the step sizes. Here we propose an approach based on the a posteriori inclusion of well chosen filter functions. We refer to the works of \cite{BBS, AB} for more details regarding these questions of stability and of the suitable choice of filter functions used to yield stable schemes with optimal local error. We also refer to \cite{HLW} for a general introduction to filter function in the case of oscillatory ordinary differential equations.

In this work we introduce the following filter function 
$$
\mathrm{sinc}^2\left(\tau^{\frac{1}{2}} \left \vert \nabla\right \vert  \right) 
$$

which in in Fourier space takes the form,
\begin{equation}\label{psi_filter}
\Psi(i\tau |k|^2) =  \mathrm{sinc}^2\left(\tau^{\frac{1}{2}}|k|  \right) = 
 \frac{1}{(i\tau^{\frac{1}{2}}|k| )^2}\left(e^{i\tau^{\frac{1}{2}}|k|/2}-e^{-i\tau^{\frac{1}{2}}|k|/2}\right)^2.
\end{equation}
In order to stabilise our scheme \eqref{schemeNLS2}  we pre-multiply both critical terms in the second line of \eqref{schemeNLS1_Fourier} with the filter function \eqref{psi_filter} in  the corresponding frequencies $k$ and $k_2$ respectively. This yields the following stabilised version of~\eqref{schemeNLS2_Fourier} 
\begin{equs}\label{schemeNLS2_FourierStab}
\begin{aligned}
& V_{k}^{2,1}(\tau,v) = v_k\overline{v_k} \\
&+ 2\tau^2 v_k \overline{v_k} \left(\Psi(i\tau |k|^2)  |k|^2 \sum_{k_2 \in \Z^d}|v_{k_2}|^2 - \sum_{k_2 \in \Z^d} \Psi(i\tau |k_2|^2)  |k_2|^2 |v_{k_2}|^2\right) + ...
\end{aligned} 
\end{equs}
We have that 
$$\Psi(i\tau |k|^2) = 1 + \mathcal{O}(\tau |k|^2)$$ and hence the stabilised scheme \eqref{schemeNLS2_FourierStab} preserves the low regularity error structure of $ \mathcal{O}(\tau |\nabla|^2 u)$.
This is essential for the local error analysis  of the scheme. Furthermore, thanks to the observation that
$$
 \left \vert \Psi(i\tau |k |^2)  \tau \right\vert k \vert^2 \vert \leq 1\quad \text{for all $k \in \Z^d$}
$$
$\Psi$ renders a stabilised version of the scheme \eqref{schemeNLS2}  in physical space given by:
\begin{equs}
u^{\ell+1} &= u^\ell * \tilde{u}^\ell \\
&-{2\tau}\left(((e^{i \tau^{\frac{1}{2}}|\nabla|/2}-e^{-i\tau^{\frac{1}{2}}|\nabla|/2}) u^\ell * (e^{i\tau^{\frac{1}{2}}|\nabla|/2}-e^{-i\tau^{\frac{1}{2}}|\nabla|/2}) \tilde{u}^\ell)\|u^\ell\|_{L^2}^2\right. \\
&\left. \quad - (u^\ell * \tilde{u}^{\ell}) \|(e^{i\tau^{\frac{1}{2}}|\nabla|/2}-e^{-i \tau^{\frac{1}{2}}|\nabla|/2}) u^\ell\|_{L^2}^2\right)\\
&- 6 \tau^2\left( 6 (u^\ell*\tilde{u}^\ell)\|u^\ell\|_{L^2}^4 - (u^\ell*\tilde{u}^\ell)^3\right).
\end{equs}
\end{remark}

\begin{remark}[Practical implementation]
In order to allow for a practical implementation of our schemes we want to be able to express the discretization both  in physical and Fourier space. This allows us to use the Fast Fourier Transform (FFT) whose computational cost is of order $O(|K|^d log(|K|^d))$, where $K$ denotes the highest frequency in the discretization and $d$ is the dimension. Namely, we compute the action of the filter functions on the solution  in frequency space, while computing the product of functions in physical space.
\end{remark}

\begin{remark} In \eqref{schemeNLS2}, the scheme $ V_k^{n,r} $ is written in physical space. In fact, one can wonder if it is possible to get a general statement such as \cite[Prop. 3.18]{BS} which shows that low regualrity schemes can always be rewritten in physical space. It is not clear how to prove such a statement in full generality when exact integrations are performed instead of a full Taylor expansion. Indeed, the core of the proof  \cite[Prop. 3.18]{BS} relies on \cite[Assumption 1]{BS} which is no longer true in our case: Frenquencies on the leaves are no longer disjoint.
	We will have to be more cautious in the scheme and probably Taylor expand a bit more as exact integrations will not allow to move back to physical space. It is a challenging open question. In the case of the present work, we are mostly doing Taylor expansions. They guarantee that we are able to go back to physical space.
	\end{remark}

\begin{proof}
We are considering a second order scheme and hence, we enter the case where $r=1$. Furthermore, we ask for two derivatives on the initial data, and hence we take the regularity parameter $n$ to take on the value of $2$.
We read from equation \eqref{schema_general} that the second-order scheme has the general form,
\begin{equs}\label{expansionNLS2} \begin{aligned}
V_{k}^{2,1}(\tau,v) = \sum_{F = T_1\cdot T_2 \in \CG^{1,k}_{0}(R)} m_F &\frac{\bar{\Upsilon}^{p}(T_1)(v) \, \Upsilon^{p}(T_2)(v)}{S(T) S(T_2)}  \\
&\mathcal{Q}_{\le 2} \left((\bar{\Pi}^{2,1}(T_1))(\Pi^{2,1}(T_2)) \right)(\tau),
\end{aligned}
\end{equs}
where $\{m_F\}_{F\in \CG^{1,k}_0(R)}$ are given in \eqref{m_F}, and where in the same spirit of \eqref{upsilon1} we have that for all $i\in \{4,...,13 \}$, 
\begin{align*}
\Upsilon^{p}(T_4)(v) &= 2 \Upsilon^{p}(\CI_{{(\mathfrak{t}_1,1)}}(\lambda_{k_4}))(v)\Upsilon^p( T_3)(v) \Upsilon^p(\CI_{(\mathfrak{t}_1,0)}(\lambda_{k_4}) )(v) \\
&= 2\overline{v_{k_4}}(2\overline{v_{k_2}}v_{k_2}v_k)v_{k_4}\\
&= 4 |v_{k_4}|^2 |v_{k_{2}}|^2v_k\\
&=\Upsilon^p(T_i)(v),
\end{align*}
and for $i\in \{4,..., 8 \}, \  S(T_i) = 2$ while for $i \in \{9, ..., 13 \}, \ S(T_i) = 4$.
Furthermore, we have that for $i\in\{14,...,17 \}$,
\begin{align*}
\bar{\Upsilon}^p(\tilde{T_i})(v)\Upsilon^p(T_i)(v)  = 4 |v_{k_1}|^2|v_{k_2}|^2|v_{k_3}|^2, \quad S(T_i)^2 = 4.
\end{align*}
Hence, the second order scheme \eqref{expansionNLS2} takes the form
\begin{equs}\label{expansionNLS1_2}
\begin{aligned}
& V_{k}^{2,1}(\tau,v) =  v_k\overline{v_k} 
\\ & + 2Re\left(  v_k\overline{v_k}\sum_{k_2\in \Z^d} |v_{k_{2}}|^2 \CQ_{\le 2} \bigg(\bar{\Pi}^{2,1}({T_1})\big(\Pi^{2,1}(T_2) + 2\Pi^{2,1}(T_3) \big)\bigg)(\tau)\right) \\ 
& + 2Re\left( v_k\overline{v_k} \sum_{k_2, k_4 \in \Z^d} |v_{k_2}|^2|v_{k_4}|^2 \right.\\
 &\left.\bigg(\sum_{4\le i \le 13}m_{F_i}\CQ_{\le 2}\left( \bar{\Pi}^{2,1}({T_1})\Pi^{2,1}(T_i)\right)\bigg)(\tau)\right)\\
& + v_k \overline{v_k} \sum_{k_1,k_2 \in \Z^d} |v_{k_1}|^2|v_{k_2}|^2 \bigg(\sum_{14\le i \le 16}m_{F_i}\CQ_{\le 2}\left( \bar{\Pi}^{2,1}(\tilde{T_i})\Pi^{2,1}(T_i)\right)\bigg)(\tau) \\
& + \sum\limits_{\substack{k_1,k_2,k_3 \in \Z^d\\ -k_1 + k_2 + k_3 = k}} |v_{k_1}|^2|v_{k_2}|^2|v_{k_3}|^2 \bigg(m_{F_{17}}\CQ_{\le 2}\left( \bar{\Pi}^{2,1}({T_{17}})\Pi^{2,1}(T_{17})\right)\bigg)(\tau). 
\end{aligned}
\end{equs}
\\
1. Computation of $\CQ_{\le 2}\left(\bar{\Pi}^{2,1}({T_1})\big(\Pi^{2,1}(T_2) + 2\Pi^{2,1}(T_3)\right)$:
We apply Definition \ref{recursive_pi_r} to each of the three decorated trees.
\begin{itemize}
\item Computation of $\bar{\Pi}^{2,1}(T_1)$: Given that \eqref{NLS_T1} is a zero-th order term, as in \eqref{compT1_NLS} we have,
$$
( \bar{\Pi}^{2,1}{T_1})(\tau) = e^{i\tau k^2}.
$$
\item Computation of $\Pi^{2,1}(T_2)$: It follows from the definition \eqref{NLS_T2} and \eqref{NLS_T2sym} of $T_2$ that,
\begin{equs}\label{comp_T2_o2} \begin{aligned}
(\Pi^{2,1} T_2)(\tau) &= e^{-i \tau k^2}  \mathcal{K}_{o_2}^{k,1} \left((\Pi^{2,0}\CI_{\bar{o}_1}(\lambda_{-k}))(\xi) \right. \\& \left. (\Pi^{2,0} \CI_{o_1}(\lambda_{-k_2})(\xi) (\Pi^{2,0}\CI_{o_1}(\lambda_{k_2}))(\xi) \right)(\tau)\\
&= e^{-i \tau k^2}  \mathcal{K}_{o_2}^{k,1}( e^{i\xi(k^2 - 2k_2^2)} ,2)(\tau)\\ 
& = -i\tau e^{-i \tau k^2} + \tau^2 e^{-i \tau k^2}(k^2 - k_2^2),
\end{aligned}
\end{equs}
where we applied Definition \ref{CK} with $\ell=1$. Namely, up to one additional order then made in \eqref{NLS_compT2} for the first order analysis.
\item Computation of $\Pi^{2,1}(T_3)$: As for the calculations made in \eqref{NLS_compT3}, given that the resonance structure \eqref{ZeroRes} of the integrand in $\Pi(T_3)$ is zero it follows that,
\begin{equs}\label{comp_T3_o2}
(\Pi^{2,1} T_3)(\tau) = -i\tau e^{-i \tau k^2}.
\end{equs}
\end{itemize}
Hence, having only at most second order terms in the above calculations, $\CQ_{\le 2}$ does not play a role and we have:
\begin{equs}
\CQ_{\le 2}\left(\bar{\Pi}^{2,1}(T_1)\big(\Pi^{2,1}(T_2) + 2\Pi^{2,1}(T_3)\right)(\tau) & = \bar{\Pi}^{2,1}(T_1)\big(\Pi^{2,1}(T_2) + 2\Pi^{2,1}(T_3))(\tau) \\
&= -3i\tau + \tau^2(k^2 - k_2^2).
\end{equs}
Given that we take the real part of the above approximation, only the second term in the above will contribute to the scheme.\\
2. Computation of $\CQ_{\le 2}\left( \bar{\Pi}^{2,1}({T_1})\Pi^{2,1}(T_i)\right)$, for $i\in \{4,...,13\}$:
\begin{itemize}
\item Computation of $\left( \bar{\Pi}^{2,1}({T_1})\Pi^{2,1}(T_4)\right)$: We recall that $T_4$ is given by \eqref{NLS_T4} and in symbolic notation we have,
\begin{equs}
T_4 = \CI_{o_1}\Big(\lambda_{k}\Big( \CI_{o_2}\left(\lambda_k(\CI_{\bar{o_1}}(\lambda_{k_4})T_3\CI_{o_1}(\lambda_{k_4}))\right) \Big) \Big),
\end{equs}
where $T_3$ is given in \eqref{NLS_T3}.
Hence, it follows that
\begin{equs}
(\Pi^{2,1} T_4)(\tau) &= e^{-i \tau k^2}\mathcal{K}_{o_2}^{k,1}\left( (\Pi^{2,0} \CI_{\bar{o}_1}(\lambda_{k_4}))(\xi)(\Pi^{2,0}T_3)(\xi)(\Pi^{2,0}\CI_{o_1}(\lambda_{k_4}))(\xi) \right)(\tau)\\ 
&= e^{-i \tau k^2}\mathcal{K}_{o_2}^{k,1}\left( -i\xi e^{i\xi (k_{4}^2 - k^2 - k_{4}^2)} \right)(\tau)\\
&= -ie^{-i \tau k^2}  \mathcal{K}_{o_2}^{k,1}( \xi e^{-i\xi k^2} ,2)(\tau)\\
&= -\frac{\tau^2}{2} e^{-i \tau k^2}, 
\end{equs}
where in the second line we used that $(\Pi^{2,0}T_3)(\xi) = -i\xi e^{-i\xi k^2}$, which comes from \eqref{NLS_compT3}, obtained during the first order analysis. We note that when calculating $\CK$ in the last line, the resonance structure is as expected equal to zero.
\item Computation of $\Pi^{2,1}(T_i)$, for $i \in \{5,...,13\}$: These computations follow exactly the same line as in the above case. We have,
$( \Pi^{2,1} T_i)(\tau)= -\frac{\tau^2}{2} e^{-i \tau k^2}$.

The only difference lies in the expression of the resonance factor which will either be of the form \eqref{NonZeroRes}, be equal to zero (as was the case in the above), or finally in the case of an external pairing ($\Pi(T_i)$, $i \in \{ 8,13\}$) of the form \eqref{resDecomp}. Nevertheless, given that we can make the full Taylor approximation \eqref{CK} when approximating the each integral, the approximation of $\Pi^{2,1}(T_i)$, $i \in \{4,...,13\}$ are the same, the difference lies in the local error produced.
\end{itemize}
Therefore, $\CQ_{\le 2}\left( \bar{\Pi}^{2,1}({T_1})\Pi^{2,1}(T_i)\right)(\tau) = -\frac{\tau ^2}{2}$.\\
3. Computation of $\CQ_{\le 2}\left( \bar{\Pi}^{2,1}({T_i})\Pi^{2,1}(T_i)\right)(\tau)$, for $i\in \{14,...,17\}$: 
\begin{itemize}
\item Computation of $\bar{\Pi}^{2,1}({T_{14}})\Pi^{2,1}(T_{14})$. Given that $T_{14}$ has the same null resonance structure as $T_3$, it follows as in \eqref{comp_T3_o2} that $(\Pi^{2,1} T_{14})(\tau) = -i\tau e^{-i \tau k^2}$ and hence,
$$(\bar{\Pi}^{2,1} {T_{14}} \Pi^{2,1} T_{14})(\tau) = \tau^2.$$
For the remaining computations the truncation operator $\CQ_{\le 2}$ will be essential to incorporate the terms of correct order into the scheme. 
\item Computation of $\CQ_{r\le 2}\bar{\Pi}^{2,1}(\tilde{T}_{15})\Pi^{2,1}(T_{15})$, $\CQ_{r\le 2}\bar{\Pi}^{2,1}({\tilde{T}_{16}})\Pi^{2,1}(T_{16})$, \\
 and $\CQ_{r\le 2}\bar{\Pi}^{2,1}(\tilde{T}_{17})\Pi^{2,1}(T_{17})$. First, we have that $\Pi(T_{15}) $ and $\Pi(T_{16})$ have the same resonance structure as $\Pi T_2$, and hence following the calculations in \eqref{comp_T2_o2} it follows that, 
\begin{equs}
(\Pi T_{15})(\tau) = (\Pi T_{16})(\tau) = -i\tau e^{-i \tau k^2} + \tau^2 e^{-i \tau k^2}(k^2 - k_2^2).
\end{equs}
 Furthermore, we have that $\bar{\Pi} (\tilde{T}_{15})$ has a null resonance structure and hence it follows that $(\bar{\Pi} \tilde{T}_{15})(\tau) = i\tau e^{i\tau k^2}$ and,
\begin{equs}
\CQ_{r\le 2} (\bar{\Pi}^{2,1} \tilde{T}_{15} \Pi^{2,1} T_{15})(\tau) &= \CQ_{r\le 2} \left( \tau^2 + i\tau^3(k^2 - k_2^2) \right)\\
&= \tau^2.
\end{equs}
Similarly, we have
\begin{equs}
\CQ_{r\le 2}( \bar{\Pi}^{2,1} \tilde{T}_{16}{\Pi}^{2,1} T_{16})(\tau) = \tau^2 = \CQ_{r\le 2}( \bar{\Pi}^{2,1} \tilde{T}_{17}\Pi^{2,1} T_{17})(\tau),
\end{equs}
 where the third and fourth order terms are truncated by the operator $\CQ_{\le 2}$.
\end{itemize}
Plugging the results obtained in the above computations into \eqref{expansionNLS1_2} yields the second order low regularity scheme \eqref{schemeNLS2_Fourier}, which is given in physical space by \eqref{schemeNLS2}.

{\bf Local Error:} It remains to show the claimed third order local error bound: $\CO(\tau^3|\nabla|^2 v)$. We follow along the same lines made for the first order local error analysis in Section \ref{NLS1}. By Theorem \ref{thm:genloc} we have
\begin{equs}
V^{2,1}_k(\tau,v) - V_k(\tau,v) = \sum_{T_1 \cdot T_2 \in \CG^{1,k}_0(R)}\CO(\tau^3\CL_{\text{\tiny{low}}}^1(T_1\cdot {T_2},2)\bar{\Upsilon}^p(T_1)(v)\Upsilon^p({T_2})(v)).
\end{equs}
We are left to calculate $\CL_{\text{\tiny{low}}}^1(T_1\cdot {T_2},2)$, for $T_1\cdot {T_2}\in \CG^{1,k}_0(R)$. We are interested in the integrals $\Pi T$, $\Pi \tilde{T}$ which have non-zero resonance structure, since oscillatory integrals who's resonance structure is zero are integrated exactly,
and hence do not contribute to the local error analysis of the scheme.
\begin{itemize}
\item Computation of $\CL_{\text{\tiny{low}}}^1({T_1}\cdot T_2,2)$, where $\Pi T_2$ has the non-zero resonance structure \eqref{NonZeroRes}. Following the analysis made in \eqref{L_low2} for this pairing we have,
\begin{equs}
R_{o_2,1}^{k,1}(\Pi^{n=2}G_2) = R_{o_2,2}^{k,1}(e^{i\xi(k^2 - 2k_1^2)}) = \left( 2(k^2 - k_1^2)\right)^2,
\end{equs}
where we applied definition \eqref{R_fullTaylor} up to an additional order. Therefore, it follows from the previously made steps \eqref{L_low2} that
\begin{equs}\label{localerr-NLS1}
\CL_{\text{\tiny{low}}}^1({T_1}\cdot {T_2},2)\bar{\Upsilon}^p({T_1})\Upsilon^p(T_2) = \CO(\sum_{k_1 \in \Z^d} (k^2 - k_1^2)^2\overline{v_{k}}v_{k}|v_{k_{1}}|^2).
\end{equs}
In physical space this yields an error of the form
\begin{equs}\label{order2worst}
\CO\Big(\tau^3 &\left((|\nabla|^2 v *|\nabla|^2 \tilde{v}) \|v\|_{L^2}^2 + \Big.\right.\\
&\left.\Big. (v*\tilde{v})\||\nabla|^2 v\|_{L^2}^2 + (|\nabla| v *|\nabla| \tilde{v}) \||\nabla|v\|_{L^2}^2 \right) \Big). 
\end{equs}
\end{itemize}
The computations for the remaining pairings $F\in \CG^{1,k}_0(R)$ can be made in an analogous fashion and can be shown to produce an error term of the form $O(\tau^3 |\nabla| v)$. In consequence, their approximation requires less regularity on the initial data than that for the above pairing ${T_1} \cdot T_2$. We conclude from the above expression \eqref{order2worst} that the local error in physical space is of the form $\CO (\tau^3 |\nabla|^2v)$, which requires two derivatives on the initial data. 
\end{proof}

\begin{remark}
Recently new low-regularity integrators (\cite{AB-sym}) and resonance-based discretisations (\cite{FMS, MS}) have been introduced, which  preserve better the underlying structure of the solution over long-times, and which exhibits improved error constants at low-regularity. Encapsulating the idea behind the construction of these low-regularity structure-preserving schemes into the general framework presented in this article would be of interest in the future. This could yield better insight on the long-time behaviour of the solution on a numerical level.
\end{remark}

\subsection{KdV}\label{sec:KDV}
Let us next consider the Korteweg--de Vries (KdV) equation
\begin{equs}\label{kdv}
\partial_t u + \partial_x^{3} u = \frac12 \partial_x u^2,\quad (t,x) \in \R \times  \T
\end{equs}
with a random initial value $u(0) =v$ of the form \eqref{ic}. 
The KdV equation \eqref{kdv} fits into the general framework \eqref{dis} with
\begin{equation*}\label{kgrDo}
\begin{aligned}
 \mathcal{L}\left(\nabla \right)  = i \partial_x^3, \quad \alpha = 1,   \quad \text{and}\quad   p(u,\overline u) =  p(u) =  i \frac12 u^2.
 \end{aligned}
\end{equation*} 
One has $ \CL = \lbrace \Labhom_1, \Labhom_2 \rbrace $, $ P_{\Labhom_1} = - \lambda^3 $ and $ P_{\Labhom_2} =  \lambda^3 $ . Then, we denoted by $ \<thick> $ an edge decorated by $ (\Labhom_1,0) $ and by $\<thin>$ an edge decorated by $ (\Labhom_2,0) $.
The general framework~\eqref{general_scheme_2} derived in Section \ref{sec::4} builds the foundation of the  first- and second-order schemes presented below for the KdV equation~\eqref{kdv}.

\begin{corollary}
\label{corKdV} For the KdV equation \eqref{kdv} the general scheme~ \eqref{genscheme} takes at first order the form
\begin{equation}
\begin{aligned}\label{schemeKdV1}
u^{\ell+1} &=  u^{\ell} * \tilde{u}^{\ell}
\end{aligned}
\end{equation}
with a local error  of order $\mathcal{O}\Big(
\tau^2 \partial_x u
\Big)$
and at  second-order
\begin{equation}
\begin{aligned}\label{schemeKdV}
u^{\ell+1} &=  u^{\ell} * \tilde{u}^\ell + \tau^2\partial_x^2 ( u^\ell * \tilde{u}^\ell)^2
\\&  + \frac{\tau^2}{2} \partial_x \left( u^{\ell} * \tilde{u}^{\ell} \right)  (\sum_{{k_1} \in \Z^d} k_1 |u^\ell_{k_1}|^2)   - \frac{\tau^2}{2} \left( \partial_x u^{\ell} * \partial_x \tilde{u}^{\ell} \right)  ((u^{\ell}_0)^2 + (\overline{u^{\ell}_0})^2) 
\end{aligned}
\end{equation}
with a local error  of order $\mathcal{O}\Big(
\tau^3 \partial_x^2 u
\Big)$, and where $ \tilde{u}^{\ell}(x) = \overline{u^{\ell}(-x)} $. 
\end{corollary}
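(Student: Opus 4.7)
The strategy is to specialise the general framework~\eqref{general_scheme_2} together with Theorem~\ref{thm:genloc} to KdV, following the three-step procedure already carried out for NLS in Section~\ref{sec:NLS}: (a) enumerate the decorated trees in $\hat\CT^{r,k}_0(R)$, (b) classify the admissible pairings of $\CG^{r,k}_0(R)$ via Wick's formula (Proposition~\ref{Wick}), and (c) evaluate the characters $\Pi^{n,r}$ using the full Taylor expansion of Definition~\ref{CK}. The decisive structural feature is that $p(u,\bar u) = \tfrac{i}{2} u^2$ depends on $u$ alone ($N=2$, $M=0$), so every leaf carries an edge of type $(\mathfrak{t}_1,0)$ with an un-negated frequency, and the $m$-th Duhamel iterate is polynomial of degree $m+1$ in the Gaussian family $(\eta_k)$. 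Consequently any pair of iterates $T^{(m)}\cdot T^{(n)}$ with $m+n$ odd produces an odd Gaussian moment and vanishes, which eliminates a large number of potential contributions compared with the NLS case.

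For the first-order scheme ($r=0$), the relevant trees are $T_1 = \CI_{(\mathfrak{t}_1,0)}(\lambda_k)$ and the unique first-iterate tree $T_a = \CI_{(\mathfrak{t}_1,0)}\bigl(\lambda_k\CI_{(\mathfrak{t}_2,0)}(\lambda_k\CI_{(\mathfrak{t}_1,0)}(\lambda_{k_1})\CI_{(\mathfrak{t}_1,0)}(\lambda_{k_2}))\bigr)$ with $k_1+k_2 = k$. The parity obstruction rules out $T_1\cdot T_a$ (three leaves, no Wick pairing), while the size bound $n_+ \le r+1 = 1$ rules out $T_a\cdot T_a$. Hence $\CG^{0,k}_0(R) = \{T_1\cdot T_1\}$ with $\Upsilon^p(T_1)(v) = v_k$, $S(T_1)=1$ and $(\bar\Pi^{n,0} T_1)(\tau)(\Pi^{n,0} T_1)(\tau) = 1$, giving $V^{n,0}_k(\tau,v) = |v_k|^2$, i.e.\ $u\ast\tilde u$ in physical space.

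For the second-order scheme ($r=1$), the parity and size constraints leave three families in $\CG^{1,k}_0(R)$: (i) $T_1\cdot T_1$, (ii) $T_a\cdot T_a$ whose four leaves admit several topologically distinct Wick pairings (intra-tree versus cross-tree), and (iii) $T_1\cdot T_b$ together with $T_b\cdot T_1$, where $T_b$ is a second-iterate tree obtained by grafting a copy of $T_a$ in place of one of the two inner leaves of $T_a$ (by symmetry, a single representative with multiplicity $2$). For each representative I would compute $\Upsilon^p(T)$ and $S(T)$ via~\eqref{upsi}--\eqref{S}, evaluate $(\Pi^{n,r}T)(\tau)$ recursively through~\eqref{recursive_pi_r} and~\eqref{exp_CK}, and exploit the KdV resonance identity $k^3 - k_1^3 - k_2^3 = 3kk_1k_2$ (for $k = k_1+k_2$): once the Wick pairing identifies two frequencies (e.g.\ $k_1 = -k_2$ for an intra-tree pair, $k_1 = -k$ for a cross-tree pair), the resonance factor collapses either to zero---allowing exact integration and no local-error contribution---or to a polynomial in a single remaining free frequency that admits a physical-space convolution representation. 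Summing the resulting contributions and keeping track of the combinatorial coefficients $m_F$ reproduces the three non-trivial terms $\partial_x^2(u\ast\tilde u)^2$, $\partial_x(u\ast\tilde u)\sum_{k_1}k_1|u_{k_1}|^2$ and $(\partial_x u\ast\partial_x\tilde u)(u_0^2+\bar u_0^2)$ of~\eqref{schemeKdV}.

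The local error is extracted from Theorem~\ref{thm:genloc} by evaluating $\mathcal{L}^r_{\text{low}}(F,n)$ recursively through Definition~\ref{def:Llow} for each $F\in\CG^{r,k}_0(R)$. Because every Wick pairing identifies two leaf frequencies, the worst-case bound $k^{2\sigma}|v_k|^2$ is in fact the Fourier coefficient of $\partial_x^\sigma u\ast\partial_x^\sigma\tilde u$, which in physical space requires only $\sigma$ derivatives instead of $2\sigma$; combined with the prefactor $|\nabla|^\alpha(k) = k$ inherited from the KdV nonlinearity, this produces the announced bounds $\mathcal{O}(\tau^2\partial_x u)$ at first order and $\mathcal{O}(\tau^3\partial_x^2 u)$ at second order. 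The main obstacle, as in the NLS computation, will be the combinatorial bookkeeping of group (ii) at second order: identifying which pairings of the four leaves of $T_a\cdot T_a$ yield identical contributions under the symmetry of $T_a$, computing the correct multiplicities $m_F$ from Wick's formula, and verifying that each surviving Fourier-space expression lifts to a convolution representation compatible with the FFT-based implementation embedded in~\eqref{schemeKdV}.
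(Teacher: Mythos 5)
Your overall strategy is the paper's: specialise \eqref{general_scheme_2} and Theorem~\ref{thm:genloc} to KdV, use the parity of Wick moments to kill the pairings of $\bar u^0_k$ with $u^1_k$, evaluate $\Pi^{n,r}$ by full Taylor expansion, and exploit the identity $k^3-k_1^3-k_2^3=3kk_1k_2$ together with the convolution trick $k^{2\sigma}|v_k|^2 \equiv \partial_x^\sigma v * \partial_x^\sigma\tilde v$. The first-order scheme and the second-order tree enumeration are essentially as in the paper. There are, however, two concrete gaps.

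First, your account of the first-order local error is wrong at the source. At $r=0$ the only paired forest is $T_0\cdot T_0$, which contains no time integral, so the sum $\sum_F \CO(\tau^2\CL^0_{\text{\tiny{low}}}(F,n)\cdots)$ in Theorem~\ref{thm:genloc} contributes no powers of $k$ at all: there is nothing to discretise, hence no approximation error of type \eqref{app2}. The bound $\CO(\tau^2\partial_x u)$ comes entirely from the series-truncation term $\CO(\tau^{r+2}|\nabla|^{\alpha(r+2)}(k)\,\mathbb{E}(\tilde p_k))$ of \eqref{app1}, with $\alpha=1$ and $r=0$ giving $k^{2}$, which the pairing turns into $\partial_x v*\partial_x\tilde v$. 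The paper explicitly singles out KdV at order one as the counter-example where \eqref{app1} dominates \eqref{app2}; your mechanism (evaluating $\CL^0_{\text{\tiny{low}}}$ and multiplying by ``$|\nabla|^\alpha(k)=k$'') gets both the origin and the exponent of the frequency factor wrong, even though it lands on the correct final bound.

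Second, at second order your class (iii) undercounts. The Wick pairings of $\bar u^0_k$ with the second iterate split into \emph{two} inequivalent representatives, not one: the paper's $T_2$ (inner leaves paired intra-tree as $k_1,-k_1$, outer leaf paired across to $T_0$, multiplicity $1$) and $T_3$ (the cross pairing forcing two leaves to carry frequency $0$, multiplicity $2$). These produce the two genuinely different terms $\frac{\tau^2}{2}\partial_x(u*\tilde u)\sum_{k_1}k_1|u_{k_1}|^2$ and $-\frac{\tau^2}{2}(\partial_x u*\partial_x\tilde u)\,(u_0^2+\bar u_0^2)$ in \eqref{schemeKdV}; a single representative with multiplicity $2$ would merge or lose one of them. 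You correctly flag the combinatorial bookkeeping as the remaining work, but as stated the enumeration would not reproduce the scheme.
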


\begin{remark}[Stability] In view of a practical implementation of the second order scheme \eqref{schemeKdV} for KdV, we  address the need to stabilize the three terms in the above scheme involving spatial derivatives on the numerical solution.
As in the Remark \ref{rem:stabNLS}, for the stabilisation of the second order scheme for NLS, we introduce three filter functions which we premultiply in front of each of the last three critical term appearing in the scheme \eqref{schemeKdV}. We introduce the following three filter functions in Fourier space,
\begin{align*}
&\Psi_1(k) = \varphi_1(-i\tau |k|^2) = \frac{1 - e^{-i\tau |k|^2}}{i\tau k^2}, \\
&\Psi_2(k,k_1) = 
 \frac{1}{(i\tau^{\frac{1}{2}}|k| )(i\tau^{\frac{1}{2}}|k_1| )}\left(e^{i\tau^{\frac{1}{2}}|k|/2}-e^{-i\tau^{\frac{1}{2}}|k|/2}\right)\left(e^{i\tau^{\frac{1}{2}}|k_1|/2}-e^{-i\tau^{\frac{1}{2}}|k_1|/2}\right) , \\
&\Psi_3(k) = \text{sinc}^2(\tau^{\frac{1}{2}}|k|), 
\end{align*}
where $\Psi_3(k)$ is the same filter function used for the stabilization of the second order scheme for NLS (see equation \eqref{psi_filter}). 
We have that
\begin{align*}
& \Psi_j(k) =1 + \CO(\tau |k|^2), \quad |\Psi_j (k) \tau |k|^2|\le 1, \ j \in \{1,3 \} , \ k \in \Z, \\
& \Psi_2(k, k_1) = 1+ \CO(\tau (|k|^2 + |k_1|^2)), \quad |\Psi_2 (k,k_1) \tau |k| |k_1| |\le 1, \  k,k_1 \in \Z.
\end{align*}
Namely, these three filter functions preserve the low regularity error structure of $\CO(\tau \partial_x^2 u)$, and provides the following stabilized version of the scheme \eqref{schemeKdV} which is given in physical space by,
\begin{align*}
u^{\ell+1} &=  u^{\ell} * \tilde{u}^\ell -i\tau (e^{i\tau \partial_x^2} - 1) ( u^\ell * \tilde{u}^\ell)^2 \\
& - \frac{\tau}{2} \Big((e^{\tau^{\frac{1}{2}}\partial_x/2}-e^{-\tau^{\frac{1}{2}}\partial_x/2}) ( u^{\ell} * \tilde{u}^{\ell})\Big)  \Big(\sum_{{k_1} \in \Z^d}(e^{\tau^{\frac{1}{2}}|k_1|/2}-e^{-\tau^{\frac{1}{2}}|k_1|/2}) |u^\ell_{k_1}|^2\Big)   \\
&+ \frac{\tau}{2} \left( (e^{\tau^{\frac{1}{2}}\partial_x/2}-e^{-\tau^{\frac{1}{2}}\partial_x/2}) u^{\ell} * (e^{\tau^{\frac{1}{2}}\partial_x/2}-e^{-\tau^{\frac{1}{2}}\partial_x/2}) \tilde{u}^{\ell} \right)  ((u^{\ell}_0)^2 + (\overline{u^{\ell}_0})^2). 
\end{align*}
\end{remark}

\begin{proof}
We proceed as for the Nonlinear Schrödinger equation. The construction of the schemes is again based on the general framework~\eqref{genscheme}. We consider for $r = 0,1$ the expansion,
\begin{equs}\label{kdvU}
V_{k}^{n,r}(\tau, v) & = \sum_{F = T_1 \cdot T_2 \in \CG^{r,k}_0(R)} m_F \frac{\bar{\Upsilon}^{p}(T_1)(v) \, \Upsilon^{p}(T_2)(v)}{S(T_1) S(T_2)} \\ & \mathcal{Q}_{\leq r +1} \left( \bar{\Pi}^{n,r} T_1 \Pi^{n,r} T_2 \right)(\tau) .
\end{equs} 
For the first-order scheme ($r=0$) we need only to consider the single paired decorated forest:
\begin{equs}\label{pair_KDVo1}
\CG^{0,k}_0(R) = \lbrace  T_0 \cdot T_0 \rbrace, \quad 
T_0 = \begin{tikzpicture}[scale=0.2,baseline=-5]
\coordinate (root) at (0,1);
\coordinate (tri) at (0,-1);
\draw[kernels2] (tri) -- (root);
\node[var] (rootnode) at (root) {\tiny{$ k $}};
\node[not] (trinode) at (tri) {};
\end{tikzpicture}.
 \end{equs}
Indeed, we recall the first order expansion \eqref{o1Exp}, where for the KdV equation we have,
\begin{align*}
u^0(\tau,v^{\eta}) = e^{- \tau \partial_x^3 } v^{\eta}, \quad u^1(\tau,v^{\eta})= e^{-\tau\partial_x^3}\int_0^{\tau} e^{\zeta \partial_x^3}\partial_x (e^{-\zeta \partial_x^3}v^{\eta})^2 d\zeta.
\end{align*}
Given that for the KdV equation the term $  {\mathbb{E}}[\overline{u^0_k(\tau,v^{\eta})}u^1_k(\tau,v^{\eta})]$ consists of a product of three Gaussians, we have that $  {\mathbb{E}}[\overline{u^0_k(\tau,v^{\eta})}u^1_k(\tau,v^{\eta})]$ is equal to zero. 
Hence, the only term to compute in the expansion \eqref{o1Exp} is
 $$  {\mathbb{E}}[\overline{u^0_k(\tau,v^{\eta})}u^0_k(\tau,v^{\eta})],$$ and the pairing to consider is \eqref{pair_KDVo1}.

A straightforward calculation yields,
\begin{equs}
S(T_0) & = 1, \quad 
\Upsilon^{p}(T_0)(v)  = v_{k}, \quad m_{T_0 \cdot T_0} = 1,
\end{equs}
and
\begin{equs}
(\Pi^{n,0} T_0)(\tau) = e^{-i \tau k^{3}}.
\end{equs}
In the end, the first order scheme takes on the simple form:
\begin{equs}
V_{k}^{n,0}(\tau, v) = |v_k|^2. 
\end{equs}
One can notice that the only approximation error made comes from the first-order truncation of the tree series \eqref{general_scheme_2}. Namely, by noting that $\alpha = 1$, it follows from \eqref{app1} that $(V_k^{n,0} - V_k)(\tau,v) = \CO(\tau^2 k^2 v_k \bar{v}_k)$. Therefore, in physical space the local error is of order $\CO(\tau^2(\partial_x v * \partial_x \tilde{v}) )$, which solely requires one additional derivative on the initial data.

For a second order scheme, one has to compute more terms. The set of paired decorated forests is given by:
\begin{equs}
\CG^{1,k}_0(R) & = \lbrace  T_0 \cdot T_0,  T_1 \cdot T_1,  T_0 \cdot T_2, T_0 \cdot T_3, \, \, k_i \in \Z^{d} \rbrace, 
\\  T_1 &  =   \begin{tikzpicture}[scale=0.2,baseline=-5]
\coordinate (root) at (0,2);
\coordinate (tri) at (0,0);
\coordinate (tri1) at (0,-2);
\coordinate (t1) at (-1,4);
\coordinate (t2) at (1,4);
\draw[kernels2] (t1) -- (root);
\draw[kernels2] (t2) -- (root);
\draw[kernels2] (tri) -- (tri1);
\draw[symbols] (root) -- (tri);
\node[not] (rootnode) at (root) {};
\node[not] (trinode) at (tri) {};
\node[not] (trinode) at (tri1) {};
\node[var] (rootnode) at (t1) {\tiny{$ k_1 $}};
\node[var] (trinode) at (t2) {\tiny{$ k_2 $}};
\end{tikzpicture}, \quad T_2 = \begin{tikzpicture}[scale=0.2,baseline=-5]
\coordinate (root) at (0,2);
\coordinate (tri1) at (0,-2);
\coordinate (tri) at (0,0);
\coordinate (t1) at (-1,4);
\coordinate (t11) at (-2,6);
\coordinate (t12) at (-3,8);
\coordinate (t13) at (-1,8);
\coordinate (t2) at (1,4);
\draw[kernels2] (t11) -- (t13);
\draw[kernels2] (t11) -- (t12);
\draw[kernels2] (t1) -- (root);
\draw[kernels2] (tri) -- (tri1);
\draw[symbols] (t1) -- (t11);
\draw[kernels2] (t2) -- (root);
\draw[symbols] (root) -- (tri);
\node[not] (rootnode) at (root) {};
\node[not] (trinode) at (tri) {};
\node[not] (trinode) at (tri1) {};
\node[not] (trinode) at (t1) {};
\node[var] (rootnode) at (t12) {\tiny{$ k_{\tiny{1}} $}};
\node[var] (rootnode) at (t13) {\tiny{$ \bar{k}_{\tiny{1}} $}};
\node[var] (trinode) at (t2) {\tiny{$ k $}};
\end{tikzpicture}, \quad T_3  = \begin{tikzpicture}[scale=0.2,baseline=-5]
\coordinate (root) at (0,2);
\coordinate (tri1) at (0,-2);
\coordinate (tri) at (0,0);
\coordinate (t1) at (-1,4);
\coordinate (t11) at (-2,6);
\coordinate (t12) at (-3,8);
\coordinate (t13) at (-1,8);
\coordinate (t2) at (1,4);
\draw[kernels2] (t11) -- (t13);
\draw[kernels2] (t11) -- (t12);
\draw[kernels2] (t1) -- (root);
\draw[kernels2] (tri) -- (tri1);
\draw[symbols] (t1) -- (t11);
\draw[kernels2] (t2) -- (root);
\draw[symbols] (root) -- (tri);
\node[not] (rootnode) at (root) {};
\node[not] (trinode) at (tri) {};
\node[not] (trinode) at (tri1) {};
\node[not] (trinode) at (t1) {};
\node[var] (rootnode) at (t12) {\tiny{$ k $}};
\node[var] (rootnode) at (t13) {\tiny{$ 0 $}};
\node[var] (trinode) at (t2) {\tiny{$ 0 $}};
\end{tikzpicture}.
\end{equs} 
A straightforward computation gives:
\begin{equs}
 S(T_1)& = 2, \quad S(T_2) = S(T_3)= 2, \quad  
\Upsilon^{p}(T_1)(v)  =  v_{k_1} v_{k_2},  \\ \Upsilon^{p}(T_2)(v)&  =  |v_{k_1}|^2 v_{k}, \quad \Upsilon^{p}(T_3)(v)  =  v^2_{0} v_{k}
\\ m_{T_1 \cdot T_1} & = 2, \quad m_{T_0 \cdot T_2} = 1, \quad m_{T_0 \cdot T_3} = 2.
\end{equs}
Further, due to cancellations in the resonance, the integrals encoded by the trees $T_2$ and $T_3$ have a null resonance structure, and hence it follows that
\begin{equs}
(\Pi^{n,1} T_0)(\tau) & = e^{-i \tau k^{3}},  \quad 
(\Pi^{n,1} T_2)(\tau)  = - \frac{\tau^2}{2} k_1 k  e^{-i \tau k^{3}}, \\ (\Pi^{n,1} T_3)(\tau) & = - \frac{\tau^2}{2} k^2 e^{-i \tau k^{3}}.
\end{equs}
 For the computation of $ (\Pi^{n,1} T_1)(\tau) $, we proceed by a full Taylor expansion which yields
\begin{equs}
 \left( \Pi^{n,1} T_1  \right)(\tau)
 & =  e^{-i \tau k^3} \mathcal{K}_{(\Labhom_2,1)}^{k,1} \left( e^{i \xi (-k_1^3 - k_2^3)} ,n\right)(\tau)
 \\ & = e^{-i \tau k^3} \left( i \tau k  + \frac{3}{2} \tau^2 k_1 k_2 k^2 \right),
\end{equs}
where we used the fact that the resonance structure is given by,
\begin{equs}
 P_{(\Labhom_2,0)}(k) - k_1^3-k_2^3  =  k^3 - k_1^3 - k_2^3 = 3 k_1 k_2 (k_1+k_2) = 3k_1k_2k.
\end{equs}
Then, by taking the product and making the truncation, we obtain:
\begin{equs}
\CQ_{\leq 2} \left( \bar{\Pi}^{n,1} T_1  \right)(\tau) \left( \Pi^{n,1} T_1  \right)(\tau) & = \CQ_{\leq 2} \left((-i \tau k  +\frac{3}{2} \tau^2 k_1 k_2 k^2) (i \tau k  +\frac{3}{2} \tau^2 k_1 k_2 k^2) \right)
\\ & =  \tau^2 k^2.
\end{equs}
The local error introduced is of order $ \mathcal{O}(\tau^{3} k^4) $ which corresponds to two derivatives due to the convolution structure. In the end, the scheme in Fourier space is given by:
\begin{equs}
V_{k}^{n,1}(\tau, v)&  = |v_k|^2 + \tau^2 k^2 \sum_{k = k_1 + k_2}  |v_{k_1}|^2 |v_{k_2}|^2   
\\ & - \sum_{k_1 \in \Z^d} \frac{\tau^2}{2} k_1 k |v_{k_1}|^2 |v_k|^2 -  \frac{\tau^2}{2} k^2 |v_k|^2 (v_0^2 + \overline{v_0}^2).
\end{equs}
By writing the above scheme in physical space we recover the scheme \eqref{schemeKdV}, which induces a local error of $\CO(\tau^3 \partial_x^2 v)$.
\end{proof}

\appendix
\endappendix

\end{document}